% this is, as of December 11th 2019
 
% the final version of the paper entitled

% A model-theoretic approach to rigidity of strongly ergodic, distal actions

\documentclass[a4paper,oneside]{amsart}
\usepackage[utf8]{inputenc}

\title[Strongly ergodic, distal actions and model theory]{A model-theoretic approach to rigidity of strongly ergodic, distal actions}
%%% Titre en français : %%%
% Une approche modèle-théorique de la rigidité des actions distales fortement ergodiques

%%% Titre court : %%%
% Actions distales fortement ergodiques et théorie des modèles
%%%%%%%%%%%%%%%%%%%%%%%%%%%

\author{Tom{\'a}s Ibarluc{\'\i}a}
\address{
  Institut de Math{\'e}matiques de Jussieu--PRG \\
  Universit\'e Paris Diderot, case 7012 \\
  8, place Aur{\'e}lie Nemours \\
  75205 Paris \textsc{cedex} 13 \\
  France}
%\email{ibarlucia@math.univ-paris-diderot.fr}
\urladdr{https://webusers.imj-prg.fr/~tomas.ibarlucia}

\author{Todor Tsankov}
\address{
  Institut de Math{\'e}matiques de Jussieu--PRG \\
  Universit\'e Paris Diderot \\
  75205 Paris \textsc{cedex} 13 \\
  France
  -- and --
  D{\'e}partement de math{\'e}matiques et applications \\
  {\'E}cole normale sup{\'e}rieure \\
  75005 Paris \\
  France}
\curraddr{Institut Camille Jordan \\
  Universit\'e Claude Bernard Lyon 1 \\
  43, boulevard du 11 novembre 1918 \\
  69622 Villeurbanne \textsc{cedex} \\
  France}

\email{tsankov@math.univ-lyon1.fr}

\date{\today}

\subjclass[2010]{Primary 28D15, 03C98}
\keywords{distal actions, generalized discrete spectrum, strongly ergodic, continuous logic, model theory, algebraic closure, property (T)}
%%% Mots clés en français : %%%
% actions distales, spectre discret généralisé, fortement ergodique, logique continue, théorie des modèles, clôture algébrique, propriété (T)
%%%%%%%%%%%%%%%%%%%%%%%%%%%

\usepackage[T1]{fontenc}
\usepackage[osf]{mathpazo}
\usepackage{amssymb}

\usepackage{eucal}

\usepackage{microtype}

\usepackage{hyperref}
\usepackage[bottom=3.5cm]{geometry}

\usepackage{enumitem}
\setlist[enumerate,1]{label=(\roman*), font=\normalfont}

\usepackage[initials, shortalphabetic]{amsrefs}

\usepackage{my-macros}

\newcommand{\FPMP}{\mathrm{FPMP}}
\newcommand{\PMP}{\mathrm{PMP}}
\newcommand{\APA}{\mathrm{APA}}
\newcommand{\meq}{\mathrm{meq}}

\DeclareMathOperator{\rk}{rk}
\DeclareMathOperator{\Cb}{Cb}
\DeclareMathOperator{\alg}{alg}

\DeclareMathOperator{\Eacl}{\acl_\exists}

\begin{document}

\begin{abstract}
  We develop a model-theoretic framework for the study of distal factors of strongly ergodic, measure-preserving dynamical systems of countable groups. Our main result is that all such factors are contained in the (existential) algebraic closure of the empty set. This allows us to recover some rigidity results of Ioana and Tucker-Drob as well as prove some new ones: for example, that strongly ergodic, distal systems are coalescent and that every two such systems that are weakly equivalent are isomorphic. We also prove the existence of a universal distal, ergodic system that contains any other distal, ergodic system of the group as a factor.
\end{abstract}
%%% Résumé en français : %%%
% Nous développons un cadre modèle-théorique pour l'étude de facteurs distaux d'actions fortement ergodiques de groupes dénombrables. Notre résultat principal est que lesdits facteurs sont contenus dans la clôture algébrique (existentielle) du vide. Cela nous permet de récupérer un théorème de rigidité de Ioana et Tucker-Drob ainsi que de démontrer de nouveaux résultats : par exemple, que les systèmes distaux fortement ergodiques sont coalescents et que si deux systèmes de ce type sont faiblement équivalents, alors ils sont isomorphes. Nous prouvons également l'existence d'un système distal ergodique universel qui contient tout système distal ergodique du groupe comme facteur.
%%%%%%%%%%%%%%%%%%%%%%%%%%%

\maketitle
\setcounter{tocdepth}{1}
\tableofcontents

\section{Introduction}
\label{sec:introduction}

The theory of \df{compact} measure-preserving dynamical systems was initiated by Halmos and von Neumann, who characterized the ergodic systems of the group of integers $\Z$ that can be represented as translations on a compact group in terms of their spectrum. This was extended by Mackey~\cite{Mackey1964} to general locally compact groups. Later, inspired by the work of Furstenberg in topological dynamics, Furstenberg~\cite{Furstenberg1977} and Zimmer~\cite{Zimmer1976, Zimmer1976a} considered the relative notion of a \df{compact extension} of a system and defined a \df{distal} system to be one obtained via a transfinite tower of compact extensions, starting from the trivial system. The notion of a distal system was central to Furstenberg's ergodic-theoretic proof of Szemerédi's theorem, and led to the \emph{Furstenberg--Zimmer structure theorem} for general ergodic actions of locally compact groups. In the literature, compact systems are often referred to as \df{isometric} or having \df{discrete} (or \df{pure point}) \df{spectrum}. Similarly, distal systems are also known as systems with \df{generalized discrete spectrum}. In this paper, we will use the terminology ``compact'' and ``distal''.

The notion of \df{weak containment} of measure-preserving systems of countable groups was introduced by Kechris~\cite{Kechris2010} as a weakening of the notion of a \df{factor}: a system $\cX$ is \df{weakly contained} in another system $\cY$ (notation: $\cX <_w \cY$) if $\cX$ is a factor of an ultrapower of $\cY$. Two systems $\cX$ and $\cY$ are \df{weakly equivalent} if $\cX <_w \cY$ and $\cY <_w \cX$. It turns out that weak equivalence is a \df{smooth} equivalence relation (the set of equivalence classes is compact \cite{Abert2011p}) and that many notions in ergodic theory are invariants of weak equivalence. Free actions of infinite amenable groups are all weakly equivalent, but non-amenable groups usually admit uncountably many classes of weak equivalence~\cite{Bowen2018}.

By a result of Tucker-Drob~\cite{Tucker-Drob2015a}, weak equivalence classes of free actions always contain uncountably many isomorphism classes (and they are not even classifiable by countable structures). This is why rigidity results that allow to recover the isomorphism class from the weak equivalence class of a system in special situations are particularly interesting. Two rigidity results about weak equivalence have appeared in the literature. The first one, due to Abért and Elek~\cite{Abert2012a}, states that if a \df{profinite} system $\cX$ is weakly contained in a strongly ergodic system $\cY$, then $\cX$ is a factor of $\cY$. This was then generalized by Ioana and Tucker-Drob~\cite{Ioana2016} to distal systems. From this, they were able to conclude that for \emph{compact}, strongly ergodic systems, weak equivalence implies isomorphism, because compact ergodic systems are \df{coalescent}, i.e., every endomorphism of the system is an automorphism. However, this is not true in general for distal systems (see Parry and Walters~\cite{Parry1970}). Nevertheless it turns out to be true for strongly ergodic, distal systems and we were able to prove the following.

\begin{theorem}
  \label{th:i:distal-rigidity}
  Let $\Gamma$ be a countable group.
  \begin{enumerate}
  \item Let $\cX$ be a distal, strongly ergodic, probability measure-preserving $\Gamma$-system. Then $\cX$ is coalescent and $\Aut(\Gamma\actson\cX)$ is compact.
  \item Let $\cX$ and $\cY$ be two distal, strongly ergodic, probability measure-preserving $\Gamma$-systems. If they are weakly equivalent, then they are isomorphic.
  \end{enumerate}
\end{theorem}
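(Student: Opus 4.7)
The plan is to invoke the main theorem of the paper, which asserts that a distal, strongly ergodic $\Gamma$-system $\cX$ coincides with its own existential algebraic closure over the empty set, $\Eacl(\emptyset)$, in the appropriate continuous-logic framework for probability measure-preserving $\Gamma$-systems. From there, everything will follow from general model-theoretic properties, provided we also have two ``bridge'' facts that one expects the paper to establish: first, that weak containment of $\Gamma$-systems corresponds to inclusion of existential theories (being a factor of an ultrapower gives precisely an existential embedding); second, that the topology on $\Aut(\Gamma \actson \cX)$ relevant to compactness is the usual pointwise convergence topology on the measure algebra.

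For (i), compactness of $\Aut(\Gamma\actson\cX)$ should be almost immediate: every $a \in \cX$ has a compact orbit under $\Aut(\Gamma\actson\cX)$, because $a \in \Eacl(\emptyset)$ means that the set of realizations of the existential type of $a$ is compact, and the orbit of $a$ is contained in this set. A pointwise-precompact closed subgroup of the automorphism group of a separable metric structure is compact, giving the conclusion. For coalescence, a system endomorphism $\pi \colon \cX \to \cX$ is an embedding of the underlying continuous-logic structure and hence preserves existential formulas. Thus, for each $a \in \cX$, the iterates $\pi^n(a)$ all lie inside the compact type-class of $a$. Combined with the fact that $\pi$ is measure-preserving, a compactness/iteration argument should show that $\pi$ is surjective, hence an automorphism.

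For (ii), weak equivalence of $\cX$ and $\cY$ implies via the bridge fact above that they share the same existential theory. Since both systems equal $\Eacl(\emptyset)$ relative to this common theory, and an existentially algebraically closed structure generated by $\emptyset$ is uniquely determined up to isomorphism by its existential theory (a prime-model-style property in the existential setting, proved by a standard back-and-forth between the compact type-classes), we conclude that $\cX \cong \cY$. The main obstacle I expect is the coalescence half of (i): passing from the abstract statement ``$\pi$ maps every type-class into itself compactly'' to genuine surjectivity of $\pi$ as a map of measure algebras. This likely requires marrying the topological action of $\pi$ on each existential type space with the measure-preserving constraint—perhaps by showing that the decreasing sequence $\pi^n(\cX)$ must stabilize, or by extracting a limit point from a net of iterates of $\pi$ inside the compact group $\Aut(\Gamma\actson\cX)$ already obtained in the first half.
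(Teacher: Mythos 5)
Your overall strategy matches the paper's: both proofs hinge on the fact that a strongly ergodic, distal system equals its own (existential) algebraic closure of the empty set, which the paper establishes (in an even sharper, quantifier-free form) as Theorem~\ref{th:main-lemma}. Your argument for compactness of $\Aut(\Gamma\actson\cX)$ is essentially the paper's: orbits lie in compact type-classes, so $\Aut(\Gamma\actson\cX)$ is a closed subgroup with precompact orbits and hence compact (the separability you invoke is Corollary~\ref{c:distal-is-in-Xalg}, itself a by-product of the same $\acl$ theorem).

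Where you diverge — and where your proposal has a genuine gap — is coalescence. You correctly flag this as the main obstacle but neither of your suggested fixes is quite right. In particular, the idea of ``extracting a limit point from a net of iterates of $\pi$ inside the compact group $\Aut(\Gamma\actson\cX)$'' does not work as stated, because $\pi$ is not yet known to be an automorphism, so its iterates do not live in that group. (One \emph{can} make a version of this work by arguing in the compact \emph{semigroup} of endomorphisms, as the paper notes in Remark~\ref{rem:Ioana}, but that requires an extra step and is not what you wrote.) The paper's actual argument is much more economical: if $\phi$ is a non-surjective self-embedding, pick $f$ with $\epsilon = d\bigl(f, \phi(L^\infty(\cX))\bigr) > 0$; since $\phi$ is an isometric embedding preserving the quantifier-free type over $\cZ$, the iterates $\phi^n(f)$ form a sequence inside the quantifier-free type-class of $f$ that is pairwise $\epsilon$-separated (as $d(\phi^i(f),\phi^j(f)) = d(f,\phi^{j-i}(f)) \geq \epsilon$ for $i<j$), contradicting compactness of that class. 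This is the crucial one-line lemma your proposal is missing; note also that it genuinely uses the quantifier-free strengthening $\equiv_\cZ$ and not merely $\Eacl$.

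For part (ii), your route (a direct back-and-forth / prime-model-style uniqueness argument inside $\Eacl(\emptyset)$) is heavier machinery than the paper uses and you leave the key lemma unproved. The paper instead observes that weak containment of a distal $\cX$ in a strongly ergodic $\cY$ gives a genuine embedding $\cX \hookrightarrow \cY$ (Corollary~\ref{c:full-Ioana-TD}: embed $\cX$ into an elementary extension $\cY'$, land in $\acl^{\cY'}(\emptyset)=\acl^{\cY}(\emptyset)\subseteq\cY$). Symmetry gives bi-embeddability, and composing the two embeddings gives a self-embedding of $\cY$, which by coalescence is an isomorphism. This sidesteps the back-and-forth entirely and is the cleaner deduction once coalescence is in hand.
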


In the course of the proof of Theorem~\ref{th:i:distal-rigidity}, we also give a new proof of the rigidity result of Ioana and Tucker-Drob mentioned above. The methods we use come from continuous logic, a relatively new branch of model theory, suitable for studying metric structures. While some model-theoretic concepts, such as ultraproducts, have been used before in ergodic theory, to our knowledge, this is the first application of continuous logic to dynamics of countable groups.

The main model-theoretic notion behind the proof is that of \df{algebraic closure}: if $M$ is a metric structure, $A \sub M$, and $b \in M$, we say that $b$ is in the \df{algebraic closure} of $A$ (notation: $b \in \acl^M(A)$) if it belongs to a compact set definable over $A$. Our main model-theoretic result can be stated as follows.
\begin{theorem}
  \label{th:i:acl}
  Let $\cX$ be a strongly ergodic system, $\cZ$ be a factor of $\cX$ and $\cY$ be a distal extension of $\cZ$ in $\cX$. Then $\cY \sub \acl^\cX(\cZ)$.
\end{theorem}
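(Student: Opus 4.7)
The plan is to proceed by transfinite induction on the distal rank of the extension $\cZ \subseteq \cY$. By the Furstenberg--Zimmer structure theorem, a distal extension is built up by a transfinite tower of compact extensions, starting from $\cZ$. Since algebraic closure is transitive ($\acl^\cX(\acl^\cX(A)) = \acl^\cX(A)$) and respects taking closures of increasing unions, the limit stages are automatic and the whole task reduces to the following: \emph{if $\cY$ is a compact extension of $\cZ$ inside the strongly ergodic $\cX$, then $\cY \subseteq \acl^\cX(\cZ)$}. The base case is trivial since $\cZ \subseteq \acl^\cX(\cZ)$.

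So fix such a compact extension and let $f \in L^2(\cY)$. By the standard (Furstenberg--Zimmer--Mackey) characterization of compact extensions, there exist $f_1 = f, f_2, \ldots, f_n \in L^2(\cY)$ and a unitary cocycle $c \colon \Gamma \to U_n(L^\infty(\cZ))$ with $U_\gamma f_i = \sum_j c_{ij}(\gamma)\, f_j$ for every $\gamma \in \Gamma$, and one may normalize so that the Gram matrix $\mbE[f_i \bar f_j \mid \cZ]$ is the identity. Now consider
\[
D := \bigl\{(g_1, \ldots, g_n) \in L^2(\cX)^n : \mbE[g_i \bar g_j \mid \cZ] = \delta_{ij},\ U_\gamma g_i \in \mathrm{span}_{L^\infty(\cZ)}(g_1, \ldots, g_n)\ \forall \gamma, i\bigr\}.
\]
This set is the zero set of a definable predicate over $\cZ$ (the equalities and the distance-to-span conditions are $\cZ$-formulas, and $\Gamma$-invariance is an intersection over $\gamma \in \Gamma$), and by construction $(f_1, \ldots, f_n) \in D$. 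Thus, if $D$ can be shown to be (pre)compact in $L^2(\cX)^n$, then $f \in \acl^\cX(\cZ)$ and the theorem follows.

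The main obstacle is establishing compactness of $D$, and this is precisely where the strong ergodicity of $\cX$ must enter. If $D$ were not precompact one could extract an infinite $\varepsilon$-separated family $\bar g^{(1)}, \bar g^{(2)}, \ldots \in D$. Each $\bar g^{(k)}$ carries its own unitary cocycle $c^{(k)}$ over $\cZ$, so the relative pairings $\mbE[g_i^{(k)} \bar g_j^{(\ell)} \mid \cZ]$ transform in a prescribed way under $\Gamma$; by averaging these pairings across $k,\ell$ one should manufacture nontrivial asymptotically $\Gamma$-invariant elements of $L^2(\cX)$ that stay bounded away from genuinely $\Gamma$-invariant ones, contradicting strong ergodicity. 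This is the relative analogue of the Ioana--Tucker-Drob rigidity argument, but phrased model-theoretically as the definable compactness of a $\cZ$-definable set rather than as an extension of morphisms. Executing this step cleanly---in particular handling the fact that $\Gamma$ acts on the $L^\infty(\cZ)$-coefficients, so that a \emph{relative} form of strong ergodicity over $\cZ$ is what is really being used---is the technical crux of the theorem.
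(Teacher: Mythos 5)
Your high-level plan — reduce by transitivity of $\acl$ to the case of a single compact extension, then exhibit a compact $\cZ$-definable set containing $f$ — is exactly the alternative route the paper sketches in Remark~\ref{rem:after-main-lemma}, so the architecture is sound. However, the ``technical crux'' you leave open is handled quite differently than you anticipate, and your definition of $D$ has a subtle but serious flaw.

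First, the set $D$ you define requires only that $(g_1,\dots,g_n)$ be $\cZ$-orthonormal and that the $L^\infty(\cZ)$-span of $(g_1,\dots,g_n)$ be $\Gamma$-invariant; it does \emph{not} fix the cocycle. As written, $D$ contains a $\cZ$-basis of \emph{every} $n$-generated $\Gamma$-invariant $\cZ$-module in $L^2(\cX)$, for arbitrary cocycles. Distinct such modules can be mutually orthogonal, so there is no reason for $D$ to be precompact unless one has already shown there are only ``compactly many'' such modules — which is essentially the conclusion you are trying to prove, not an available input. The fix is to pin down the cocycle: require $\ip{\gamma g_i, g_j}_\cZ = c_{ij}(\gamma)$ for the specific $c$ coming from $\bar f$ (equivalently, $m_{\bar g} = m_{\bar f}$ in the paper's notation). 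With that added, compactness is no longer the crux: Lemma~\ref{l:dim-at-most-n} shows by a short linear-algebra argument that the set of such tuples spans a subspace of dimension $\leq n$, and since all members are $\cZ$-orthonormal (hence norm-bounded), the set is precompact. No averaging, no ``asymptotically invariant'' construction is needed. Your sketch of manufacturing almost-invariant vectors from the pairings $\mbE[g^{(k)}_i \bar g^{(\ell)}_j\mid\cZ]$ is where you were headed, but without a fixed cocycle these pairings are not conjugation-invariant and the averaging you allude to does not obviously produce anything.

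Second, and more importantly, you misidentify where strong ergodicity enters. The compactness of the fixed-cocycle $D$ requires only that the \emph{ambient} system be ergodic (so that the $\Gamma$-invariant function $x\mapsto\ip{E(x),E'(x)}$ is constant). But algebraicity must be witnessed in a saturated elementary extension $\cX'$ of $\cX$, not in $\cX$ itself: the paper's proof of Theorem~\ref{th:distal-strongly-erg-acl} explicitly passes to such an $\cX'$ and uses the equivalence $f\in\acl^\cX(\cZ)$ iff the set of realizations of $\tp^{\cX'}(f/\cZ)$ in $\cX'$ is compact. The role of strong ergodicity is precisely to guarantee that $\cX'$ remains ergodic, so that the linear-algebra argument applies \emph{there}. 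If $\cX$ were merely ergodic, $\cX'$ would not be, and the whole argument collapses. This is a genuinely different use of the hypothesis than the one you envisioned.

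Finally, note that the paper's primary proof takes neither route. It proves the stronger Theorem~\ref{th:main-lemma} (compactness of the set of $g\in L^\infty(\cX)$ with $g\equiv_\cZ f_0$, i.e.\ with the same \emph{quantifier-free} type) by a purely cardinality-based argument: using the a priori bound on the density of any ergodic distal extension of $\cZ$ (Theorem~\ref{th:distal-god}), and the fact that a $\kappa$-separated family with the same quantifier-free type over $\cZ$ would generate, inside an ergodic elementary extension, a distal extension of $\cZ$ of density $\geq\kappa$, a contradiction. That proof makes no use of the explicit module structure or cocycle, and avoids both the transfinite induction and the pointwise linear algebra. Your approach, repaired as above, is closer in spirit to the proof suggested in Remark~\ref{rem:after-main-lemma}.
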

A slight strengthening of Theorem~\ref{th:i:acl} (replacing $\acl$ by $\Eacl$) easily implies Theorem~\ref{th:i:distal-rigidity}. We provide two rather different proofs of Theorem~\ref{th:i:acl}, one based on the theory of compact extensions, and another using model-theoretic stability theory.

Another feature of our approach is that it relies, in part, on the existence of certain canonical universal systems. Those systems are usually non-separable, i.e., they cannot be realized on a standard probability space. Working on a standard space is an assumption often made in the literature, either out of habit, or for the more essential reason that many important results fail in the non-separable setting (we give several examples in this paper). We depart from this tradition and \emph{make no separability assumptions throughout the paper unless specifically stated}. A posteriori, somewhat surprisingly, it turns out that  all strongly ergodic, distal systems are separable (Corollary~\ref{c:distal-is-in-Xalg}).

It is possible to carry out the stability-theoretic proof presented in Section~\ref{sec:two-line-proof} by only using ergodic theoretic results in a separable situation.

The following is our main result regarding universal systems.
\begin{theorem}
  \label{th:i:universal-distal}
  Let $\Gamma$ be a countable group and $\alpha$ be an ordinal. Then there exists a unique ergodic $\Gamma$-system $\cD_\alpha(\Gamma)$ of distal rank at most $\alpha$ which contains every ergodic $\Gamma$-system of distal rank at most $\alpha$ as a factor. Moreover, $\cD_{\omega_1}(\Gamma) = \cD_{\omega_1+1}(\Gamma)$, that is, $\cD_{\omega_1}(\Gamma)$ is the universal ergodic, distal $\Gamma$-system.
\end{theorem}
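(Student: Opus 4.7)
The plan is to build $\cD_\alpha(\Gamma)$ by transfinite recursion on $\alpha$, simultaneously producing a coherent system of factor maps $\pi^\alpha_\beta \colon \cD_\alpha(\Gamma) \to \cD_\beta(\Gamma)$ for $\beta \leq \alpha$. Set $\cD_0(\Gamma)$ to be the trivial one-point system. At a limit ordinal $\lambda$, take $\cD_\lambda(\Gamma) = \varprojlim_{\beta < \lambda} \cD_\beta(\Gamma)$ along the already constructed factor maps; this inverse limit is ergodic (corresponding to the increasing union of ergodic factor algebras) and has distal rank at most $\lambda$ by construction.

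The main work is at the successor step. To construct $\cD_{\alpha+1}(\Gamma)$ from $\cD_\alpha(\Gamma)$, I would first verify that, up to isomorphism over $\cD_\alpha$, the class of ergodic compact extensions of $\cD_\alpha(\Gamma)$ forms a \emph{set}. Any such extension is encoded by a cocycle from $\Gamma \times \cD_\alpha$ into the unitary group of a separable Hilbert space --- the countability of $\Gamma$ forces the relevant fiber to be separable --- which gives a cardinality bound in terms of the density character of $\cD_\alpha$. Enumerate representatives $(\cY_i)_{i \in I}$ and form the relatively independent product over $\cD_\alpha$ of the family $(\cY_i)_{i \in I}$. This product is still a compact extension of $\cD_\alpha$, since products of compact fibers are compact. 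It is not ergodic in general, but any ergodic component will serve as $\cD_{\alpha+1}(\Gamma)$: because the pushforward of the relatively independent product measure to each $\cY_i$ is the distinguished invariant measure on $\cY_i$, and ergodic measures are extreme in the simplex of invariant measures, almost every ergodic component still projects onto each $\cY_i$ in a measure-preserving way. Hence $\cD_{\alpha+1}$ factors onto every ergodic compact extension of $\cD_\alpha$, and by the inductive hypothesis onto every ergodic system of distal rank $\leq \alpha + 1$.

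For the stabilization $\cD_{\omega_1}(\Gamma) = \cD_{\omega_1+1}(\Gamma)$, it suffices to show that every ergodic distal $\Gamma$-system $\cY$ is already a factor of $\cD_{\omega_1}(\Gamma)$. A separable distal system has countable distal rank, because its tower of compact extensions gives a strictly increasing chain of closed subspaces of the separable Hilbert space $L^2(\cY)$, which must stabilize below $\omega_1$. Thus every separable ergodic distal system is a factor of $\cD_\beta(\Gamma)$ for some $\beta < \omega_1$. A general ergodic distal system $\cY$ is an inverse limit of its separable ergodic distal subfactors via an $L^2$-exhaustion, and the coherent factor maps into each separable piece assemble into a single factor map $\cD_{\omega_1}(\Gamma) \to \cY$. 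Uniqueness at each $\alpha$ follows from the universal property together with the concreteness of the construction: two universal objects admit mutual factor maps, and the composition, being an endomorphism of a minimal universal object, must be the identity up to measure isomorphism.

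The principal obstacle is the successor step: in particular, (a) the set-size bound on the class of ergodic compact extensions of the potentially non-separable base $\cD_\alpha$, and (b) verifying that a single ergodic component of the enormous relatively independent product still surjects onto every individual factor $\cY_i$. The limit-ordinal and stabilization arguments are more routine but rely crucially on reducing to the separable case via $L^2$-exhaustion and on the coherence of the factor maps constructed along the tower.
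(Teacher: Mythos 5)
Your overall strategy (transfinite recursion, cardinality bound on ergodic compact extensions at each stage, stabilization via a rank bound) parallels the paper's, but there are two genuine gaps in the execution.

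First, the successor step. You form the relatively independent product of \emph{all} representatives $(\cY_i)_{i \in I}$ over $\cD_\alpha$ and pass to an ergodic component. The trouble you flag as obstacle (b) is real: the argument that ``almost every ergodic component projects measure-preservingly onto each $\cY_i$'' requires intersecting $|I|$-many conull sets in the disintegration, and $I$ is in general uncountable; moreover, for a non-separable base $\cD_\alpha$, the ergodic decomposition is itself not available in the usual Borel form. The paper sidesteps this entirely: rather than one huge product, it builds a transfinite \emph{chain} $(\cX'_\alpha)_{\alpha \leq \kappa}$ in which each $\cX'_{\alpha+1}$ is a single ergodic joining of $\cX'_\alpha$ with the next representative over $\cZ$ (Propositions \ref{p:ergodic-joinings} and \ref{p:factor-generated-by-compact-factors}, Lemma \ref{l:compact-ext-prop}), and takes direct limits of measure algebras at limit stages, where ergodicity passes through trivially. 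This replaces the problematic ``simultaneous'' ergodic component by a sequence of two-at-a-time amalgamations, each of which is an honest Krein--Milman extreme point argument.

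Second, uniqueness. You write that the composite endomorphism of a ``minimal universal object'' must be the identity up to isomorphism, but this is precisely the coalescence property, and it is \emph{false} for ergodic distal systems in general: the paper cites Parry--Walters for a non-coalescent ergodic distal $\Z$-system of rank $2$. So bi-embeddability does not yield isomorphism for free, and there is no minimality to invoke; the universal object is characterized by a maximality property. The paper proves coalescence of $\widehat{\cZ}$ by a cardinality obstruction (a non-surjective self-embedding would let one build a chain of compact ergodic extensions of unbounded density, contradicting Proposition \ref{p:comp-ext-size-bound}), and then propagates coalescence up the tower by transfinite induction in Theorem \ref{th:universal-distal-systems}. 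Some argument of this kind is indispensable; without it, uniqueness does not follow. Your stabilization argument for $\cD_{\omega_1} = \cD_{\omega_1+1}$ is essentially sound in spirit (and your countable-rank observation for separable systems is Corollary \ref{c:bound-on-distal-rank} for $\kappa=\aleph_0$), though the ``assemble coherent factor maps into one'' step again needs care; the paper avoids it by proving a direct rank bound $\rk(\cX/\cZ) \leq (|\cZ|+\aleph_0)^+$ for arbitrary (not necessarily separable) distal extensions.
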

An analogous result bounding the rank of a \emph{topological} distal minimal system for $\Gamma = \Z$ was proved by Beleznay and Foreman~\cite{Beleznay1995}.

For $\alpha = 1$, the existence of a universal system is well-known: the maximal ergodic, compact system of $\Gamma$ is a translation on a compact group known as the \df{Bohr compactification of $\Gamma$}. For many groups (for example, $\Z$), the Bohr compactification is not metrizable, i.e., the corresponding dynamical system is not separable. However, it is a result of Wang~\cite{Wang1975} that if $\Gamma$ has property (T), then its Bohr compactification is a metrizable group. We recover this result as a consequence of Theorem~\ref{th:i:acl} and prove something more: the hierarchy of distal systems for property (T) groups collapses.
\begin{theorem}
  \label{th:i:propT}
  Let $\Gamma$ have property (T) and $\cZ$ be an ergodic $\Gamma$-system. Then every ergodic, distal extension of $\cZ$ is a compact extension.
\end{theorem}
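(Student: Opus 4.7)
The plan is to combine Theorem~\ref{th:i:acl} with the Connes--Weiss/Schmidt characterization of property (T): a countable group $\Gamma$ has Kazhdan's property (T) if and only if every ergodic probability measure-preserving $\Gamma$-system is strongly ergodic. This reduces the task to showing that an ergodic distal extension $\pi: \cY \to \cZ$ with $\cY$ strongly ergodic is automatically a compact extension.

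Given such a $\pi$, property (T) guarantees that $\cY$ itself is strongly ergodic. Applying Theorem~\ref{th:i:acl} with $\cX = \cY$ --- noting that $\cY$ is trivially a distal extension of $\cZ$ within itself --- we obtain $\cY \subseteq \acl^\cY(\cZ)$. So every element of $\cY$ lies in some compact set definable over $\cZ$.

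The final step is to identify this model-theoretic conclusion with the ergodic-theoretic statement that $\cY$ is a compact extension of $\cZ$. In the paper's continuous-logic framework for pmp systems, a compact $\cZ$-definable subset of $\cY$ should correspond exactly to a $\cZ$-relatively precompact family of $L^2(\cY)$ functions; hence $\cY \subseteq \acl^\cY(\cZ)$ means that $L^2(\cY)$ is generated over $L^\infty(\cZ)$ by functions with $\cZ$-conditionally precompact $\Gamma$-orbits, which is precisely the Furstenberg--Zimmer definition of a compact extension.

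The main obstacle is this last identification. While morally clear, and presumably established in the preliminaries together with the compact-extension-based proof of Theorem~\ref{th:i:acl} mentioned in the introduction, it requires careful handling of the correspondence between definable compact sets in the metric structure associated to $\cY$ and Hilbert-module-theoretic compactness relative to $\cZ$. Once this dictionary is in place, Theorem~\ref{th:i:propT} is an immediate corollary of Theorem~\ref{th:i:acl} and the Connes--Weiss/Schmidt characterization, with no additional input specific to property (T) beyond upgrading ergodicity to strong ergodicity.
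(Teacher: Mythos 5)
Your proposed reduction contains a critical gap that is not a matter of ``filling in a dictionary'' but an actual false step: the implication ``$\cY \subseteq \acl^\cY(\cZ)$ for strongly ergodic $\cY$ $\Longrightarrow$ $\cY$ is a compact extension of $\cZ$'' is simply not true. The algebraic closure in the sense of Theorem~\ref{th:i:acl} (equivalently Theorem~\ref{th:distal-strongly-erg-acl}) captures the entire distal tower over $\cZ$, not just its first floor. Concretely, Glasner and Weiss~\cite{Glasner2018p} construct strongly ergodic, distal, non-compact systems $\cX$ for free groups; taking $\cZ$ trivial, Theorem~\ref{th:distal-strongly-erg-acl} gives $\cX \subseteq \acl^\cX(\emptyset)$ for such $\cX$, yet $\cX$ is not compact. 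So the equivalence you want cannot hold. The root of the confusion is that being in $\acl^\cY(\cZ)$ says that the set of \emph{realizations of $\tp(f/\cZ)$} is compact, whereas compactness of the extension requires conditional precompactness of the \emph{$\Gamma$-orbit} of $f$ over $\cZ$, and these two sets are not the same (note $\tp(\gamma f/\cZ) \neq \tp(f/\cZ)$ in general, since $\gamma$ moves the parameters in $\cZ$). Your closing remark that the argument needs ``no additional input specific to property (T) beyond upgrading ergodicity to strong ergodicity'' is precisely the warning sign: if true, it would prove that every strongly ergodic distal system is compact, which is false.

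What the paper actually does (Theorem~\ref{th:propT}) is quite different and uses property (T) in an essential, additional way: strong ergodicity is applied not only to the given extension but to the \emph{universal} two-step distal extension $\cD_2(\cZ)$, which by Corollary~\ref{c:Aut(distal)-is-compact} makes the relative automorphism group $\Aut_\cZ(\cD_2(\cZ))$ compact. Combined with the group-extension structure $\cD_1(\cZ) \cong Z \times_\rho K$ and surjectivity of the restriction map from $\Aut_\cZ(\cD_2(\cZ))$ to $\Aut_\cZ(\cD_1(\cZ))$ (Corollary~\ref{c:autZ-to-authatZ}), one deduces that $\cZ \subseteq \cD_2(\cZ)$ is a group extension by a compact group. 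The argument then invokes the classical fact (Glasner~\cite{Glasner2003}*{Theorems~3.29 and~9.14}) that such group extensions are compact extensions, giving $\cD_2(\cZ) = \cD_1(\cZ)$ and hence the collapse of the whole distal tower. This ergodic-theoretic input about group extensions has no analogue in your argument, and it is exactly the step that separates ``compact'' from ``distal.''
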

This theorem had been previously proved by Chifan and Peterson (unpublished) but our proof is different and independent from theirs. We would like to thank them for telling us about their result.

For $\Gamma = \Z$, Beleznay and Foreman~\cite{Beleznay1996} have proved that there exist separable ergodic, distal systems of rank $\alpha$ for every $\alpha < \omega_1$. This implies that the tower of universal distal systems $(\cD_\alpha(\Z))_{\alpha \leq \omega_1}$ is proper.

\begin{question}
  \label{q:rank-distal}
  Let $\Gamma$ be a countable group and suppose that $\cD_2(\Gamma)$ is not strongly ergodic. Is it true that $\cD_\alpha(\Gamma) \subsetneq \cD_{\alpha+1}(\Gamma)$ for all $\alpha < \omega_1$?
\end{question}
In Theorem~\ref{th:propT} we prove that if $\cD_2(\Gamma)$ is strongly ergodic, then every distal system of $\Gamma$ is compact, so the condition that $\cD_2(\Gamma)$ is not strongly ergodic is necessary.

Until recently, it was an open question whether strongly ergodic, distal, non-compact systems exist. It was resolved by Glasner and Weiss~\cite{Glasner2018p} who, using results of Bourgain and Gamburd on compact systems with spectral gap, constructed examples for $\Gamma$ a free group. 

Finally, we would like to mention that the assumption that $\Gamma$ is countable is made throughout the paper for simplicity. All results with appropriate modifications (for example replacing $\omega_1$ by $|\Gamma|^+$) hold for arbitrary \emph{discrete} groups $\Gamma$. On the other hand, for the moment, our methods do not allow us to treat non-discrete, locally compact groups.

The paper is organized as follows. In Section~\ref{sec:model-theor-fram}, we describe how to treat probability measure-preserving dynamical systems in a model-theoretic framework and study the connection between weak containment and the existential theory. In Section~\ref{sec:basic-facts-cpct-exts}, we develop some of the theory of compact extensions without separability assumptions. In Section~\ref{sec:universal-distal-systems}, we consider universal systems and prove Theorem~\ref{th:i:universal-distal}. In Sections~\ref{sec:comp-extens-algebr} and \ref{sec:two-line-proof} we give two proofs of Theorem~\ref{th:i:acl} and derive Theorem~\ref{th:i:distal-rigidity} from it. Finally, in Section~\ref{sec:groups-with-property-T}, we consider groups with property~(T). In an appendix, we include a proof of the fact that the distal rank does not increase on factors.

\subsection*{Acknowledgments}
We are grateful to Matt Foreman, Eli Glasner, François Le Maître and Robin Tucker-Drob for useful discussions. Research was partially supported by the ANR projects AGRUME (ANR-17-CE40-0026) and GAMME (ANR-14-CE25-0004).

%%%%%%%%%%%%%%%%%%%%%%%%%%%%%%%%%%%%%%%%%%%%%%%%%%

\section{A model-theoretic framework for ergodic theory}
\label{sec:model-theor-fram}

\subsection{The language and the basic theory}
\label{sec:lang-basic-theory}

In this section, we explain how to formalize probability measure-preserving (pmp) group actions within the framework of continuous logic. The structures of interest are measure-preserving group actions $\Gamma \actson (X, \cX, \mu)$ where $\Gamma$ is a discrete group and $(X, \cX, \mu)$ is a probability space. As continuous logic deals with metric structures, it is natural to consider the measure algebra $\MALG(X, \cX, \mu)$ obtained by taking the quotient of $\cX$ by the equivalence relation of having symmetric difference of measure $0$. We will abuse notation and write again $\cX$ for this quotient. It becomes naturally a complete metric space if endowed with the metric $d_\mu(a, b) = \mu(a \sdiff b)$. By duality, a measure-preserving action $\Gamma \actson (X, \mu)$ is nothing but an action $\Gamma \actson \cX$ by isometries that also preserves the algebra structure.

The signature that we consider will consist of the language of Boolean algebras $\set{\cap, \cup, \sminus, \emptyset, X}$ (here $\sminus$ is set difference and $X$ stands for the maximum element of the Boolean algebra), a $[0, 1]$-valued predicate for the measure $\mu$, and a unary function symbol for every element $\gamma \in \Gamma$.

For a metric structure $\cX$, it is natural to measure its size using the \df{density character} (or just \df{density}), i.e., the smallest cardinal of a dense subset of $\cX$. We will denote the density of $\cX$ by $|\cX|$. Note that if $\cX$ is an infinite measure algebra, then $|\cX| = |L^2(\cX)|$ and $|\cX| \leq \aleph_0$ iff $\cX$ is isomorphic to the measure algebra of a \df{standard probability space}, i.e., one that can be realized by a Borel measure on the interval $[0, 1]$.

\begin{remark}
  \label{rem:Linfty}
  Sometimes it will be useful to consider the richer von Neumann algebra $L^\infty(X, \cX, \mu)$. It is a metric structure in the language that includes sorts for all $\nm{\cdot}_\infty$-balls and symbols for addition, multiplication, the adjoint, and the $L^2$-norm (which defines the metric). Note that the $\nm{\cdot}_\infty$-norm is \emph{not} part of the language. It follows from \cite{BenYaacov2013a} that the structures $\cX$ and $L^\infty(X, \cX, \mu)$ are bi-interpretable, so we can use them interchangeably. We will denote the latter structure simply by $L^\infty(\cX)$. (Many alternative but equivalent presentations of the structure $L^\infty(\cX)$ are possible; in fact, the language chosen in \cite{BenYaacov2013a} is quite different from the one proposed here, for instance in that it is one-sorted---only $[0,1]$-valued functions are considered---and that the $L^1$-norm is used instead of the $L^2$-norm.)
\end{remark}

\begin{defn}
  \label{df:MPA_Gamma}
The theory $\PMP_\Gamma$ includes the following axioms:
\begin{itemize}
\item The universe is a probability measure algebra (see \cite{BenYaacov2008}*{Section~16} for the precise axioms).
\item Each $\gamma$ is an automorphism.
\item For every tuple $\gamma_1, \gamma_2, \ldots, \gamma_n$ of elements of $\Gamma$ such that $\gamma_1 \gamma_2 \cdots \gamma_n = 1_\Gamma$,
  % the axiom $\sup_a d_\mu\bigl(\gamma_1 \cdots \gamma_n a, a \bigr) = 0$.
  the automorphism $\gamma_1 \gamma_2 \cdots \gamma_n$ is equal to the identity.
\end{itemize}
\end{defn}

\begin{remark}
  \label{rem:only-generators}
  If $\Gamma$ is generated by a subset $S \sub \Gamma$, then we can include in the language only symbols for the elements of $S$ and replace the third group of axioms by the relations that define the group. For example, if $\Gamma = \bF_n$ is the free group on $n$ generators, we can include in the language $n$ function symbols and the third group of axioms will be empty.
\end{remark}

The models of $\PMP_\Gamma$ are exactly the pmp actions of $\Gamma$. If $\cX \models \PMP_\Gamma$ and $\cY$ is a substructure of $\cX$, then $\cY$ is a closed, $\Gamma$-invariant subalgebra of $\cX$, or, which is the same, a \df{factor} of $\cX$. We will use the words \emph{substructure} and \emph{factor} interchangeably and we will use the notation $\cY \sub \cX$. Dually, if $\iota \colon \cY \to \cX$ is an embedding, there exist spatial representations $(X, \mu)$ and $(Y, \nu)$ of $\cX$ and $\cY$ and a $\Gamma$-equivariant, measurable map $\pi \colon X \to Y$ such that $\pi_* \mu = \nu$ and $\iota(a) = \pi^{-1}(a)$ for all $a \in \cY$. The fact that substructures of models of $\PMP_\Gamma$ are again models implies that $\PMP_\Gamma$ admits a \df{universal axiomatization} (that is, with axioms of the form $\sup_{\bar a} \phi(\bar a) \leq 0$, where $\phi$ is quantifier-free), something that can also be verified directly (if we work only with symbols for a generating set $S$, as in Remark~\ref{rem:only-generators}, then $S$ should be symmetric).

Often one is interested in \df{free actions}, i.e., actions for which non-identity elements of the group do not fix any points in $X$.
\begin{defn}
  \label{df:FREE_Gamma}
  The theory $\FPMP_\Gamma$ includes all axioms of $\PMP_\Gamma$ and, in addition, for every element $\gamma \in \Gamma$ of finite order $n$, the axiom
  \begin{itemize}
  \item $\inf_{\set{a : \mu(a) = 1/n}} \max_{i < j < n} \mu(\gamma^i a \cap \gamma^j a) = 0$,
  \end{itemize}
  and for every element $\gamma \in \Gamma$ of infinite order, an axiom of the previous form for every $n> 1$.
\end{defn}
It is clear that any action satisfying those axioms is free and, conversely, any free action satisfies the axioms (this follows from Rokhlin's lemma for $\gamma$ of infinite order). If $\Gamma$ is infinite, the freeness of the action implies that the measure space is non-atomic, so separable models of $\FPMP_\Gamma$ are, up to isomorphism, exactly the free, measure-preserving actions $\Gamma \actson ([0, 1], \lambda)$, where $\lambda$ is the Lebesgue measure.

Let us mention a few known facts about the model theory of $\FPMP_\Gamma$. If $\Gamma$ is finite, we need to add non-atomicity as an axiom but once we do, the theory that we obtain is \emph{$\omega$-categorical}, i.e., it has a unique separable model up to isomorphism. Indeed, given $\Gamma\actson X_1$ and $\Gamma\actson X_2$, we may choose fundamental domains $A_i\subseteq X_i$ (i.e., measurable subsets meeting every orbit in exactly one point), then extend any measure-preserving isomorphism $A_1\simeq A_2$ to an equivariant pmp isomorphism $X_1\simeq X_2$. Conversely, if $\Gamma$ is infinite, the theory $\FPMP_\Gamma$ is not $\omega$-categorical.

If the group $\Gamma$ is infinite and amenable, $\FPMP_\Gamma$ is a complete, stable theory that eliminates quantifiers. This was proved in \cite{BenYaacov2008} for $\Gamma = \Z$ and was generalized in \cite{Berenstein2018p} to all amenable~$\Gamma$.

\subsection{The existential theory and weak containment}
\label{sec:exist-theory-weak}

The notion of \df{weak containment} for measure-preserving actions was introduced by Kechris~\cite{Kechris2010}, who was motivated by a similar notion for unitary representations. This notion has a strong model-theoretic flavor and it turns out that it can be expressed naturally in terms of the existential theories of the actions.

\begin{defn}
  \label{df:wcon}
  Let $\Gamma$ be a discrete group and let $\cX$ and $\cY$ be two measure-preserving actions of $\Gamma$. We say that $\cX$ is \df{weakly contained} in $\cY$ (notation: $\cX <_w \cY$) if for every $\epsilon>0$ and finitely many $a_0,\dots,a_{n-1}\in\cX$ and $\gamma_0,\dots,\gamma_{k-1} \in \Gamma$ there are $b_0,\dots,b_{n-1} \in \cY$ such that $|\mu(a_i\cap\gamma_l a_j)-\mu(b_i\cap\gamma_l b_j)|\leq\epsilon$ for every $i,j<n$ and $l<k$.
\end{defn}
In the literature, the symbol $\prec$ is often used to represent weak containment; however, this conflicts with the well-established notation for an elementary substructure in model theory.

Recall that an \df{$\inf$-sentence} is a sentence of the form $\inf_{\bar a} \phi(\bar a)$, where $\phi$ is quantifier-free. The \emph{existential theory} of a structure $M$, denoted by $\Th_\exists(M)$, is given by the set of statements of the form $\phi \leq 0$ true in $M$, where $\phi$ is an $\inf$-sentence. It is clear that if $\cX$ and $\cY$ are $\Gamma$-systems with $\Th_\exists(\cX) \sub \Th_\exists(\cY)$, then $\cX <_w \cY$. The converse is also easy to see. On the other hand, by the compactness theorem for continuous logic, the condition $\Th_\exists(M) \sub \Th_\exists(N)$ is equivalent to saying that $M$ embeds in an elementary extension of $N$. We note these observations down; the equivalence between the first and third item below was first proved by Conley, Kechris and Tucker-Drob in \cite{Conley2013a} (see also Carderi~\cite{Carderi2015}).

\begin{lemma}
  \label{l:wcon-existential}
  Let $\cX$ and $\cY$ be two pmp\ actions of $\Gamma$. Then the following are equivalent:
  \begin{enumerate}
  \item $\cX$ is weakly contained in $\cY$.
  \item $\Th_\exists(\cX) \sub \Th_\exists(\cY)$ or, which is the same, $\cY \models \Th_\exists(\cX)$.
  \item There is an elementary extension (equivalently, an ultrapower) $\cY'$ of $\cY$ such that $\cX\subseteq\cY'$.
  \end{enumerate}
\end{lemma}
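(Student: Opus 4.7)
The plan is to prove the cycle $(3) \Rightarrow (2) \Rightarrow (1) \Rightarrow (3)$. The implication $(3) \Rightarrow (2)$ is immediate: quantifier-free formulas are absolute between substructures, so for $\bar a \in \cX \sub \cY'$ one has $\phi^{\cY'}(\bar a) = \phi^{\cX}(\bar a)$, whence $\inf_{\bar x}\phi(\bar x) \le r$ is inherited from $\cX$ by $\cY'$ and then by $\cY$ since $\cY \prec \cY'$. For $(2) \Rightarrow (1)$, given $a_0, \ldots, a_{n-1} \in \cX$, $\gamma_0, \ldots, \gamma_{k-1} \in \Gamma$, and $\epsilon > 0$, set $c_{ijl} = \mu(a_i \cap \gamma_l a_j)$ and consider the quantifier-free formula
\[
\phi(\bar x) = \max_{i, j < n,\, l < k} \bigl|\mu(x_i \cap \gamma_l x_j) - c_{ijl}\bigr|.
\]
Since $\phi(\bar a) = 0$, the sentence $\inf_{\bar x}\phi(\bar x) \le 0$ lies in $\Th_\exists(\cX) \sub \Th_\exists(\cY)$, producing $\bar b \in \cY$ with $\phi(\bar b) \le \epsilon$, which is exactly the condition of Definition~\ref{df:wcon}.

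The substantial direction is $(1) \Rightarrow (3)$. I would use compactness: it suffices to show that the theory $T := \Th(\cY) \cup \Diag_{\mathrm{qf}}(\cX)$, formulated in the language of $\PMP_\Gamma$ expanded by a fresh constant $\tilde a$ for each $a \in \cX$, is satisfiable; any model of $T$ is an elementary extension of $\cY$ in which $\cX$ embeds as a substructure. A finite fragment of $T$ reduces to finitely many approximate equalities $|\mu(\tau_j(\tilde{\bar a})) - \mu(\tau_j(\bar a))| \le \eta$, where the $\tau_j$ are Boolean terms using variables from a finite tuple $\bar a \in \cX$ and group elements from a finite set $F \sub \Gamma$. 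Realising the fragment in $\cY$ is precisely the task of finding $\bar b \in \cY$ whose joint distribution on $\{\gamma b_i : i < n,\, \gamma \in F\}$ matches that of $(\gamma a_i)$ up to $\eta$. The whole problem therefore reduces to upgrading the \emph{pairwise} approximations of Definition~\ref{df:wcon} to approximations of arbitrary Boolean combinations — this is the main obstacle.

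I would carry out this upgrade by enlarging the tuple with the atoms of a finite subalgebra. Put $\Omega = \{+, -\}^{n \times F}$ and, for each $\omega \in \Omega$, let $c_\omega = \bigcap_{i < n,\, \gamma \in F}(\gamma a_i)^{\omega(i, \gamma)}$ (where $A^+ = A$ and $A^- = X \sminus A$); these are the atoms of the Boolean subalgebra generated by $\{\gamma a_i\}$. Apply~(1) to the extended tuple $(\bar a, (c_\omega)_{\omega \in \Omega})$ with group-element set $F \cup \{1_\Gamma\}$ and tolerance $\epsilon \ll \eta/|\Omega|^2$, yielding $(\bar b, (d_\omega)_{\omega \in \Omega})$ in $\cY$. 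The pairwise constraints then give $\mu(d_\omega) \approx \mu(c_\omega)$, $\mu(d_\omega \cap d_{\omega'}) \approx 0$ for $\omega \ne \omega'$, and $\mu(d_\omega \cap \gamma b_j) \approx \mu(c_\omega)$ or $0$ according as $\omega(j, \gamma) = +$ or $-$. Consequently the $d_\omega$'s are approximately pairwise disjoint with total measure $\approx 1$, and each $\gamma b_j$ is approximately the union of those $d_\omega$ with $\omega(j, \gamma) = +$. It follows that every Boolean combination of $(\gamma b_j)_{j < n,\, \gamma \in F}$ has measure close to the corresponding combination of $(\gamma a_i)$, furnishing the desired joint approximation and completing the cycle.
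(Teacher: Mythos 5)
Your argument fills in the details the paper leaves implicit and is essentially correct, with one fixable error in the compactness step for $(1)\Rightarrow(3)$. The atom-refinement construction — enlarging the tuple by the atoms $c_\omega$ of the finite subalgebra generated by $\{\gamma a_i : i<n,\,\gamma\in F\}$ so as to upgrade the pairwise approximations of Definition~\ref{df:wcon} to approximations of arbitrary Boolean combinations — is precisely the content the paper dismisses as ``easy to see,'' and you have executed it correctly; the error propagates through only $|\Omega|=2^{n|F|}$ atoms, a finite and controllable blow-up.

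The error: you assert that any model of $T=\Th(\cY)\cup\Diag_{\mathrm{qf}}(\cX)$ is an elementary extension of $\cY$. This is false as stated — a model of $\Th(\cY)$ is merely \emph{elementarily equivalent} to $\cY$, not an elementary extension of it. You should replace $\Th(\cY)$ by the elementary diagram $\mathrm{elDiag}(\cY)$, formulated in the language further expanded by a constant for each element of $\cY$. The rest of the proof then goes through unchanged: a finite fragment of $\mathrm{elDiag}(\cY)\cup\Diag_{\mathrm{qf}}(\cX)$ mentions only finitely many $\cY$-constants and finitely many $\cX$-constants; interpreting the $\cY$-constants in $\cY$ as themselves makes the $\mathrm{elDiag}$-conditions automatic, and your atom argument produces interpretations in $\cY$ for the $\cX$-constants. (Alternatively, your original $T$ can be salvaged by noting that a model $\cY''\models T$ satisfies $\cY''\equiv\cY$, so $\cY''$ and $\cY$ have a common elementary extension $\cY'$ into which $\cX$ embeds; but passing to the elementary diagram is cleaner and is the standard form of this argument.)
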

Thus two systems are \df{weakly equivalent} if they have the same existential theories. Alternatively, two systems $\cX$ and $\cY$ are weakly equivalent iff for all $\inf$-sentences $\phi$, we have that $\phi^\cX = \phi^\cY$. The space $W$ of weak equivalence classes of actions of $\Gamma$ can be identified with the space of existential theories consistent with $\PMP_\Gamma$. The topology on $W$ is the weakest topology that makes evaluation of $\inf$-sentences on $W$ continuous; it follows from the compactness theorem that this topology is compact as $W$ is a quotient of the space of all complete theories compatible with $\PMP_\Gamma$. This recovers a result of Abért and Elek~\cite{Abert2011p}. Note that $W$ can also be viewed as the Gelfand space of the abstract C$^*$-algebra generated by the $\inf$-sentences modulo $\PMP_\Gamma$.

\subsection{Strongly ergodic actions}

A system $\cX$ is \emph{ergodic} if every invariant set is trivial or, which is the same, if $\cX$ does not contain any trivial two-point system as a factor. In the same vein, a system $\cX$ is \df{strongly ergodic} if almost invariant sets are close to $\emptyset$ or $X$ or, more precisely, if it does not weakly contain any trivial two-point system. Equivalently, $\cX$ is strongly ergodic if for every $\epsilon>0$ there are $\delta>0$ and finitely many $\gamma_0, \dots, \gamma_{n-1} \in \Gamma$ such that, whenever $\epsilon < \mu(a) < 1-\epsilon$, we have $\max_{i<n}d_\mu(a, \gamma_ia)\geq\delta$.
If $\Gamma$ is finitely generated, then in the previous definition one can replace the $\gamma_i$ by the generators of $\Gamma$.

We have the following additional easy equivalences, which indicate why model-theoretic methods are likely to be useful for the study of these actions.

\begin{prop}
Let $\cX$ be a pmp\ $\Gamma$-system. The following are equivalent.
\begin{enumerate}
\item $\cX$ is strongly ergodic.
\item Every model of the (universal) first-order theory of $\cX$ is ergodic.
\item Every elementary extension $\cX'$ of $\cX$ is ergodic.
\end{enumerate}
\end{prop}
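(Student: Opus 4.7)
The plan is to establish the cycle (i) $\Rightarrow$ (ii) $\Rightarrow$ (iii) $\Rightarrow$ (i); the substance lies in the first and third implications.

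For (i) $\Rightarrow$ (ii), I would read off universal axioms directly from the quantitative reformulation of strong ergodicity given just before the proposition. For each rational $\epsilon>0$, strong ergodicity of $\cX$ supplies a $\delta_\epsilon>0$ and finitely many $\gamma_0,\dots,\gamma_{n-1}\in\Gamma$ such that the universal sentence
\[
\phi_\epsilon \;\equiv\; \sup_a \psi_\epsilon(a)\,\leq\,0,
\]
where $\psi_\epsilon(a)$ is a quantifier-free continuous-logic formula encoding the implication ``if $\epsilon\leq\mu(a)\leq 1-\epsilon$ then $\max_{i<n}d_\mu(a,\gamma_i a)\geq\delta_\epsilon$'', holds in $\cX$; hence $\phi_\epsilon\in\Th_\forall(\cX)$. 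Any $\cY\models\Th_\forall(\cX)$ then satisfies every $\phi_\epsilon$, is therefore strongly ergodic, and in particular ergodic.

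The implication (ii) $\Rightarrow$ (iii) is immediate: an elementary extension $\cX'$ of $\cX$ satisfies $\Th(\cX)\subseteq\Th(\cX')$, so in particular $\Th_\forall(\cX)\subseteq\Th(\cX')$, and $\cX'$ is thus a model of $\Th_\forall(\cX)$ and is ergodic by (ii).

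For (iii) $\Rightarrow$ (i), I argue contrapositively. If $\cX$ is not strongly ergodic then, by definition, the trivial two-point $\Gamma$-system $\cZ_2$ (two $\Gamma$-fixed atoms of measure $1/2$) is weakly contained in $\cX$. By Lemma~\ref{l:wcon-existential}, some elementary extension $\cX'$ of $\cX$ contains $\cZ_2$ as a substructure; the image of either atom is then a $\Gamma$-invariant element of $\cX'$ of measure $1/2$, violating ergodicity of $\cX'$.

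One point worth emphasizing, and the main conceptual obstacle if one tries to argue more abstractly, is that ergodicity itself is \emph{not} first-order expressible when $\Gamma$ is infinite, since $\Gamma$-invariance of an element $a$ requires quantifying over all of $\Gamma$. Consequently (ii) is a priori strictly stronger than (iii), and the content of the proposition is precisely that strong ergodicity is the correct first-order strengthening of ergodicity that is preserved under passage to models of the universal theory. The implication (i) $\Rightarrow$ (ii) therefore cannot be reduced to a syntactic preservation argument applied to ergodicity directly; one really needs the finite quantitative data extracted from strong ergodicity in $\cX$ to produce the witnessing universal axioms $\phi_\epsilon$.
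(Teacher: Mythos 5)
Your proof is correct, and since the paper presents this proposition without proof (it appears among what the authors call ``additional easy equivalences''), there is no argument of theirs to compare against; the route you take — reading universal axioms off the $\epsilon$-$\delta$ reformulation of strong ergodicity for (i)~$\Rightarrow$~(ii), and running Lemma~\ref{l:wcon-existential} for (iii)~$\Rightarrow$~(i) — is the natural one.

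The one imprecision is in (iii)~$\Rightarrow$~(i). The paper's definition of strong ergodicity is that $\cX$ does not weakly contain \emph{any} trivial two-point system $\cZ_p$ (two $\Gamma$-fixed atoms of measures $p$ and $1-p$, $0<p<1$). So what holds ``by definition'' when strong ergodicity fails is only that \emph{some} $\cZ_p$ with $p\in(0,1)$ is weakly contained, not necessarily $\cZ_{1/2}$; upgrading to $p=1/2$ when $\cX$ is ergodic is a theorem of Jones and Schmidt, not a formality, and is false for non-ergodic $\cX$ (consider the trivial action on a two-point space with measures $1/3, 2/3$). This is harmless to your argument: it applies verbatim with $\cZ_p$ in place of $\cZ_2$, since any $\Gamma$-invariant element of $\cX'$ of measure strictly between $0$ and $1$ already witnesses non-ergodicity of $\cX'$.
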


A group $\Gamma$ has \emph{property~(T)} if every ergodic pmp  $\Gamma$-system is strongly ergodic. (This is not the standard definition but it is equivalent by \cite{Connes1980, Schmidt1981}.) Below we give a characterization of property (T) in terms of the theory $\PMP_\Gamma$.

Recall that in continuous logic, a set $S$ in a given structure is \emph{$0$-definable} if the distance function to $S$ is a definable predicate. Similarly, if $S$ denotes a family of sets, one in each model of a given theory $T$, then $S$ is \emph{uniformly $0$-definable} if there is a common formula that gives in each model of $T$ the distance function to the corresponding set in $S$.

\begin{prop}
  \label{p:T-definable}
  Let $\Gamma$ be a countable group. Then the following are equivalent:
  \begin{enumerate}
  \item \label{i:Tdef:1} $\Gamma$ has property (T).
  \item \label{i:Tdef:2} The algebra of invariant sets $\set{a \in \cX : \gamma a = a \text{ for all } \gamma \in \Gamma}$ is uniformly $0$-definable in the models of $\PMP_\Gamma$.
  \item \label{i:Tdef:3} The class of ergodic $\Gamma$-actions is axiomatizable.
  \end{enumerate}
\end{prop}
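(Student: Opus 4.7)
The strategy is to establish the cycle \ref{i:Tdef:3} $\Rightarrow$ \ref{i:Tdef:1} $\Rightarrow$ \ref{i:Tdef:2} $\Rightarrow$ \ref{i:Tdef:3}. The first implication is immediate from the preceding proposition: an axiomatizable class is closed under ultrapowers, so if $\cX$ is ergodic then every ultrapower $\cX^\cU$ is again ergodic, whence $\cX$ is strongly ergodic; as this holds for every ergodic system, $\Gamma$ has property~(T).

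The substantive step is \ref{i:Tdef:1} $\Rightarrow$ \ref{i:Tdef:2}. A classical consequence of property~(T) is that $\Gamma$ is finitely generated, so I fix a finite generating set $F \sub \Gamma$ and consider the quantifier-free predicate $\phi(a) = \max_{\gamma \in F} d_\mu(\gamma a, a)$. One has $\phi^{-1}(0) = \Fix(\cX)$ in every model of $\PMP_\Gamma$, so for uniform $0$-definability it remains to exhibit a continuous function $h$ with $h(0) = 0$ such that $d_\mu(a, \Fix(\cX)) \leq h(\phi(a))$ uniformly in $\cX$ and $a$. This comes from the quantitative spectral form of property~(T): there is a Kazhdan constant $\kappa > 0$ such that in every unitary representation of $\Gamma$ every vector $v$ satisfies $\|v - Pv\| \leq \kappa^{-1} \max_{\gamma \in F} \|\gamma v - v\|$, where $P$ is the orthogonal projection onto the invariant subspace. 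Applied to $v = 1_a$ in the Koopman representation on $L^2(\cX)$, this gives $\|1_a - \mbE(1_a \mid \Fix(\cX))\|_2 \leq \kappa^{-1}\sqrt{\phi(a)}$; thresholding the invariant conditional expectation $\mbE(1_a \mid \Fix(\cX))$ at level $1/2$ then produces an invariant set $b \in \Fix(\cX)$ with $d_\mu(a, b)$ bounded by a fixed continuous function of $\phi(a)$, which is the desired $h$.

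The last implication \ref{i:Tdef:2} $\Rightarrow$ \ref{i:Tdef:3} is formal. Once $\Fix$ is uniformly $0$-definable by a predicate $\phi$ whose zero-set it is and which controls $d_\mu(\cdot, \Fix)$, ergodicity is precisely the implication ``$\phi(a) = 0 \Rightarrow \mu(a)(1-\mu(a)) = 0$''. This is a closed condition type-definable over the empty set; by continuous-logic compactness it is equivalent to a countable family of axioms of the form $\sup_a \psi_n(\phi(a), \mu(a)) \leq 0$ with $\psi_n$ continuous and forcing the implication in the limit, and these axioms therefore define the class of ergodic systems. The main obstacle in the whole argument is the quantitative Kazhdan estimate in \ref{i:Tdef:1} $\Rightarrow$ \ref{i:Tdef:2}; the model-theoretic packaging on either side is routine.
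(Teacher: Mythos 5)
Your proof is correct and follows essentially the same route as the paper's: the paper proves $\ref{i:Tdef:1}\Rightarrow\ref{i:Tdef:2}$ by citing the same quantitative consequence of property (T) that you derive from the Kazhdan constant (with the thresholding of $\mbE(1_a\mid\Fix)$ being the standard way to realize it), declares $\ref{i:Tdef:2}\Rightarrow\ref{i:Tdef:3}$ clear for the reason you spell out (quantification over the $0$-definable set $\Fix$ yields a sentence whose vanishing is ergodicity), and closes the cycle via $\ref{i:Tdef:3}\Rightarrow\ref{i:Tdef:1}$ using a saturated elementary extension where you use a non-principal ultrapower — the same idea in two dialects.
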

\begin{proof}
  \begin{cycprf}
  \item[\impnext] We use the equivalences of definability of sets given in \cite{BenYaacov2008}*{Proposition~9.19}, which also hold uniformly. Then the implication follows from the well-known fact that for a property (T) group $\Gamma$ with a finite generating set $Q$, for every $\eps > 0$ there exists $\delta > 0$ such that for any pmp action $\Gamma \actson \cX$, if $a \in \cX$ is $(Q, \delta)$-invariant (i.e., if $\max_{\gamma\in Q}d_\mu(a,\gamma a)\leq \delta$), then there exists $a' \in \cX$ which is $\Gamma$-invariant and satisfies $d_\mu(a, a') \leq \eps$.
    
  \item[\impnext] If $P(a)$ denotes the distance of $a$ to the algebra of invariant sets, then the condition
  $\sup_a \big(\min(\mu(a), 1-\mu(a)) - P(a)\big) \leq 0$ axiomatizes the class of ergodic systems within the models of $\PMP_\Gamma$.
    
  \item[\impfirst] If $\cX$ is a an ergodic $\Gamma$-system which is not strongly ergodic, then any saturated elementary extension of $\cX$ is not ergodic but is elementarily equivalent to $\cX$.
  \end{cycprf}
\end{proof}

%%%%%%%%%%%%%%%%%%%%%%%%%%%%%%%%%%%%%%%%%%%%%%%%%%

\section{Some basic facts about compact extensions and distal systems}
\label{sec:basic-facts-cpct-exts}

First we recall some of the theory of compact systems and compact extensions, as developed by Mackey~\cite{Mackey1966}, Furstenberg~\cite{Furstenberg1977}, and Zimmer~\cite{Zimmer1976a} (see also Glasner~\cite{Glasner2003}). In the above references, the authors work under the assumption that the systems are separable and make use of a variety of decomposition results that are only valid for separable spaces. As we work in a more general setting, we prove all results that we need.

Let $\cZ\subseteq\cX$ be an extension of measure algebras. A Hilbert subspace $M\subseteq L^2(\cX)$ is a \emph{$\cZ$-module} if it is closed under multiplication by functions of $L^\infty(\cZ)$. The $\cZ$-module generated by a set $S\subseteq L^2(\cX)$ is the closed span of $L^\infty(\cZ)S$.

Given $f,g\in L^2(\cX)$, we define $\langle f,g\rangle_\cZ\coloneqq \E(f\bar g|\cZ)\in L^1(\cZ)$ (the conditional expectation of $f\bar g$ relative to $\cZ$). A set $S\subseteq L^2(\cX)$ is \emph{$\cZ$-orthonormal} if $\langle f,g\rangle_\cZ=0$ for distinct $f,g\in S$, and $\langle f,f\rangle_\cZ$ is a characteristic function for each $f\in S$. If $f \in L^2(\cX)$, its \df{$\cZ$-support} (denoted by $\supp_\cZ f$) is the support of the function $\ip{f, f}_\cZ$, which is an element of $\cZ$. We will write $\chi_\cZ(f)$ for the characteristic function of $\supp_\cZ f$. Say that $S$ is a \emph{$\cZ$-basis} for a $\cZ$-module $M$ if it is $\cZ$-orthonormal, generates $M$, and each element of $S$ has full $\cZ$-support. 

The following lemma summarizes several properties of the conditional inner product $\ip{\cdot, \cdot}_\cZ$.
\begin{lemma}
  \label{l:inner-prod-Z}
  Let $\set{e_i : i < n}$ be a finite, generating, $\cZ$-orthonormal set for a $\cZ$-module $M \sub L^2(\cX)$, and let $f, g \in M$. Then the following statements hold:
  \begin{enumerate}
  \item \label{i:ip:1} (Cauchy--Schwarz) $|\ip{f, g}_\cZ|^2 \leq \ip{f, f}_\cZ \ip{g, g}_\cZ$.
  \item \label{i:ip:2} $\supp \ip{f, g}_\cZ \sub \supp_\cZ f \cap \supp_\cZ g$. In particular,
    \begin{equation*}
      \ip{f, g}_\cZ = \ip{f, g}_\cZ \chi_\cZ(f) \chi_\cZ(g).
    \end{equation*}
  \item \label{i:ip:4} $f = \sum_{i < n} \ip{f, e_i}_\cZ e_i$.
  \end{enumerate}
\end{lemma}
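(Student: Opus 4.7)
All three parts should come out of the interpretation of $\langle \cdot, \cdot \rangle_\cZ = \E(\cdot\,\overline{\cdot}\,|\cZ)$ as an $L^\infty(\cZ)$-valued inner product, together with the $L^2$-continuity of conditional expectation. Part (i) is Cauchy--Schwarz for this pairing, part (ii) is an easy corollary of it, and part (iii) is the ``Fourier expansion'' in a finite $\cZ$-basis, for which (i) and (ii) provide the needed continuity.

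\textbf{Part (i).} The classical trick adapted to conditional expectation: for any $\lambda \in L^\infty(\cZ)$ and any $f,g \in L^2(\cX)$,
\begin{equation*}
  0 \leq \E\bigl( |f - \lambda g|^2 \bigm| \cZ \bigr) = \langle f,f\rangle_\cZ - \lambda \overline{\langle f, g\rangle_\cZ} - \bar \lambda \langle f,g \rangle_\cZ + |\lambda|^2 \langle g,g \rangle_\cZ,
\end{equation*}
using linearity of conditional expectation and $L^\infty(\cZ)$-linearity in each slot. To optimize without dividing by $0$, I take $\lambda_\eps = \langle g,f\rangle_\cZ / (\langle g,g\rangle_\cZ + \eps)$, which is bounded and $\cZ$-measurable, substitute, and let $\eps \to 0$ using dominated convergence on $\cZ$. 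This gives $|\langle f,g\rangle_\cZ|^2 \le \langle f,f\rangle_\cZ \langle g,g\rangle_\cZ$ almost everywhere on $\cZ$.

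\textbf{Part (ii).} From (i), wherever $\langle f,f\rangle_\cZ = 0$ one has $\langle f,g\rangle_\cZ = 0$, and symmetrically for $g$ (using $\overline{\langle f,g\rangle_\cZ} = \langle g,f\rangle_\cZ$). Since $\chi_\cZ(f)$ and $\chi_\cZ(g)$ are by definition the characteristic functions of the supports of $\langle f,f\rangle_\cZ$ and $\langle g,g\rangle_\cZ$, this translates directly into $\supp\langle f,g\rangle_\cZ \subseteq \supp_\cZ f \cap \supp_\cZ g$, which is the displayed identity.

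\textbf{Part (iii).} Here is where I would spend most of the effort. First I note a consequence of (ii) applied to $f = g = e_j$: $\E(|e_j|^2 (1 - \chi_\cZ(e_j))\,|\,\cZ) = 0$, so $e_j = \chi_\cZ(e_j)\, e_j$ in $L^2(\cX)$. Since the $e_i$ generate $M$ as a $\cZ$-module, I pick approximations $f_k = \sum_{i<n} \phi_i^{(k)} e_i$ with $\phi_i^{(k)} \in L^\infty(\cZ)$ and $f_k \to f$ in $L^2(\cX)$. A short computation using $\cZ$-orthonormality of the $e_i$ gives $\langle f_k, e_j \rangle_\cZ = \phi_j^{(k)} \chi_\cZ(e_j)$, so
\begin{equation*}
  \sum_{j<n} \langle f_k, e_j\rangle_\cZ\, e_j = \sum_{j<n} \phi_j^{(k)} \chi_\cZ(e_j)\, e_j = \sum_{j<n} \phi_j^{(k)} e_j = f_k.
\end{equation*}
The remaining point is to pass to the limit on both sides. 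The left side converges to $\sum_j \langle f, e_j\rangle_\cZ e_j$ in $L^2(\cX)$, using part (i) and the estimate
\begin{equation*}
  \bigl\lVert (\langle f,e_j\rangle_\cZ - \langle f_k,e_j\rangle_\cZ)\, e_j \bigr\rVert_2^2 = \int |\langle f - f_k, e_j\rangle_\cZ|^2 \chi_\cZ(e_j)\, d\mu \leq \|f - f_k\|_2^2,
\end{equation*}
obtained by applying Cauchy--Schwarz for $\E(\cdot|\cZ)$ and then integrating. The right side converges to $f$ by construction, giving the desired identity.

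\textbf{Main obstacle.} Parts (i) and (ii) are routine. The subtlety in (iii) is that we lack pointwise disintegration (no separability), so everything needs to be done at the level of $L^2$-norms and Hilbert-module manipulations: one must show that the map $L^\infty(\cZ)^n \to M$, $(\phi_i) \mapsto \sum \phi_i e_i$ has dense image, that the coefficient functionals $f \mapsto \langle f, e_j\rangle_\cZ$ are $L^2$-continuous, and that $\langle f, e_j\rangle_\cZ \cdot e_j$ makes sense as an $L^2$ function even when $\langle f, e_j\rangle_\cZ \notin L^\infty(\cZ)$. All of these are controlled by (i), which is why it is essential to dispatch it first.
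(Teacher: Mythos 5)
Your proof follows essentially the same approach as the paper's. The paper dismisses (i) as ``standard'' and (ii) as following from (i), then for (iii) first verifies $\ip{f,e_i}_\cZ e_i\in L^2(\cX)$ via (i), notes the identity is immediate for finite $L^\infty(\cZ)$-combinations of the $e_i$, and concludes by observing the identity is a closed condition — which is precisely the $L^2$-continuity of $f\mapsto\sum_j\ip{f,e_j}_\cZ e_j$ that you establish with the displayed estimate. One small inaccuracy: in part (i) you assert that $\lambda_\eps=\ip{g,f}_\cZ/(\ip{g,g}_\cZ+\eps)$ is bounded; this is false for general $f,g\in L^2$ since $\ip{g,f}_\cZ$ need only lie in $L^1(\cZ)$. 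The standard fix is either to further truncate $\lambda_\eps$ at level $N$ and send $N\to\infty$, or to first prove the inequality for $f,g\in L^\infty(\cX)$ (where $\lambda_\eps$ genuinely is bounded) and then extend by $L^1$-continuity of conditional expectation; either patch is routine and does not affect the rest of the argument.
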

\begin{proof}
  \ref{i:ip:1} is standard and \ref{i:ip:2} follows from \ref{i:ip:1}.
  For \ref{i:ip:4}, first we check that $\ip{f, e_i}_\cZ e_i \in L^2(\cX)$. Using \ref{i:ip:1},
  \begin{equation*}
    \int |\ip{f, e_i}_\cZ e_i|^2 \leq \int \ip{f, f}_\cZ |e_i|^2 \leq \nm{f}^2.   
  \end{equation*}
  We also have $\ip{e_i, e_i}_\cZ e_i = e_i$.
  This implies that the identity holds when $f = \sum_{i < n} a_i e_i$ with $a_i \in L^\infty(\cZ)$, and as it is a closed condition, it must hold for all $f \in M$.
\end{proof}

\begin{lemma}
  \label{l:os-size}
  Let $\set{e_i : i < n}$ and $\set{f_j : j < m}$ be two generating, $\cZ$-orthonormal sets for a $\cZ$-module $M$. Let $a_{ij} = \ip{e_i, f_j}_\cZ$ and $A = (a_{ij})_{i < n, j < m}$. Then the following hold:
  \begin{enumerate}
  \item The entries of $A$ are in $L^\infty(\cZ)$.
  \item $AA^* = \diag \big(\chi_\cZ(e_0), \ldots, \chi_\cZ(e_{n-1}) \big)$ and $A^*A = \diag \big(\chi_\cZ(f_0), \ldots, \chi_\cZ(f_{m-1}) \big)$.
  \item $\sum_{i < n} \ip{e_i, e_i}_\cZ = \sum_{j < m} \ip{f_j, f_j}_\cZ = \sum_{i, j} |a_{ij}|^2$.
  \end{enumerate}
\end{lemma}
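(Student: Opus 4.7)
The plan is to treat the matrix $A$ as expressing the change of basis between the two $\cZ$-orthonormal generating sets, so that everything reduces to direct computation using Lemma~\ref{l:inner-prod-Z}. I first want to observe the two identities I will use repeatedly: by $\cZ$-orthonormality, $\ip{e_i, e_i}_\cZ$ is a characteristic function, and since its support is by definition $\supp_\cZ e_i$, we have $\ip{e_i,e_i}_\cZ = \chi_\cZ(e_i)$, and analogously for $f_j$. Also, the conditional inner product is conjugate-linear in the second variable with respect to multiplication by elements of $L^\infty(\cZ)$, since $\ip{f, ag}_\cZ = \mbE(f \bar a \bar g | \cZ) = \bar a \ip{f,g}_\cZ$.

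For part~(1), I would apply Cauchy--Schwarz (Lemma~\ref{l:inner-prod-Z}\ref{i:ip:1}) to get
\[
|a_{ij}|^2 = |\ip{e_i, f_j}_\cZ|^2 \leq \ip{e_i,e_i}_\cZ \ip{f_j,f_j}_\cZ = \chi_\cZ(e_i)\chi_\cZ(f_j) \leq 1
\]
almost everywhere, so $a_{ij} \in L^\infty(\cZ)$. For part~(2), I would use the expansion from Lemma~\ref{l:inner-prod-Z}\ref{i:ip:4} applied to the basis $\set{f_j}$: writing $e_k = \sum_j \ip{e_k, f_j}_\cZ f_j$ and plugging in, we obtain
\[
\ip{e_i, e_k}_\cZ = \sum_j \overline{\ip{e_k, f_j}_\cZ} \ip{e_i, f_j}_\cZ = \sum_j a_{ij}\overline{a_{kj}} = (AA^*)_{ik}.
\]
On the other hand, by $\cZ$-orthonormality of $\set{e_i}$, $\ip{e_i, e_k}_\cZ = \delta_{ik}\chi_\cZ(e_i)$, which gives the first claimed equality. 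The formula for $A^*A$ is obtained by the symmetric computation, expanding $f_l = \sum_i \ip{f_l, e_i}_\cZ e_i$.

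For part~(3), I would simply take the ``trace'' of the identities in part~(2): summing the diagonal entries of $AA^*$ gives $\sum_{i,j}|a_{ij}|^2 = \sum_i \chi_\cZ(e_i) = \sum_i \ip{e_i,e_i}_\cZ$, and summing the diagonal of $A^*A$ gives the analogous identity with the $f_j$.

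I do not anticipate a real obstacle here; the only point requiring a bit of care is tracking the complex conjugates correctly when writing out $(AA^*)_{ik} = \sum_j a_{ij}\overline{a_{kj}}$ in terms of the conditional inner product, so that the expansion of $e_k$ via the $f_j$-basis matches the matrix product on the nose.
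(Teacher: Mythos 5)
Your proof is correct and is exactly the elaboration of the paper's argument, which is stated only as ``This follows easily from Lemma~\ref{l:inner-prod-Z} and standard linear algebra.'' You expand each $e_k$ in the $\{f_j\}$-basis via Lemma~\ref{l:inner-prod-Z}\ref{i:ip:4}, use $\cZ$-sesquilinearity of $\ip{\cdot,\cdot}_\cZ$ over $L^\infty(\cZ)$, and read off the matrix identities and their trace, which is precisely the intended computation.
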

\begin{proof}
  This follows easily from Lemma~\ref{l:inner-prod-Z} and standard linear algebra.
\end{proof}

In the context of $\Gamma$-systems, we will be mainly interested in $\cZ$-modules that are $\Gamma$-invariant and finitely generated.

\begin{lemma}
Let $\cX$ be a $\Gamma$-system and let $\cZ$ be an ergodic factor thereof. Then, every finitely generated $\Gamma$-invariant $\cZ$-module $M\subseteq L^2(\cX)$ admits a finite $\cZ$-basis.
\end{lemma}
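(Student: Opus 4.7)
The plan is to produce a $\cZ$-basis in three stages. First, a conditional Gram--Schmidt procedure yields a finite $\cZ$-orthonormal generating set (without the full-support condition). Second, $\Gamma$-invariance of $M$ combined with ergodicity of $\cZ$ forces the ``local rank'' function associated to such a set to be a constant integer $k$. Third, a combinatorial rearrangement on $\cZ$-measurable cells of $Z$ converts this set into an actual $\cZ$-basis of cardinality $k$.

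For step one, starting from a finite generating family $f_0,\ldots,f_{k-1}$ of $M$, I would normalize $f_0$ on its $\cZ$-support (and set it to $0$ elsewhere) to obtain $e_0$; this is meaningful because $\E(|f|^2 \mid \cZ) = 0$ on a $\cZ$-set forces $f$ to vanish there, so division by $\sqrt{\ip{f_0,f_0}_\cZ}$ on $\supp_\cZ f_0$ is well defined. Then I would iterate: set $f'_{i+1} = f_{i+1} - \sum_{j\le i}\ip{f_{i+1},e_j}_\cZ e_j$ and normalize to produce $e_{i+1}$. Using the support identity Lemma~\ref{l:inner-prod-Z}\ref{i:ip:2}, $\cZ$-orthogonality is preserved at each step, yielding a finite $\cZ$-orthonormal generating family $\{e_0,\ldots,e_{n-1}\}$ of $M$ with $\cZ$-supports $A_i \coloneqq \supp_\cZ e_i$.

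For step two, consider the integer-valued function $m \coloneqq \sum_{i<n}\ip{e_i,e_i}_\cZ = \sum_{i<n}\chi_\cZ(e_i)$ on $\cZ$. By Lemma~\ref{l:os-size}(iii), $m$ does not depend on the choice of $\cZ$-orthonormal generating family for $M$. For any $\gamma\in\Gamma$, the family $\{\gamma e_i\}$ is again $\cZ$-orthonormal (since $\gamma$ commutes with $\E(\cdot\mid\cZ)$, as $\cZ$ is a factor) and still generates $M$ (as $M$ is $\Gamma$-invariant). Hence $\gamma m = m$ for every $\gamma$, and ergodicity of $\cZ$ forces $m$ to equal a constant integer $k$.

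For step three, partition $Z$ into the $\cZ$-measurable cells $B_I \coloneqq \bigcap_{i\in I}A_i \setminus \bigcup_{i\notin I}A_i$, indexed by the $k$-subsets $I\subseteq\{0,\ldots,n-1\}$; these exhaust $Z$ precisely because $m\equiv k$. On each $B_I$, enumerate $I$ in increasing order as $i_0(I)<\cdots<i_{k-1}(I)$, and set $f_j \coloneqq \sum_{|I|=k}\chi_{B_I}\,e_{i_j(I)}$ for $j<k$. A direct calculation shows that each $f_j$ lies in $M$, that $\{f_0,\ldots,f_{k-1}\}$ is $\cZ$-orthonormal with full $\cZ$-support $Z$, and that each $e_i$ is expressible as an $L^\infty(\cZ)$-combination of the $f_j$'s on each cell, so $\{f_0,\ldots,f_{k-1}\}$ is the desired $\cZ$-basis.

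The genuine obstacle is step two: both $\Gamma$-invariance of $M$ and ergodicity of $\cZ$ are essential to promote the locally varying rank into a global integer. Without them, Gram--Schmidt alone only delivers a $\cZ$-orthonormal generating family, which is strictly weaker than a $\cZ$-basis in the sense of this paper. Steps one and three are essentially finite-dimensional linear algebra over the commutative algebra $L^\infty(\cZ)$ and I do not expect real difficulty there.
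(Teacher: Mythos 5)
Your proposal is correct and follows essentially the same three-step strategy as the paper's proof: conditional Gram--Schmidt to get a $\cZ$-orthonormal generating set, invariance plus ergodicity to make $\sum_i \chi_\cZ(e_i)$ a constant integer $k$ (via Lemma~\ref{l:os-size}), and a cellwise reshuffling over $k$-subsets to obtain a full-support basis of size $k$. The only cosmetic difference is that you spell out the cells $B_I$ as a disjoint partition with an explicit set difference, whereas the paper writes $A_\alpha=\bigcap_{i\in\alpha}S_i$ and leaves the disjointness (which is automatic once the rank is constantly $k$) implicit.
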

\begin{proof}
Let $f_1, \ldots, f_n$ be generators for $M$. We use Gram--Schmidt orthogonalization. Set
\begin{equation*}
  h_1 = f_1, \quad h_2 = f_2 - P_{\langle f_1 \rangle_{L^\infty(\cZ)}} f_2, \quad \ldots, \quad h_n = f_n - P_{\langle f_1, \ldots, f_{n-1} \rangle_{L^\infty(\cZ)}} f_n,
\end{equation*}
where $P_{\langle F \rangle_{L^\infty(\cZ)}}$ denotes the orthogonal projection onto the $\cZ$-module generated by $F$. Then $\langle h_i,h_j\rangle_\cZ=0$ for $i\neq j$ and $M = \boplus_{i = 1}^n M_{h_i}$ where $M_h$ is the $\cZ$-module generated by $h$. Let $S_i = \supp_\cZ h_i$. We define
$e_i=\big(\langle h_i,h_i\rangle_\cZ\big)^{-1/2}h_i$ with the convention that $e_i$ vanishes outside $S_i$. Then $\int |e_i|^2=\int_{S_i}\frac{|h_i|^2}{\langle h_i,h_i\rangle_\cZ}=\mu(S_i)$, so $e_i \in L^2(\cX)$ and $\set{e_i : i < n}$ is a generating $\cZ$-orthonormal system for $M$. (Note that if $h\in L^2(\cX)$ and $g\in L^0(\cZ)$ is any $\cZ$-measurable function such that $gh\in L^2(\cX)$, then $gh\in M_h$.)

By Lemma~\ref{l:os-size}, the function $\sum_{i<n}\bOne_{S_i}$ is $\Gamma$-invariant, so by the ergodicity assumption it must be constant, say equal to $k$. Denote by $\binom{n}{k}$ the set of all $k$-element subsets of $n$. For $\alpha \in \binom{n}{k}$, let $A_\alpha = \bigcap_{i \in \alpha} S_i$.
For $i < k$, define $\sigma_i(\alpha)$ for $\alpha \in \binom{n}{k}$ to be the $i$-th element of $\alpha$. Finally, for $i < k$, define $e_i' = \sum_{\alpha \in \binom{n}{k}} \bOne_{A_\alpha} e_{\sigma_i(\alpha)}$. Then one easily checks that $\set{e_i' : i < k}$ is a $\cZ$-basis for $M$.
\end{proof}

Suppose now that $\cZ\subseteq\cX$ is an extension of $\Gamma$-systems and that $M$ is a $\Gamma$-invariant $\cZ$-module admitting a $\cZ$-basis $\{e_0,\dots,e_{n-1}\}$. Let $X$ be the probability space associated to $\cX$. If $\gamma\in\Gamma$, the set $\{\gamma e_i\}_{i<n}$ is again a $\cZ$-basis for $M$. Thus, if for $x\in X$ we define
\begin{equation*}
u_\gamma(x)=\big(\langle\gamma e_i,e_j\rangle_{\cZ}(x)\big)_{ij} \in \C^{n\times n},  
\end{equation*}
by Lemma~\ref{l:os-size}, $u_\gamma(x)$ is a unitary matrix almost surely and the function $u_\gamma$ is $\cZ$-measurable.

Now consider the function $\theta\colon X\to \R^+$ defined by
\begin{equation*}
  \theta(x)=\sum_{i<n}|e_i(x)|^2=\langle E(x),E(x)\rangle,
\end{equation*}
where $E=(e_1,\dots,e_n)$ and $\ip{\cdot, \cdot}$ denotes the usual inner product on $\C^n$. We have, almost surely,
\begin{equation*}
  \theta(\gamma^{-1}x)=\langle (\gamma E)(x),(\gamma E)(x)\rangle=\langle u_\gamma(x)E(x),u_\gamma(x)E(x)\rangle=\langle E(x),E(x)\rangle.
\end{equation*}
That is, $\theta$ is $\Gamma$-invariant. Hence, if we further assume that $\cX$ is ergodic, $\theta$ must be constant, and this implies that each $e_i$ is bounded. Thus we have proved the following.

\begin{prop}
  \label{p:exist-basis}
  Let $\cX$ be an ergodic $\Gamma$-system, $\cZ \sub \cX$ be a factor, and $M \sub L^2(\cX)$ be a finitely generated, $\Gamma$-invariant $\cZ$-module. Then $M$ admits a finite $\cZ$-basis and the elements of any such basis are in $L^\infty(\cX)$.
\end{prop}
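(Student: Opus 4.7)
The plan is to extract the existence part from the preceding lemma and the boundedness part from the change-of-basis computation carried out just above the statement. Both ingredients are essentially already in place; the proposition just packages them and records that the $L^\infty$ conclusion depends only on the existence of a finite $\cZ$-basis and on the ergodicity of $\cX$.

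First I would observe that ergodicity passes from $\cX$ to $\cZ$: any $\Gamma$-invariant element of $\cZ$ is $\Gamma$-invariant in $\cX$, hence trivial. The preceding lemma then immediately produces a finite $\cZ$-basis $\set{e_0, \ldots, e_{n-1}}$ of $M$.

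For the boundedness, I would repeat the calculation sketched just before the statement, but now applied to an \emph{arbitrary} finite $\cZ$-basis $\set{e_0, \ldots, e_{n-1}}$ of $M$ (so that the conclusion $e_i \in L^\infty(\cX)$ holds for any such basis, as the statement asserts). Writing $E = (e_0, \ldots, e_{n-1})$, for every $\gamma \in \Gamma$ the translate $\set{\gamma e_i : i < n}$ is again a finite $\cZ$-basis of $M$. Lemma~\ref{l:os-size} therefore yields a $\cZ$-measurable function $u_\gamma \colon X \to U(n)$ such that $(\gamma E)(x) = u_\gamma(x) E(x)$ for $\mu$-almost every $x$. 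I would then set
\[
\theta(x) = \sum_{i<n} |e_i(x)|^2 = \ip{E(x), E(x)},
\]
and use unitarity of $u_\gamma(x)$ to deduce $\theta(\gamma^{-1} x) = \ip{u_\gamma(x) E(x), u_\gamma(x) E(x)} = \theta(x)$ almost surely. Hence $\theta$ is $\Gamma$-invariant, and ergodicity of $\cX$ forces $\theta \equiv c$ for some constant $c \geq 0$. Since $|e_i|^2 \leq \theta = c$ pointwise, each $e_i$ lies in $L^\infty(\cX)$ with $\nm{e_i}_\infty \leq \sqrt{c}$.

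The only point requiring a little care is verifying that $u_\gamma(x)$ is a genuine unitary almost everywhere, rather than merely a partial isometry: this is where the \emph{full-$\cZ$-support} clause in the definition of a $\cZ$-basis is needed, so that the diagonal matrices in Lemma~\ref{l:os-size}(ii) equal the identity. Everything else is direct manipulation of the definitions, and no new ideas beyond what the section has already developed are required.
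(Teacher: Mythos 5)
Your proposal is correct and follows the paper's own argument closely: the existence of a finite $\cZ$-basis comes from the preceding lemma (after noting that ergodicity of $\cX$ implies ergodicity of $\cZ$, a point the paper leaves implicit), and the boundedness follows from the computation with $u_\gamma$ and $\theta(x) = \sum_i |e_i(x)|^2$ exactly as in the paragraph before the proposition. Your remark about needing the full-$\cZ$-support clause so that $u_\gamma(x)$ is genuinely unitary is the right point of care, and it is precisely why Lemma~\ref{l:os-size} is invoked there.
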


Recall that a system $\cX$ is called \df{compact} if $L^2(\cX)$ decomposes as a sum of finite-dimensional representations of $\Gamma$. An extension $\cX \supseteq \cZ$ is \df{compact} if $L^2(\cX)$ decomposes as a direct sum of finitely generated, $\Gamma$-invariant $\cZ$-modules. A factor generated by compact factors is a compact factor as well. More generally, the following holds.

\begin{prop}
  \label{p:factor-generated-by-compact-factors}
Let $\cX$ be an ergodic system. Suppose $\cZ_i\subseteq\cY_i\subseteq\cX$, $i \in I$, are factors such that each extension $\cZ_i\subseteq\cY_i$ is compact. Then the factor $\cY \sub \cX$ generated by all the $\cY_i$ is a compact extension of the factor $\cZ$ generated by all the $\cZ_i$.
\end{prop}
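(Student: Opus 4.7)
The plan is to prove the statement first for two factors using products of $\cZ$-bases, and then to extend to arbitrary index sets by induction and density. I will use the following characterization, which is straightforward from the definition of a compact extension (via a Zorn's lemma argument using the fact that the orthogonal projection onto a finitely generated $\Gamma$-invariant $\cZ$-submodule commutes with the $L^\infty(\cZ)$-action) together with Proposition~\ref{p:exist-basis}: an extension $\cV \sub \cW$ is compact if and only if the set $\cC(\cV, \cW) \sub L^\infty(\cW)$ of functions in $L^2(\cW)$ that belong to some finitely generated $\Gamma$-invariant $\cV$-submodule is $L^2$-dense in $L^2(\cW)$.

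For the two-factor case ($|I| = 2$), suppose $f_j \in \cC(\cZ_j, \cY_j)$ sits in a finitely generated $\Gamma$-invariant $\cZ_j$-module with $L^\infty$-basis $\{e_i^j\}_{i < n_j}$. The $\cZ$-module $M$ spanned by the $n_1 n_2$ bounded functions $\{e_i^1 e_k^2\}$ contains $f_1 f_2$ and is $\Gamma$-invariant: writing $\gamma e_i^j = \sum_\ell c_{i\ell}^j e_\ell^j$ with $c_{i\ell}^j \in L^\infty(\cZ_j) \sub L^\infty(\cZ)$, one computes $\gamma(e_i^1 e_k^2) = (\gamma e_i^1)(\gamma e_k^2) \in \sum_{\ell, m} L^\infty(\cZ) (e_\ell^1 e_m^2)$. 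Hence $f_1 f_2 \in \cC(\cZ, \cY)$. The same product-of-bases construction applied internally (with both $\cZ_j$ replaced by $\cZ$) shows that $\cC(\cZ, \cY)$ is closed under multiplication; coupled with closure under addition, closure under conjugation (replace bases by $\{\bar e_i\}$), and containment of $L^\infty(\cZ)$ via rank-one modules $L^\infty(\cZ) \cdot 1$, this makes $\cC(\cZ, \cY)$ a unital $\Gamma$-invariant $*$-subalgebra of $L^\infty(\cY)$ containing both $\cC(\cZ_1, \cY_1)$ and $\cC(\cZ_2, \cY_2)$.

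For the density step I invoke the following measure-theoretic Stone--Weierstrass fact: the weak-$*$ closure of a unital $*$-subalgebra $B$ of $L^\infty(\cY)$ is a commutative von Neumann subalgebra and so has the form $L^\infty(\cY')$ for a unique factor $\cY' \sub \cY$, whence the $L^2$-closure of $B$ equals $L^2(\cY')$. Applied to $\cC(\cZ, \cY)$: since compactness of $\cZ_j \sub \cY_j$ forces $\cC(\cZ_j, \cY_j)$ to be $L^2$-dense in $L^2(\cY_j)$, the $L^2$-closure of $\cC(\cZ, \cY) \supseteq \cC(\cZ_j, \cY_j)$ contains $L^2(\cY_j)$ for $j = 1, 2$, so the associated factor $\cY'$ satisfies $\cY' \supseteq \cY_1 \vee \cY_2 = \cY$. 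Thus $\cC(\cZ, \cY)$ is $L^2$-dense in $L^2(\cY)$, settling the two-factor case.

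For general $I$, induction on the size of finite $F \sub I$ gives compactness of $\cZ_F \coloneqq \bigvee_{i \in F} \cZ_i \sub \cY_F \coloneqq \bigvee_{i \in F} \cY_i$. As $F$ ranges over finite subsets of $I$, the union $\bigcup_F L^2(\cY_F)$ is $L^2$-dense in $L^2(\cY)$, and any finitely generated $\Gamma$-invariant $\cZ_F$-submodule remains finitely generated and $\Gamma$-invariant over $\cZ$ after extension of scalars, so $\cC(\cZ_F, \cY_F) \sub \cC(\cZ, \cY)$ for every such $F$. This gives the required density and completes the argument. The main obstacle I expect is the density step for two factors---specifically, using Stone--Weierstrass to pass from ``$\overline{\cC(\cZ, \cY)}^{L^2}$ contains each $L^2(\cY_j)$'' to ``it contains $L^2(\cY_1 \vee \cY_2)$''---which must be verified carefully in the non-separable setting adopted in this paper (the naive attempt to directly approximate $g_1 g_2$ by $f_1^{(n)} f_2^{(n)}$ for $f_j^{(n)} \in \cC(\cZ_j, \cY_j)$ fails without $L^\infty$-bounds on the approximants, and the Stone--Weierstrass step is precisely what finesses this).
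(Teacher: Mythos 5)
Your proof is correct and takes essentially the same route as the paper's: the paper likewise multiplies $\cZ$-bases to form ``product modules'' $M_1\cdots M_k$, observes (labelling it ``easy to check'') that their sum is $L^2$-dense in $L^2(\cY)$, and then orthogonalizes by Zorn's lemma to obtain the direct-sum decomposition. You simply package the Zorn step into an upfront characterization (compact $\Leftrightarrow$ the union of finitely generated $\Gamma$-invariant $\cZ$-submodules is $L^2$-dense) and fill in the density step with an explicit Kaplansky/von Neumann algebra argument.
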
 
\begin{proof}
 By hypothesis, we have $L^2(\cY_i)=\bigoplus_{M\in\mathcal{M}_i}M$ where each $M$ is a finitely generated $\Gamma$-invariant $\cZ_i$-module. If $\{e_0,\dots,e_{n-1}\}\subset L^\infty(\cX)$ is a $\cZ_{i_1}$-basis for $M_1\in\mathcal{M}_{i_1}$ and $\{f_0, \dots, f_{m-1}\} \subset L^\infty(\cX)$ is a $\cZ_{i_2}$-basis for $M_2\in\mathcal{M}_{i_2}$, we denote by $M_1M_2$ the $\cZ$-module generated by the products $\{e_p f_q : p <  n, q < m\}$ (which are in $L^\infty(\cX)$ by Proposition~\ref{p:exist-basis}). Similarly, we define the product modules $M_1M_2\cdots M_k$ over $\cZ$ for any given $M_j\in\mathcal{M}_{i_j}$. These are clearly $\Gamma$-invariant. Then it is easy to check that
 \begin{equation*}
   L^2(\cY)=\sum\{M_1\cdots M_k: k \in \N, M_j\in\mathcal{M}_{i_j}\text{ for some }i_j \in I\}.
 \end{equation*}
 A standard orthogonalization procedure using Zorn's lemma then yields $L^2(\cY)=\bigoplus_s M'_s$ for some appropriate finitely generated $\Gamma$-invariant $\cZ$-modules $M'_s$.
\end{proof} 

\begin{cor}
  \label{c:maximal-cpct-extension}
  Let $\cX$ be an ergodic system and $\cZ \sub \cX$ be a factor of $\cX$. Then there exists a factor $\cY$ of $\cX$ such that $\cZ \sub \cY \sub \cX$, the extension $\cY \supseteq \cZ$ is compact, and every compact extension of $\cZ$ in $\cX$ is contained in $\cY$. This $\cY$ is called \df{the maximal compact extension of $\cZ$ in $\cX$}.
\end{cor}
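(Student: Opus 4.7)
The plan is to apply Proposition~\ref{p:factor-generated-by-compact-factors} with a constant family of base factors equal to $\cZ$. More precisely, let $\cF$ be the collection of all factors $\cY' \sub \cX$ such that $\cZ \sub \cY'$ and the extension $\cZ \sub \cY'$ is compact. This is a set (not a proper class) since every element of $\cF$ is determined by a closed $\Gamma$-invariant subalgebra of $\cX$, and it is nonempty because $\cZ$ itself is trivially a compact extension of $\cZ$ (one may take the $\cZ$-basis consisting of the single element $\bOne$).

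Let $\cY \sub \cX$ denote the factor of $\cX$ generated by $\bigcup_{\cY' \in \cF} \cY'$. Applying Proposition~\ref{p:factor-generated-by-compact-factors} to the family of extensions $\cZ \sub \cY'$, indexed by $\cY' \in \cF$ (so that each ``$\cZ_i$'' equals $\cZ$ and each ``$\cY_i$'' equals some $\cY' \in \cF$), we conclude that the factor generated by $\bigcup \cY'$ is a compact extension of the factor generated by the $\cZ_i$, which is simply $\cZ$. Hence $\cZ \sub \cY$ is a compact extension.

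By construction, any compact extension of $\cZ$ inside $\cX$ belongs to $\cF$ and is therefore contained in $\cY$. This gives the claimed maximality property, and uniqueness is immediate: if $\cY_1$ and $\cY_2$ both had this property, then each would be contained in the other. There is no real obstacle here; the only point that requires a moment's attention is verifying that $\cF$ is indeed a set and that it contains at least one element, both of which we have addressed above. The substance of the corollary lies entirely in Proposition~\ref{p:factor-generated-by-compact-factors}.
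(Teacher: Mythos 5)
Your proof is correct and follows exactly the approach the paper intends: the corollary is stated as an immediate consequence of Proposition~\ref{p:factor-generated-by-compact-factors}, applied to the set of all intermediate compact extensions of $\cZ$ in $\cX$ with the base factors all taken equal to $\cZ$. The auxiliary observations (that this collection is a set and is nonempty) are correct and appropriate.
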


Next we check that a factor of a compact extension is also compact.

\begin{lemma}\label{l:factor-of-compact-is-compact}
Suppose we have $\cZ\subseteq \cY\subseteq \cX$ and the extension $\cZ\subseteq \cX$ is compact. Then the extensions $\cZ\subseteq \cY$ and $\cY\subseteq\cX$ are compact.
\end{lemma}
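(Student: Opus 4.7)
The plan is to handle the two extensions $\cZ \subseteq \cY$ and $\cY \subseteq \cX$ separately, in each case producing the required direct sum decomposition from the given decomposition $L^2(\cX) = \bigoplus_i M_i$ into finitely generated, $\Gamma$-invariant $\cZ$-modules. In both cases I would use the standard Zorn's lemma orthogonalization employed in the proof of Proposition~\ref{p:factor-generated-by-compact-factors}, which rests on the observation that the orthogonal complement in $L^2$ of a closed $\cZ$-submodule $N$ is again a $\cZ$-submodule: if $f \perp N$ and $h \in L^\infty(\cZ)$, then for any $g \in N$ we have $\langle hf, g\rangle = \langle f, \bar h g\rangle = 0$, since $\bar h g \in N$. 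It suffices, therefore, to exhibit for each extension a family of finitely generated, $\Gamma$-invariant modules over the smaller algebra whose sum is dense in the corresponding $L^2$-space.

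For the extension $\cY \subseteq \cX$, I would enlarge each $M_i$ to a $\cY$-module $M_i'$, defined as the closure of $L^\infty(\cY)\cdot M_i$ in $L^2(\cX)$. If $f_1,\dots,f_n$ are $\cZ$-module generators of $M_i$, then the same $f_j$ generate $M_i'$ as a $\cY$-module, so $M_i'$ is finitely generated. Moreover, $M_i'$ is $\Gamma$-invariant: for $h \in L^\infty(\cY)$ and $f \in M_i$, one has $\gamma(hf) = (\gamma h)(\gamma f) \in L^\infty(\cY)\cdot M_i$, using the $\Gamma$-invariance of $\cY$ and of $M_i$, and this extends to the closure by continuity of $\gamma$. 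Since $\sum_i M_i' \supseteq \sum_i M_i$ is dense in $L^2(\cX)$, the orthogonalization step yields the desired decomposition.

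For the extension $\cZ \subseteq \cY$, I would instead push each $M_i$ down into $L^2(\cY)$ via the conditional expectation $E_\cY \coloneqq \mbE(\cdot \mid \cY)$. Because $L^\infty(\cZ) \subseteq L^\infty(\cY)$, the map $E_\cY$ is $L^\infty(\cZ)$-linear, so taking $N_i$ to be the closed $\cZ$-submodule of $L^2(\cY)$ generated by $\{E_\cY(f_j)\}_{j \le n}$ gives a finitely generated $\cZ$-module containing $E_\cY(M_i)$. Since $\cY$ is $\Gamma$-invariant and each $\gamma$ is measure-preserving, $E_\cY$ commutes with the $\Gamma$-action, whence $N_i$ is $\Gamma$-invariant. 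Finally, density of $\sum_i N_i$ in $L^2(\cY)$ follows from density of $\sum_i M_i$ in $L^2(\cX)$ together with continuity and surjectivity of $E_\cY$ onto $L^2(\cY)$, and another Zorn orthogonalization completes the proof.

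I do not foresee a substantial obstacle here; the argument is essentially a bookkeeping verification that finite generation, $\Gamma$-invariance, and the module structure are all preserved under the two natural operations of extending scalars from $L^\infty(\cZ)$ to $L^\infty(\cY)$ and of projecting onto the $\Gamma$-invariant subspace $L^2(\cY)$. The only point one should be careful about in the second case is to take $N_i$ to be the closed $\cZ$-module generated by the images of generators rather than the set-theoretic image $E_\cY(M_i)$, as the latter need not be closed a priori.
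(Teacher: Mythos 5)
Your proof is correct and follows the same two-step strategy as the paper's (terse) proof: push the modules $M_i$ down to $L^2(\cY)$ via conditional expectation for the extension $\cZ\subseteq\cY$, and regard them as $\cY$-modules by extension of scalars for $\cY\subseteq\cX$. You are in fact slightly more careful than the paper, which asserts that the projection of a closed $\cZ$-module is again such a module without remarking that the set-theoretic image under $E_\cY$ may fail to be closed; your fix of taking the closed $\cZ$-module generated by $E_\cY$ of the generators is the right one.
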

\begin{proof}
The projection of a finitely generated $\Gamma$-invariant $\cZ$-module in $L^2(\cX)$ to $L^2(\cY)$ is again a finitely generated $\Gamma$-invariant $\cZ$-module. This readily implies that $\cZ\subseteq \cY$ is compact. For the extension $\cY \sub \cX$, this is clear.
\end{proof}

Suppose that $N$ is a finitely generated $\Gamma$-invariant $\cZ$-module and suppose that $E = (e_0, \ldots, e_{n-1})$ is a $\cZ$-basis for $N$. Define the \df{matrix coefficients} $m_{E} \colon \Gamma \to M_n(L^\infty(\cZ))$ by
\begin{equation*}
  m_E(\gamma) = \big( \ip{\gamma \cdot e_i, e_j}_\cZ \big)_{i,j < n}.
\end{equation*}
\begin{lemma}
  \label{l:dim-at-most-n}
  Suppose $\cX \supseteq \cZ$ is a compact extension with $\cX$ ergodic. Let $E=(e_1,\dots,e_n)$ be a $\cZ$-orthonormal basis for a finitely generated, $\Gamma$-invariant module $M \sub L^2(\cX)$. Then the set $\{E'\in L^2(\cX)^{\oplus n} : m_{E'} = m_E \}$ spans a subspace of $L^2(\cX)^{\oplus n}$ of dimension at most~$n$.
\end{lemma}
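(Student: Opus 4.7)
The strategy is to bound the dimension of the (larger) $\C$-linear subspace
$$
V = \Bigl\{(f_1,\dots,f_n)\in L^2(\cX)^{\oplus n} : \gamma f_i = \sum_{j=1}^n m_E(\gamma)_{ij}\, f_j \text{ for all } \gamma\in\Gamma \text{ and all } i\Bigr\}.
$$
Every $E'$ in the given set lies in $V$: since $m_{E'}(1_\Gamma) = m_E(1_\Gamma)$, the tuple $(e'_1,\dots,e'_n)$ is $\cZ$-orthonormal with full $\cZ$-support, so Lemma~\ref{l:inner-prod-Z}\ref{i:ip:4} yields $\gamma e'_i = \sum_j \langle\gamma e'_i, e'_j\rangle_\cZ\, e'_j = \sum_j m_E(\gamma)_{ij}\, e'_j$. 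Hence the span of the set in question is contained in $V$, and it suffices to prove $\dim_\C V \le n$.

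Suppose for contradiction that there exist $n+1$ linearly independent vectors in $V$. Gram--Schmidt orthonormalization inside the Hilbert space $L^2(\cX)^{\oplus n}$ produces orthonormal elements $F^{(1)}, \dots, F^{(n+1)}\in V$ (they remain in $V$ because $V$ is a $\C$-linear subspace). Writing $F^{(r)}=(f^{(r)}_1,\dots,f^{(r)}_n)$, I consider
$$
K_{rs}(x) = \sum_{i=1}^n f^{(r)}_i(x)\, \overline{f^{(s)}_i(x)} \in L^1(\cX).
$$
The central computation is to show that each $K_{rs}$ is $\Gamma$-invariant: substituting the defining relation of $V$ into $\gamma K_{rs}$ produces the factor $\sum_i m_E(\gamma)_{ij}\overline{m_E(\gamma)_{ik}}$, which equals $\delta_{jk}$ almost everywhere by the pointwise unitarity of $m_E(\gamma)$ recorded in Lemma~\ref{l:os-size}, so the double sum collapses back to $K_{rs}$.

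Once $\Gamma$-invariance is established, ergodicity of $\cX$ forces $K_{rs}$ to be almost surely equal to its integral, namely $\sum_i \langle f^{(r)}_i, f^{(s)}_i\rangle_{L^2(\cX)} = \langle F^{(r)}, F^{(s)}\rangle = \delta_{rs}$. But then at almost every $x$ the complex vectors $v_r(x) = (f^{(r)}_1(x),\dots,f^{(r)}_n(x))\in\C^n$ form an orthonormal family of size $n+1$ inside $\C^n$, which is impossible. The main technical step I expect to be the $\Gamma$-invariance computation for $K_{rs}$, since it is precisely there that the unitarity of the matrix coefficients from Lemma~\ref{l:os-size} does all the work; the remaining pieces (Gram--Schmidt, ergodicity, the pointwise count) are routine.
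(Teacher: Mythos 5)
Your proof is correct and takes essentially the same route as the paper's: both use the pointwise unitarity of $m_E(\gamma)$ to show that the $\C^n$-valued pointwise inner product of two tuples transforming by the same cocycle is $\Gamma$-invariant (hence constant by ergodicity), and both conclude with a pointwise dimension count in $\C^n$. Your packaging via the linear subspace $V$ and Gram--Schmidt is a cosmetic reorganization of the paper's argument with the $(n+1)\times(n+1)$ Gram matrix, and like the paper you tacitly use that $\gamma e'_i$ lies in the $\cZ$-module generated by $E'$ (needed to invoke Lemma~\ref{l:inner-prod-Z}\ref{i:ip:4}), a standard isometry fact.
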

\begin{proof}
  Let $E'=(e'_1,\dots,e'_n)$ be a tuple with $m_{E'} = m_E$. We claim that the function $x \mapsto \ip{E(x), E'(x)}$ is $\Gamma$-invariant and therefore constant. (Here, again, $\ip{\cdot, \cdot}$ denotes the usual inner product in $\C^n$.) Indeed, given $\gamma\in\Gamma$ and $i,j < n$, the conditional expectations $\ip{\gamma e_i,e_j}_\cZ$ and $\ip{\gamma e'_i,e'_j}_\cZ$ are equal. Hence, the $\cZ$-measurable map $u_\gamma\colon X\to U(n)$ given by $u_\gamma(x)=(\ip{\gamma e_i,e_j}_\cZ(x))_{ij}$ satisfies $(\gamma E)(x)=u_\gamma(x)E(x)$ and also $(\gamma E')(x)=u_\gamma(x)E'(x)$. Thus
  \begin{equation*}
  \langle E(\gamma^{-1}x),E'(\gamma^{-1}x)\rangle=\langle u_\gamma(x)E(x),u_\gamma(x)E'(x)\rangle =\langle E(x),E'(x)\rangle,  
\end{equation*}
as desired.

Now consider $n+1$ tuples $E_0, \ldots, E_n$ with $m_{E_i} = m_E$.
Consider the matrix $\bigl( \ip{E_i(x), E_j(x)} \bigr)_{i, j \leq n}$, which, by the previous claim, does not depend on $x$. It has rank at most $n$, so there exist constants $\lambda_0, \ldots, \lambda_n$ not all equal to $0$ such that for all $j$ and almost every $x$, $\sum_{j = 0}^n \lambda_i \ip{E_i(x), E_j(x)} = 0$. This implies that the function $\sum_i \lambda_i E_i \in L^2(X)^{\oplus n}$ is orthogonal to each $E_j$, so equal to $0$. Hence the $E_i$ are linearly dependent.
\end{proof}

If $\cX$ is a $\Gamma$-system, we will denote by $\Aut(\cX)$ the group of automorphisms of~$\cX$, i.e., the group of all automorphisms of the measure algebra that commute with the action of $\Gamma$. We equip $\Aut(\cX)$ with the pointwise convergence topology on $\cX$, which makes it into a topological group. $\Aut(\cX)$ can also be viewed as a closed subgroup of the unitary group $U(L^2(\cX))$ equipped with the strong operator topology. If $\cZ \sub \cX$ is a factor, we will denote
\begin{equation*}
  \Aut_\cZ(\cX) = \set{\sigma \in \Aut(\cX) : \sigma(a) = a \text{ for all } a \in \cZ}.
\end{equation*}
$\Aut_\cZ(\cX)$ is a closed subgroup of $\Aut(\cX)$.

\begin{prop}
  \label{p:compact-aut-group}
If $\cX \supseteq \cZ$ is a compact extension with $\cX$ ergodic, then $\Aut_\cZ(\cX)$ is compact.
\end{prop}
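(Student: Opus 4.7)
The plan is to embed $\Aut_\cZ(\cX)$ into a compact product of finite-dimensional balls (one per module of a fine enough decomposition) and then verify that the image is closed. Compactness of $\Aut_\cZ(\cX)$ follows.

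Using the assumption that $\cX \supseteq \cZ$ is a compact extension together with Proposition~\ref{p:exist-basis}, decompose $L^2(\cX) = \bigoplus_\alpha M_\alpha$ as an orthogonal sum of finitely generated $\Gamma$-invariant $\cZ$-modules, each admitting a finite $\cZ$-basis $E_\alpha = (e_{\alpha,0}, \ldots, e_{\alpha,n_\alpha-1})$ with entries in $L^\infty(\cX)$; arrange for $L^2(\cZ)$ to be one of the $M_\alpha$'s. Consider the continuous map
\begin{equation*}
  \Phi \colon \Aut_\cZ(\cX) \to \prod_\alpha L^2(\cX)^{\oplus n_\alpha}, \qquad \sigma \mapsto (\sigma(E_\alpha))_\alpha.
\end{equation*}
Since $\sigma$ commutes with $\Gamma$ and fixes $\cZ$ pointwise, it preserves the $\cZ$-conditional inner product ($\ip{\sigma f, \sigma g}_\cZ = \ip{f, g}_\cZ$), so $\sigma(E_\alpha)$ has the same matrix coefficients $m_{E_\alpha}$ as $E_\alpha$. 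By Lemma~\ref{l:dim-at-most-n}, $\sigma(E_\alpha)$ lies in a subspace $W_\alpha \sub L^2(\cX)^{\oplus n_\alpha}$ of dimension at most $n_\alpha$, and unitarity bounds its $L^2$-norm, so the image of $\Phi$ is contained in a compact product $\prod_\alpha K_\alpha$ of finite-dimensional balls.

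The map $\Phi$ is injective (the $E_\alpha$ generate a dense $L^\infty(\cZ)$-submodule and each $\sigma$ is $L^\infty(\cZ)$-linear), and its inverse is continuous on the image (SOT convergence on the $E_\alpha$ extends to all of $L^2(\cX)$ by density and uniform boundedness). It remains to show the image is closed. Given a convergent net $\Phi(\sigma_i) \to (F_\alpha)$, define $\sigma$ on each $M_\alpha$ as the $L^\infty(\cZ)$-linear extension of $E_\alpha \mapsto F_\alpha$; this is well-defined and isometric because the matrix coefficients of $F_\alpha$ agree with those of $E_\alpha$, and $\Gamma$-equivariant because $\gamma E_\alpha = u_{\gamma,\alpha} E_\alpha$ with $u_{\gamma,\alpha}$ a matrix over $L^\infty(\cZ)$, yielding $\gamma F_\alpha = u_{\gamma,\alpha} F_\alpha$ in the limit. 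Orthogonality between the $M_\alpha$'s passes to the limit, so $\sigma$ extends to a $\Gamma$-equivariant unitary of $L^2(\cX)$ acting trivially on $L^2(\cZ)$.

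The main obstacle is to upgrade $\sigma$ from a unitary operator to an automorphism of the measure algebra, i.e., to verify that $\sigma$ respects multiplication on $L^\infty(\cX)$. Here the $L^\infty$-bounds from Proposition~\ref{p:exist-basis} become essential: each $\sigma_i(e_{\alpha,j})$ has the same $L^\infty$-norm as $e_{\alpha,j}$, and the $L^\infty$-ball of that radius is closed in $L^2$, so $F_{\alpha,j}$ lies in $L^\infty(\cX)$ with the same bound. The identity $\sigma_i(ee') = \sigma_i(e)\sigma_i(e')$ then passes to the limit using that multiplication $L^\infty \times L^2 \to L^2$ is strongly continuous on $L^\infty$-bounded sets, and by density this yields multiplicativity of $\sigma$ on $L^\infty(\cX)$. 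Invoking the bi-interpretability of Remark~\ref{rem:Linfty}, $\sigma$ comes from an automorphism of $\cX$ lying in $\Aut_\cZ(\cX)$, and the proof concludes.
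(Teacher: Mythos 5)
Your overall strategy — use Lemma~\ref{l:dim-at-most-n} to confine the $\Aut_\cZ(\cX)$-orbit of each $\cZ$-basis $E_\alpha$ to a finite-dimensional subspace and thereby embed $\Aut_\cZ(\cX)$ into a compact product — is the same as the paper's. The paper's version is considerably shorter: it observes that consequently the orbit of every $f\in L^2(\cX)$ is finite-dimensional, so $\Aut_\cZ(\cX)$ sits inside the compact group $\prod_\beta U(V_\beta)$ for a suitable decomposition $L^2(\cX)=\bigoplus_\beta V_\beta$ into finite-dimensional $\Aut_\cZ(\cX)$-invariant subspaces, and then invokes the standard fact (already recorded in Section~\ref{sec:basic-facts-cpct-exts}) that $\Aut(\cX)$ is closed in $U(L^2(\cX))$ in the strong operator topology. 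You instead re-derive the closedness from scratch, and that is where a genuine gap appears.

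After building the limit map $\sigma$ on each $M_\alpha$ and checking it is isometric and $\Gamma$-equivariant, you assert that ``$\sigma$ extends to a $\Gamma$-equivariant \emph{unitary} of $L^2(\cX)$''. But a strong operator limit of unitaries is in general only an isometry; surjectivity is a separate matter and is exactly what can fail — indeed, if it were automatic, every ergodic system would be coalescent, contradicting the Parry--Walters example mentioned in the introduction. Your subsequent verification of multiplicativity on $L^\infty(\cX)$ only upgrades $\sigma$ to a trace-preserving, $\Gamma$-equivariant \emph{endomorphism} of the measure algebra fixing $\cZ$, still not obviously an automorphism, so the appeal to Remark~\ref{rem:Linfty} does not finish the job. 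Two ways to close the gap: (a) pass to a subnet along which $\Phi(\sigma_i^{-1})$ also converges in $\prod_\alpha K_\alpha$, build the corresponding isometry $\tau$, and use joint SOT-continuity of composition on norm-bounded sets to get $\sigma\tau=\tau\sigma=\mathrm{id}$; or (b) as in the paper, decompose $L^2(\cX)$ into $\Aut_\cZ(\cX)$-invariant finite-dimensional subspaces, so that the limit acts unitarily (hence surjectively) on each summand and is automatically unitary.
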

\begin{proof}
Let $M \sub L^2(\cX)$ be a finitely generated, $\Gamma$-invariant $\cZ$-module. Let $E=(e_1,\dots,e_n)$ be a $\cZ$-basis for $M$. Then, by Lemma~\ref{l:dim-at-most-n}, the orbit $\Aut_\cZ(\cX) \cdot E$ is contained in a finite-dimensional subspace of $L^2(\cX)^{\oplus n}$. This implies that the orbit of every $f \in L^2(\cX)$ is contained in a finite-dimensional subspace of $L^2(\cX)$, so $\Aut_{\cZ}(\cX)$, being a closed subgroup of a product of finite-dimensional unitary groups, is compact.
\end{proof}

By an isomorphism between $\Gamma$-invariant $\cZ$-modules we mean a linear isometry that commutes with the action of $\Gamma$ and multiplication by elements of $L^\infty(\cZ)$. Note that an isomorphism between $\cZ$-modules preserves the conditional inner product $\ip{\cdot, \cdot}_\cZ$. Note also that if $E$ is a $\cZ$-basis for a module $M$ and $E'$ is a $\cZ$-basis for a module $M'$, then there is an isomorphism between $M$ and $M'$ that maps $E$ to $E'$ iff $m_E = m_{E'}$. We have the following consequence of the previous lemma.

\begin{prop}
  \label{p:comp-ext-size-bound}
Suppose $\cX \supseteq \cZ$ is a compact extension with $\cX$ ergodic, and let $L^2(\cX)=\bigoplus_{i\in I}M_i$ be a decomposition into finitely generated, $\Gamma$-invariant $\cZ$-modules. Then, for each $i\in I$, the number of $\cZ$-modules appearing in the decomposition that are isomorphic to $M_i$ is bounded by the size of a $\cZ$-basis of $M_i$ and is therefore finite. In particular, $|\cX| \leq (|\cZ| + \aleph_0)^{\aleph_0}$.
\end{prop}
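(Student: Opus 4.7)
The plan is to apply Lemma~\ref{l:dim-at-most-n} to bound the multiplicity of each isomorphism class of $\cZ$-module appearing in the decomposition, and then to count the possible isomorphism classes by means of matrix coefficients. Fix $i \in I$ and let $E = (e_0, \ldots, e_{n-1})$ be a $\cZ$-basis for $M_i$, which exists by Proposition~\ref{p:exist-basis}. For every $j \in I$ such that $M_j$ is isomorphic to $M_i$, I would choose an isomorphism of $\cZ$-modules carrying $E$ to a $\cZ$-basis $E_j = (e^j_0, \ldots, e^j_{n-1})$ of $M_j$. Since any such isomorphism preserves the conditional inner product and commutes with $\Gamma$, the matrix coefficients satisfy $m_{E_j} = m_E$.

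Next, view each $E_j$ as an element of $L^2(\cX)^{\oplus n}$. The pairwise orthogonality $M_j \perp M_k$ in $L^2(\cX)$ forces distinct $E_j$ and $E_k$ to be orthogonal coordinatewise, and each $E_j$ is nonzero because its components have full $\cZ$-support. Lemma~\ref{l:dim-at-most-n} now confines all such $E_j$ to a subspace of $L^2(\cX)^{\oplus n}$ of dimension at most $n$, so there can be at most $n$ of them. This gives the desired multiplicity bound.

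For the density estimate, I would observe that a finitely generated $\Gamma$-invariant $\cZ$-module, together with a $\cZ$-basis of size $n$, is determined up to isomorphism by its matrix coefficient function $\Gamma \to M_n(L^\infty(\cZ))$. Since $|L^\infty(\cZ)| = |\cZ| + \aleph_0$ and $\Gamma$ is countable, the total number of such functions (over all finite $n$) is at most $(|\cZ| + \aleph_0)^{\aleph_0}$. Combining this count with the finite multiplicity bound from the first step and the fact that each finitely generated $\cZ$-module has density at most $|\cZ| + \aleph_0$, we obtain $|L^2(\cX)| \leq (|\cZ| + \aleph_0)^{\aleph_0}$, and therefore the same bound for $|\cX|$.

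The only step that requires a little care is verifying that a $\cZ$-module isomorphism $M_i \to M_j$ can be chosen to send a prescribed $\cZ$-basis of $M_i$ to a $\cZ$-basis of $M_j$ with identical matrix coefficients. However, this is immediate from the fact that isomorphisms preserve both the $L^\infty(\cZ)$-action and the conditional inner product, and it is recorded explicitly in the paragraph introducing matrix coefficients just before Lemma~\ref{l:dim-at-most-n}. Beyond this observation, the argument is essentially a direct application of that lemma together with the orthogonality built into the direct sum decomposition.
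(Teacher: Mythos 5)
Your argument is correct and follows essentially the same route as the paper: the multiplicity bound is Lemma~\ref{l:dim-at-most-n} applied to mutually orthogonal $\cZ$-bases arising from $\cZ$-module isomorphisms, and the density estimate counts isomorphism classes via matrix coefficient functions. One small imprecision: the \emph{set} cardinality of $L^\infty(\cZ)$ is $(|\cZ|+\aleph_0)^{\aleph_0}$, not $|\cZ|+\aleph_0$ (the latter is its density character in the $L^2$-metric); this does not affect the conclusion, since $((|\cZ|+\aleph_0)^{\aleph_0})^{\aleph_0} = (|\cZ|+\aleph_0)^{\aleph_0}$ still gives the required bound.
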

\begin{proof}
  Suppose for contradiction that $M_0, \ldots, M_n \sub L^2(\cX)$ are isomorphic invariant submodules of $L^2(\cX)$ that are pairwise orthogonal. Let $E \in L^2(\cX)^{\oplus n}$ be a $\cZ$-basis of $M_0$ and let $f_i \colon M_0 \to M_i$ be isomorphisms. Then $f_0(E), \ldots, f_n(E)$ are mutually orthogonal $\cZ$-bases of size $n$ that contradict Lemma~\ref{l:dim-at-most-n}.

  For the second assertion, denote $\kappa = |\cZ| + \aleph_0$ and observe that there are at most $\kappa^{\aleph_0}$ isomorphism classes of finitely generated, $\Gamma$-invariant $\cZ$-modules. Indeed, each such module is determined by its matrix coefficients $\ip{\gamma \cdot e_i, e_j}_\cZ \in L^\infty(\cZ)$ for some $\cZ$-basis $(e_0, \ldots, e_{n-1})$. As the cardinal of $L^\infty(\cZ)$ is~$\kappa^{\aleph_0}$, this proves the claim. As each module can appear only finitely many times in the decomposition of $L^2(\cX)$ and has density character $|\cZ|$, this shows that $|L^2(\cX)| \leq \kappa^{\aleph_0} \cdot \kappa = \kappa^{\aleph_0}$.
\end{proof}

We recall now that an extension is \emph{distal} if it can be obtained as a tower of compact extensions. More precisely, $\cZ\subseteq\cX$ is distal if there is an ordinal number $\alpha$ and intermediate extensions $\{\cX_\beta\}_{\beta\leq\alpha}$ such that
\begin{itemize}
\item $\cZ=\cX_0 \subseteq \cX_\beta \subseteq \cX_{\beta'}\subseteq \cX_\alpha=\cX$ for $\beta\leq\beta'\leq\alpha$,
\item each extension $\cX_\beta\subseteq\cX_{\beta+1}$ is compact,
\item if $\lambda\leq\alpha$ is a limit ordinal, then $\cX_\lambda$ is generated by  the factors $\cX_\beta$, $\beta<\lambda$.
\end{itemize}
We will say that $\{\cX_\beta\}_{\beta\leq\alpha}$ is a \emph{distal tower} for the extension $\cZ\subseteq\cX$. The minimal length $\alpha$ of such a tower is the \emph{rank} of the distal extension, which we denote by $\rk(\cX/\cZ)$. If $\cZ$ is the trivial system, then $\cX$ is a \emph{distal system}, and we denote the rank simply by $\rk(\cX)$.

As for compact extensions, we have the following.

\begin{prop}\label{p:factor-generated-by-distal-factors}
Let $\cX$ be an ergodic system and let $\cZ_i\subseteq\cY_i\subseteq \cX$, $i \in I$, be factors such that the extensions $\cZ_i\subseteq\cY_i$ are distal. Then the factor $\cY \sub \cX$ generated by all the $\cY_i$ is a distal extension of the factor $\cZ$ generated by all the $\cZ_i$. Moreover, $\rk(\cY/\cZ)\leq\sup_{i\in I}\rk(\cY_i/\cZ_i)$.

In particular, given an extension $\cZ\subseteq \cX$ with $\cX$ ergodic, there is a largest intermediate distal extension of $\cZ$ in $\cX$.
\end{prop}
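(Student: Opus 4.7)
The plan is to carry out a transfinite induction on the ordinal $\alpha = \sup_{i \in I} \rk(\cY_i/\cZ_i)$, reducing the statement to the compact case already handled by Proposition~\ref{p:factor-generated-by-compact-factors}. For each $i \in I$, I would fix a distal tower $\{\cY_{i,\beta}\}_{\beta \leq \rk(\cY_i/\cZ_i)}$ witnessing that $\cZ_i \subseteq \cY_i$ is distal, and extend it to a tower of length exactly $\alpha$ by setting $\cY_{i,\beta} = \cY_i$ for $\beta \geq \rk(\cY_i/\cZ_i)$. (Note that a constant stretch of a distal tower remains a distal tower, since $\cY_i \subseteq \cY_i$ is trivially compact.) Then I would define $\cX_\beta \subseteq \cX$ to be the factor generated by the family $\{\cY_{i,\beta}\}_{i \in I}$, so that $\cX_0 = \cZ$ and $\cX_\alpha = \cY$, and aim to show $\{\cX_\beta\}_{\beta \leq \alpha}$ is a distal tower.

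At a successor stage $\beta + 1 \leq \alpha$, each extension $\cY_{i,\beta} \subseteq \cY_{i,\beta+1}$ is compact by construction. Applying Proposition~\ref{p:factor-generated-by-compact-factors} to this family directly yields that the factor generated by $\{\cY_{i,\beta+1}\}_{i \in I}$ is a compact extension of the factor generated by $\{\cY_{i,\beta}\}_{i \in I}$, which is exactly the statement that $\cX_\beta \subseteq \cX_{\beta+1}$ is compact. At a limit stage $\lambda \leq \alpha$, each $\cY_{i,\lambda}$ is generated by $\{\cY_{i,\beta}\}_{\beta < \lambda}$, so the factor generated by $\{\cY_{i,\lambda}\}_{i \in I}$ coincides with the factor generated by the doubly-indexed family $\{\cY_{i,\beta} : i \in I,\ \beta < \lambda\}$, which in turn is the factor generated by $\{\cX_\beta\}_{\beta < \lambda}$. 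This establishes the tower property at limits and gives $\rk(\cY/\cZ) \leq \alpha$.

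For the "in particular" clause, I would consider the family $\mathcal{F}$ of all factors $\cW$ with $\cZ \subseteq \cW \subseteq \cX$ such that $\cZ \subseteq \cW$ is distal; this is a set because factors of $\cX$ form a set. Applying the first part to $\mathcal{F}$ (with the constant choice $\cZ_i = \cZ$) produces a single factor $\cW_\infty \subseteq \cX$ that is a distal extension of $\cZ$ and contains every member of $\mathcal{F}$, which is precisely the largest intermediate distal extension.

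I do not expect a significant obstacle here: the successor step is an immediate invocation of Proposition~\ref{p:factor-generated-by-compact-factors}, and the limit step is a purely formal identity about "factor generated by" commuting with unions of families. The only mildly delicate point is making sure the synchronised tower of length $\alpha$ is well-defined even when different $i$ have different ranks, which is handled by the constant extension trick above.
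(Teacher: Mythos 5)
Your proposal is correct and follows essentially the same approach as the paper: choose distal towers of length $\alpha = \sup_i \rk(\cY_i/\cZ_i)$ for each extension (the paper elides the "constant stretch" that you make explicit), take the factor generated at each level, and invoke Proposition~\ref{p:factor-generated-by-compact-factors} at successor steps. The limit-step observation and the derivation of the "in particular" clause also match the paper's intent.
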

\begin{proof}
Let $\alpha=\sup_{i\in I}\rk(\cY_i/\cZ_i)$ and, for each $i\in I$, choose a distal tower $\{\cY_{i,\beta}\}_{\beta\leq\alpha}$ for the extension $\cZ_i\subseteq \cY_i$. Given $\beta\leq\alpha$, let $\cY_\beta$ be the factor generated by all the $\cY_{i,\beta}$ for $i\in I$. Then, using Proposition~\ref{p:factor-generated-by-compact-factors}, it is clear that $\{\cY_\beta\}_{\beta\leq\alpha}$ is a distal tower for $\cZ\subseteq \cY$.
\end{proof}

\begin{cor}\label{c:bound-on-distal-rank}
If $\cZ\subseteq \cX$ is an ergodic, distal extension, then $\rk(\cX/\cZ)\leq (|\cZ|+\aleph_0)^+$.
\end{cor}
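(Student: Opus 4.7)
Plan: Let $\kappa = |\cZ| + \aleph_0$. The strategy is to reduce to singly-generated subextensions and apply Proposition~\ref{p:factor-generated-by-distal-factors}. For each $a \in \cX$, let $\cZ[a] \sub \cX$ denote the smallest factor of $\cX$ containing $\cZ$ and $a$ (equivalently, the closed $\Gamma$-invariant subalgebra generated by $\cZ$ and $\Gamma \cdot a$). Since $\Gamma$ is countable, $|\cZ[a]| \leq \kappa$. The appendix fact that the distal rank does not increase on factors, applied to the ambient distal extension $\cZ \sub \cX$, ensures that $\cZ \sub \cZ[a]$ is itself a distal extension.

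The next step bounds $\rho_a \coloneqq \rk(\cZ[a]/\cZ) < \kappa^+$ by a Hilbert-space dimension argument. I would work with the canonical distal tower $\{\cY_\beta\}_{\beta \leq \rho_a}$ for $\cZ \sub \cZ[a]$: take $\cY_0 = \cZ$, let $\cY_{\beta+1}$ be the maximal compact extension of $\cY_\beta$ in $\cZ[a]$ (Corollary~\ref{c:maximal-cpct-extension}), and take joins at limits. A standard induction using Proposition~\ref{p:factor-generated-by-compact-factors} shows that any other distal tower $\{\cY'_\gamma\}$ for $\cZ \sub \cZ[a]$ satisfies $\cY'_\gamma \sub \cY_\gamma$, so this tower reaches $\cZ[a]$ by stage $\rho_a$; and it is strictly increasing, since at any $\cY_\beta \subsetneq \cZ[a]$ the intermediate extension $\cY_\beta \sub \cZ[a]$ remains distal (a distal tower for it is $\{\cY_\beta \vee \cY'_\gamma\}_\gamma$, compact at each step by Proposition~\ref{p:factor-generated-by-compact-factors}), hence admits a non-trivial compact sub-extension. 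For each successor step pick a unit vector $e_\beta \in L^2(\cY_{\beta+1}) \ominus L^2(\cY_\beta)$: the $e_\beta$ form an orthonormal family in $L^2(\cZ[a])$ indexed by the successor ordinals in $[0,\rho_a]$, of cardinality $|\rho_a|$ when $\rho_a$ is infinite. Since $L^2(\cZ[a])$ has density at most $|\cZ[a]| \leq \kappa$, we obtain $|\rho_a| \leq \kappa$, hence $\rho_a < \kappa^+$.

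Finally, $\cX$ is clearly generated as a factor by the family $\{\cZ[a] : a \in \cX\}$, so Proposition~\ref{p:factor-generated-by-distal-factors} (with $\cZ_i = \cZ$ and $\cY_i = \cZ[a]$) yields $\rk(\cX/\cZ) \leq \sup_{a \in \cX} \rho_a \leq \kappa^+$, using that a supremum of ordinals each strictly less than $\kappa^+$ is at most $\kappa^+$. The main conceptual obstacle is the appeal to the appendix to ensure that $\cZ \sub \cZ[a]$ is distal—without it the family cannot be fed into Proposition~\ref{p:factor-generated-by-distal-factors}. An alternative inline argument would intersect a distal tower for $\cZ \sub \cX$ with $\cZ[a]$ and verify directly that each successor step projects to a compact extension, but this essentially reproduces the appendix content.
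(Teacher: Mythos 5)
Your strategy mirrors the paper's proof: reduce to the singly-generated factors $\cZ[a]$ for $a \in \cX$, bound $\rk(\cZ[a]/\cZ) < \kappa^+$ using the density bound $|\cZ[a]| \leq \kappa$, and then combine via Proposition~\ref{p:factor-generated-by-distal-factors}. The paper states the rank bound without elaboration; your canonical-tower-plus-orthonormal-family argument is the right way to spell it out and it is correct.

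There is, however, one misattribution worth flagging. You invoke the appendix result (Proposition~\ref{p:rank-of-factors}, ``the distal rank does not increase on factors'') to conclude that $\cZ \sub \cZ[a]$ is a distal extension. That proposition cannot give you this: its hypotheses include that $\cZ \sub \cY$ is \emph{already} distal, so it bounds the rank of a factor that is assumed distal; it does not establish distality. The result you actually need is Proposition~\ref{p:factor-of-distal-is-distal}, which says that intermediate factors of a distal extension are themselves distal. That proposition is stated after the present corollary in the paper, but there is no circularity: Lemma~\ref{l:transfer-lemma} and Proposition~\ref{p:factor-of-distal-is-distal} depend only on Propositions~\ref{p:factor-generated-by-compact-factors} and~\ref{p:factor-generated-by-distal-factors}, not on this corollary. (Indeed, the paper's own proof tacitly relies on the same fact when it writes $\rk(\cW/\cZ)$ as if it were defined.) With the citation corrected, your proof is complete and essentially coincides with the paper's, expanded.
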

\begin{proof}
It is enough to apply the previous proposition to the extensions $\cZ\subseteq \cW$ where $\cW$ varies over the factors of $\cX$ generated by $\cZ$ together with an additional element of $\cX$. Indeed, each such factor $\cW$ has density character at most $\kappa=|\cZ|+\aleph_0$, hence $\rk(\cW/\cZ) < \kappa^+$. On the other hand, these factors generate~$\cX$.
\end{proof}

A deeper result of the Furstenberg--Zimmer theory is that factors of distal extensions are again distal. We will need this fact for arbitrary (not necessarily separable) systems. The following lemma will be useful to transfer this result (and some others) from the standard theory to the non-separable setting.

\begin{lemma}\label{l:transfer-lemma}
Let $\cZ\subseteq\cX$ be a distal extension with $\cX$ ergodic and let $\cY$ be a separable factor of $\cX$. Then there are separable factors $\cZ' \subseteq \cY' \subseteq \cX$ with $\cZ' \subseteq \cZ$, $\cY \subseteq \cY'$ and such that the extension $\cZ' \subseteq \cY'$ is distal; moreover, $\rk(\cY'/\cZ')\leq \rk(\cX/\cZ)$.
\end{lemma}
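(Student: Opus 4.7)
The plan is to prove by transfinite induction on $\alpha=\rk(\cX/\cZ)$ the following slight strengthening: given additionally a separable factor $\cC\sub\cZ$, the conclusion can be arranged with $\cC\sub\cZ'$. The stated lemma is the case of trivial $\cC$, and the base case $\alpha=0$ is immediate by taking $\cZ'=\cY'=\cC\vee\cY$. Two preservation facts will be used throughout, both consequences of the module-decomposition machinery of Section~\ref{sec:basic-facts-cpct-exts}: if $\cA\sub\cB$ is a compact (resp.\ distal of rank $\rho$) extension and $\cA\sub\cD$ is any extension inside a common ambient ergodic system, then $\cD\sub\cD\vee\cB$ is again compact (resp.\ distal of rank at most $\rho$), obtained by joining $\cD$ to each level of the original tower.

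For the successor step $\alpha=\beta+1$, I would first establish a one-step separability result for a single compact extension: given a compact $\cZ_0\sub\cX_0$ with $\cX_0$ ergodic and a separable factor $\cY_0\sub\cX_0$, there exist separable $\cW\sub\cZ_0$ and $\cY_0\sub\cY_1\sub\cX_0$ with $\cW\sub\cY_1$ compact. This is the technical heart. Starting from a countable dense $D\sub\cY_0$ and a decomposition $L^2(\cX_0)=\boplus_{i\in I_0}M_i$ into finitely generated $\Gamma$-invariant $\cZ_0$-modules with $\cZ_0$-bases $E_i=(e_j^{(i)})_j$ (Proposition~\ref{p:exist-basis}), form a countable $I\sub I_0$ closed under the indices supporting projections of elements of $D$ and of products $e_j^{(i_1)}e_k^{(i_2)}$; then let $\cW$ be the separable subfactor of $\cZ_0$ generated by the matrix coefficients $m_{E_i}(\gamma)_{jk}$ (for $i\in I$, $\gamma\in\Gamma$), the structure constants $\ip{e_j^{(i_1)}e_k^{(i_2)},e_l^{(i_3)}}_{\cZ_0}$, and the Fourier coefficients $\ip{\bOne_a,e_j^{(i)}}_{\cZ_0}$ for $a\in D$, while $\cY_1$ is obtained by further adjoining the $e_j^{(i)}$ themselves. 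The $\Gamma$-invariance of $\cW$ follows from the cocycle identity $\gamma\bigl(m_E(\gamma')\bigr)=m_E(\gamma)^{*}m_E(\gamma\gamma')$. Applying this one-step result to $\cX_\beta\sub\cX$ with $\cY$, then the inductive hypothesis to $\cZ\sub\cX_\beta$ with prescribed factor $\cC$ and target $\cW$, produces separable $\cC\sub\cZ'\sub\cZ$ and $\cW\sub\cW'\sub\cX_\beta$ with $\cZ'\sub\cW'$ distal of rank $\leq\beta$; setting $\cY':=\cW'\vee\cY_1$ and invoking the preservation facts concludes.

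For the limit step $\alpha=\lambda$, the case of uncountable cofinality reduces immediately to smaller rank, since any separable factor of $\cX$ then embeds into some $\cX_\gamma$ with $\gamma<\lambda$ (only countably many ordinals are needed to approximate a countable dense set). The main obstacle is countable cofinality: no single $\cX_\beta$ with $\beta<\lambda$ contains $\cY$, so a coherent separable distal tower has to be woven through infinitely many levels. Fixing a cofinal sequence $\beta_n\nearrow\lambda$ and using $L^2$-martingale convergence of conditional expectations along the filtration $\{\cX_{\beta_n}\}_n$ applied to a countable $\Gamma$-invariant dense subset of $\cY$, one builds increasing separable $\Gamma$-invariant factors $\cY_n\sub\cX_{\beta_n}$ with $\cY\sub\overline{\bigcup_n\cY_n}$. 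Recursively applying the strengthened inductive hypothesis to $\cZ\sub\cX_{\beta_{n+1}}$, with prescribed factor $\cZ'_n$ and target $\cW_n\vee\cY_{n+1}$, yields nested separable $\cZ'_n\sub\cZ'_{n+1}\sub\cZ$ and $\cW_n\vee\cY_{n+1}\sub\cW_{n+1}\sub\cX_{\beta_{n+1}}$ with $\cZ'_{n+1}\sub\cW_{n+1}$ distal of rank $\leq\beta_{n+1}$. Setting $\cZ':=\overline{\bigcup_n\cZ'_n}$ and $\cY':=\overline{\bigcup_n\cW_n}$, each extension $\cZ'\sub\cZ'\vee\cW_n$ is distal of rank $\leq\beta_n$ by the base-joining preservation fact, and Proposition~\ref{p:factor-generated-by-distal-factors} then gives that the join $\cY'$ is distal over $\cZ'$ of rank $\leq\sup_n\beta_n=\lambda$. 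It is precisely this threading across infinitely many levels that forces the strengthening of the statement by a prescribed subfactor of $\cZ$.
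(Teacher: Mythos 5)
Your proof is correct and follows the same overall inductive strategy as the paper's: induction on $\rk(\cX/\cZ)$, with the successor step resting on a careful separable localization of a single compact extension (shrinking the base so that countably many modules remain $\Gamma$-invariant and absorb $\cY$, via matrix coefficients, structure constants, and Fourier coefficients of a dense set) and the limit step recombining countably many applications of the inductive hypothesis. Two comparative remarks. At the limit step, the paper simply asserts that countably many $\beta_n<\alpha$ can be chosen so that $\cY$ is generated by the factors $\cY\cap\cX_{\beta_n}$; that assertion is not obvious as stated (a separable factor need not split along a filtration in this way), and your martingale argument---constructing separable $\Gamma$-invariant $\cY_n\subseteq\cX_{\beta_n}$ from conditional expectations of a dense $\Gamma$-invariant sequence in $\cY$---is a cleaner and more robust way to justify the step. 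On the other hand, your strengthening of the inductive hypothesis by a prescribed $\cC\subseteq\cZ$, and the attendant nesting of the $\cZ'_n$ at limit stages, is superfluous: Proposition~\ref{p:factor-generated-by-distal-factors} applies to an arbitrary, not necessarily increasing, family of intermediate distal extensions $\cZ_n\subseteq\cW_n$, so one can apply the unstrengthened hypothesis independently at each $\beta_n$ and simply take $\cZ'=\bigvee_n\cZ_n$ and $\cY'=\bigvee_n\cW_n$; the case split on the cofinality of $\lambda$ is likewise dispensable.
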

\begin{proof}
We proceed by induction on the distal rank $\alpha=\rk(\cX/\cZ)$. Let $(\cX_\beta)_{\beta\leq\alpha}$ be a distal tower for the extension $\cZ\subseteq\cX$. In the base case $\alpha=0$, we take $\cZ'=\cY'=\cY$.

Suppose next that $\alpha=\beta+1$. There are finitely generated $\Gamma$-invariant $\cX_\beta$-modules $M_i$ such that $L^2(\cX_\alpha)=\bigoplus_{i\in I} M_i$ and we can choose a countable subset $J\subseteq I$ such that $\bigoplus_{i\in J}M_i$ contains $L^2(\cY)$. Fix some $M_i$ and a corresponding $\cX_\beta$-basis $e_1,\dots,e_n$. Since $\Gamma$ is countable and $\cY$ is separable, we can find a separable factor $\cW \subseteq \cX_\beta$ such that the $\cW$-module $M'_i$ generated by $e_1,\dots,e_n$ is $\Gamma$-invariant and contains the intersection $L^2(\cY)\cap M_i$; moreover, since $J$ is countable, we can choose $\cW$ which works for every $i\in J$. Then we consider the sum $M=\bigoplus_{i\in J} M'_i$. By adding all the product modules (as defined in the proof of Proposition~\ref{p:factor-generated-by-compact-factors}) if necessary, we may assume that $M$ defines a separable factor $\cY_0\subseteq\cX$ (i.e., $M=L^2(\cY_0)$) which contains $\cY$ and is a compact extension of $\cW$. Now we can apply the inductive hypothesis to get a distal extension $\cZ'\subseteq \cY_1$ of separable factors of $\cX_\beta$ such that $\cZ'\subseteq \cZ$ and $\cW \subseteq \cY_1$, and moreover $\rk(\cY_1/\cZ')\leq \beta$. Let $\cY'$ be the factor generated by $\cY_0$ and $\cY_1$. Then $\cY'$ is separable, contains $\cY$, and is a compact extension of $\cY_1$, hence also a distal extension of $\cZ'$ with $\rk(\cY'/\cZ')\leq \alpha$. The pair $\cZ',\cY'$ is as required.

Finally, if $\alpha$ is a limit ordinal, we can choose countably many $\beta_n<\alpha$ such that $\cY$ is generated by the factors $\cY\cap\cX_{\beta_n}$. By the inductive hypothesis, we have separable distal extensions $\cZ_n \subseteq\cY_n$, of rank less than $\alpha$, with $\cZ_n\subseteq \cZ$ and $\cY\cap\cX_{\beta_n} \subseteq \cY_n$. If $\cZ'$ and $\cY'$ are the factors generated respectively by the $\cZ_n$ and the $\cY_n$, then they are separable, $\cZ'\subseteq\cZ$, $\cY\subseteq\cY'$, and the extension $\cZ'\subseteq\cY'$ is distal of rank at most $\alpha$ by Proposition~\ref{p:factor-generated-by-distal-factors}.
\end{proof}

The following proposition states that factors of distal systems are distal; see also Proposition~\ref{p:rank-of-factors} for a refinement.
\begin{prop}\label{p:factor-of-distal-is-distal}
Suppose $\cZ\subseteq \cY\subseteq \cX$ and $\cX$ is ergodic. Then, the extension $\cZ\subseteq \cX$ is distal if and only if the extensions $\cZ\subseteq \cY$ and $\cY\subseteq\cX$ are distal.
\end{prop}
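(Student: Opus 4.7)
The plan is to prove the two directions separately. The $(\Leftarrow)$ direction is immediate: concatenating a distal tower for $\cZ\subseteq\cY$ with one for $\cY\subseteq\cX$ yields a distal tower for $\cZ\subseteq\cX$. For the $(\Rightarrow)$ direction, I will treat the two sub-extensions $\cY\subseteq\cX$ and $\cZ\subseteq\cY$ separately, with the latter being the delicate one.

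For $\cY\subseteq\cX$, the idea is direct. Given a distal tower $(\cX_\beta)_{\beta\leq\alpha}$ for $\cZ\subseteq\cX$, I will show that $(\cY\vee\cX_\beta)_{\beta\leq\alpha}$ is a distal tower from $\cY=\cY\vee\cZ$ to $\cY\vee\cX=\cX$. At a successor step, compactness of $\cY\vee\cX_\beta\subseteq\cY\vee\cX_{\beta+1}$ will follow from Proposition~\ref{p:factor-generated-by-compact-factors}, applied to the trivial extension $\cY\subseteq\cY$ and the compact extension $\cX_\beta\subseteq\cX_{\beta+1}$. At limit ordinals, $\cY\vee\cX_\lambda$ is manifestly generated by the factors $\cY\vee\cX_\beta$ with $\beta<\lambda$.

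The distality of $\cZ\subseteq\cY$ is where the real work lies, since neither $\cZ$ nor $\cY$ is assumed separable and classical Furstenberg--Zimmer arguments require separability. My plan is to use Proposition~\ref{p:factor-generated-by-distal-factors} to reduce to showing that every separable sub-factor $\cY_0\subseteq\cY$ sits inside some distal extension of $\cZ$ living in $\cY$. Given such a $\cY_0$, I will apply Lemma~\ref{l:transfer-lemma} to the distal extension $\cZ\subseteq\cX$ and the separable factor $\cY_0\subseteq\cX$, producing separable factors $\cZ'\subseteq\cY'\subseteq\cX$ with $\cZ'\subseteq\cZ$, $\cY_0\subseteq\cY'$, and $\cZ'\subseteq\cY'$ distal. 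Since $\cZ'\subseteq\cY'$ is a distal extension \emph{between separable systems}, the classical Furstenberg--Zimmer theorem applies and yields that the intermediate factor $\cZ'\subseteq\cZ'\vee\cY_0$ is itself distal. Then, given any distal tower $(\cW_\gamma)_{\gamma\leq\delta}$ for $\cZ'\subseteq\cZ'\vee\cY_0$, the tower $(\cZ\vee\cW_\gamma)_{\gamma\leq\delta}$ will be a distal tower for $\cZ=\cZ\vee\cZ'\subseteq\cZ\vee\cY_0$, with compactness at successor steps again supplied by Proposition~\ref{p:factor-generated-by-compact-factors} (applied this time with the trivial extension $\cZ\subseteq\cZ$). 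Since $\cZ,\cY_0\subseteq\cY$, the resulting distal extension $\cZ\subseteq\cZ\vee\cY_0$ lies inside $\cY$ and contains $\cY_0$, as required.

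The main obstacle is the second part: we cannot manipulate towers inside the possibly non-separable $\cY$ directly, so the transfer lemma is essential as the bridge to the classical, separable Furstenberg--Zimmer theorem. The only mildly subtle bookkeeping is to note that the inclusions $\cW_\gamma\subseteq\cZ'\vee\cY_0\subseteq\cX$ ensure all terms $\cZ\vee\cW_\gamma$ remain inside $\cY$, so we stay within the ergodic ambient system required by Proposition~\ref{p:factor-generated-by-compact-factors}.
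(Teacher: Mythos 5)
Your argument is correct and follows essentially the same route as the paper: for the delicate direction $\cZ\subseteq\cY$ distal, both proofs invoke Lemma~\ref{l:transfer-lemma} to pass to separable $\cZ'\subseteq\cY'$, apply the classical separable Furstenberg--Zimmer result (the paper cites \cite{Glasner2003}*{Theorem~10.38}) to conclude $\cZ'\subseteq\cZ'\cY_0$ is distal, and then lift back to $\cZ\subseteq\cZ\cY_0$ and finish with Proposition~\ref{p:factor-generated-by-distal-factors}. Your explicit tower constructions for $\cY\subseteq\cX$ and for lifting $(\cW_\gamma)$ to $(\cZ\vee\cW_\gamma)$ merely inline instances of Proposition~\ref{p:factor-generated-by-distal-factors} that the paper applies directly, so the content is the same.
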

\begin{proof}
The only difficulty is in proving that if $\cZ\subseteq \cX$ is distal, then so is $\cZ\subseteq \cY$. So assume the former extension is distal.

If $\cX_1$ and $\cX_2$ are factors of $\cX$, we will denote by $\cX_1\cX_2$ the factor generated by $\cX_1$ and $\cX_2$. We prove first that if $\cW\subseteq\cX$ is a separable factor, then $\cZ\cW$ is a distal extension of $\cZ$. By Lemma~\ref{l:transfer-lemma}, there are separable factors $\cZ' \subseteq \cW' \subseteq \cX$ with $\cZ' \subseteq \cZ$, $\cW \subseteq \cW'$ such that the extension $\cZ' \subseteq \cW'$ is distal. Then, by \cite{Glasner2003}*{Theorem~10.38}, $\cZ'\cW$ is also a distal extension of $\cZ'$. Now applying Proposition~\ref{p:factor-generated-by-distal-factors} to the two distal extensions $\cZ'\cW \supseteq \cZ'$ and $\cZ \supseteq \cZ$ yields that the extension $\cZ\cW \supseteq \cZ$ is distal.

Hence, since for each separable factor $\cW$ of $\cY$ the extension $\cZ\subseteq \cZ\cW$ is distal, the extension $\cZ\subseteq \cY$ is distal as well, again by Proposition~\ref{p:factor-generated-by-distal-factors}.
\end{proof}

We end this section with a well-known amalgamation result which is usually stated and proved for separable systems in the literature.
\begin{prop}
  \label{p:ergodic-joinings}
  Let $i_1 \colon \cZ \to \cX_1$ and $i_2 \colon \cZ \to \cX_2$ be embeddings of ergodic systems. Then there exists an ergodic system $\cY$ and embeddings $j_1 \colon \cX_1 \to \cY$ and $j_2 \colon \cX_2 \to \cY$ such that $j_1 \circ i_1 = j_2 \circ i_2$. If $\cY$ is moreover generated by $j_1(\cX_1)$ and $j_2(\cX_2)$, we say it is an \df{ergodic joining of $\cX_1$ and $\cX_2$ over $\cZ$}.
\end{prop}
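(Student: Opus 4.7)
The plan is to realize joinings as $\Gamma$-invariant tracial states on an amalgamated tensor product algebra and extract an ergodic one by a Krein--Milman argument; this circumvents ergodic decomposition, which is unavailable in the non-separable setting.

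First I would define the \emph{relatively independent joining} to witness that the set of joinings is nonempty. Let $A$ denote the algebraic tensor product $L^\infty(\cX_1) \odot_{L^\infty(\cZ)} L^\infty(\cX_2)$ amalgamated over $L^\infty(\cZ)$ (identifying $i_1(z) \otimes 1$ with $1 \otimes i_2(z)$ for $z \in L^\infty(\cZ)$), viewed as a unital $*$-algebra with the diagonal $\Gamma$-action. Let $K$ be the set of $\Gamma$-invariant tracial states $\lambda$ on $A$ satisfying $\lambda(f \otimes 1) = \int f \, d\mu_1$ and $\lambda(1 \otimes g) = \int g \, d\mu_2$. The formula $\lambda_0(f \otimes g) = \int_\cZ \mbE(f|\cZ) \mbE(g|\cZ) \, d\mu_\cZ$ defines an element of $K$, and $K$ is convex and weak-$*$ compact as a closed subset of the state space of $A$. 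Applying Krein--Milman yields an extreme point $\lambda \in K$, from which the GNS construction produces a tracial von Neumann algebra $M_\lambda$ whose projection lattice is a measure algebra $\cY$ equipped with a $\Gamma$-action and embeddings $j_1, j_2$ with $j_1 i_1 = j_2 i_2$, such that $\cY$ is generated by $j_1(\cX_1) \cup j_2(\cX_2)$.

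The heart of the argument is the equivalence between extremality of $\lambda$ in $K$ and ergodicity of $\cY$. For one direction, suppose $\lambda$ is not ergodic and pick a $\Gamma$-invariant projection $p \in M_\lambda$ with $0 < \lambda(p) < 1$. The conditioned states $\lambda_p(\,\cdot\,) = \lambda(\,\cdot\,p)/\lambda(p)$ and $\lambda_{1-p}$ are $\Gamma$-invariant, and the crucial point is to verify they still belong to $K$. This reduces to computing $\lambda_p(f \otimes 1) = \int f \, \mbE(p | \cX_1) \, d\mu_1 / \lambda(p)$; the conditional expectation $\mbE(p | \cX_1)$ is $\Gamma$-invariant in $L^\infty(\cX_1)$ and thus constant equal to $\lambda(p)$ by ergodicity of $\cX_1$, so $\lambda_p(f \otimes 1) = \int f \, d\mu_1$, and symmetrically for $\cX_2$. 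Hence $\lambda = \lambda(p)\lambda_p + \lambda(1-p)\lambda_{1-p}$ is a nontrivial convex decomposition, contradicting extremality. For the reverse direction, a decomposition $\lambda = \alpha\lambda_1 + (1-\alpha)\lambda_2$ with $\lambda_i \in K$ gives by noncommutative Radon--Nikodym a $\Gamma$-invariant density $h \in M_\lambda$ with $\lambda_1 = \lambda(\,\cdot\, h)$; ergodicity forces $h$ constant, and the marginal condition forces $h = 1$.

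The main obstacle is carrying out these operator-algebraic manipulations inside $A$ and $M_\lambda$ without any appeal to disintegration of measures, since separability is unavailable. Once extremality and ergodicity are shown equivalent, Krein--Milman immediately delivers the desired ergodic joining.
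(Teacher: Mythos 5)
Your proof is correct and follows essentially the same path as the paper's: nonemptiness via the relatively independent joining, Krein--Milman to extract an extreme point, and ergodicity of $\cX_1, \cX_2$ to upgrade extremality of that point to ergodicity of the joining. Passing to tracial states on the amalgamated $*$-algebra rather than Radon measures on the product of Stone spaces is precisely the duality the paper invokes in Remark~\ref{rem:Linfty}, and your direct "invariant projection $\Rightarrow$ nontrivial decomposition of $\lambda$ in $K$" step is the contrapositive of the paper's "any decomposition of $\lambda_0$ in $P_\Gamma(X_1\times X_2)$ stays in $J_\cZ$", so the two arguments coincide up to this translation.
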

\begin{proof}
  For simplicity of notation, we will identify $\cZ$ with its images $i_1(\cZ) \sub \cX_1$ and $i_2(\cZ) \sub \cX_2$. Let $X_1$ and $X_2$ be the Stone spaces of the Boolean algebras $\cX_1$ and $\cX_2$ and let $\mu_1$ and $\mu_2$ be the corresponding measures on $X_1$ and $X_2$. We identify $\cX_i$ with the algebra of clopen sets of $X_i$. Denote by $\pi_i \colon X_1 \times X_2 \to X_i$ the projection maps.
  
  Let $P_\Gamma(X_1 \times X_2)$ denote the compact, convex set of Radon probability measures on $X_1 \times X_2$ that are invariant under the diagonal action of $\Gamma$. We consider
  \begin{multline*}
    J_\cZ(\cX_1, \cX_2) \coloneqq \set[\big]{\nu \in P_\Gamma(X_1 \times X_2) : (\pi_1)_* \nu = \mu_1, (\pi_2)_*\nu = \mu_2 \And \\
      \nu\big( (a \times X_2) \sdiff (X_1 \times a) \big) = 0 \text{ for all } a \in \cZ},
  \end{multline*}
  the set of \emph{joinings of $\cX_1$ and $\cX_2$ over $\cZ$}, which is still compact and convex. Next we check that it is non-empty.
  For $a_1\in \cX_1$, $a_2\in\cX_2$, we define
  \begin{equation*}
  \lambda(a_1 \times a_2) \coloneqq \int_Z \mu_1(a_1|\cZ) \mu_2(a_2|\cZ) \ud z.
\end{equation*}
It is easy to see that $\lambda$ extends to a finitely additive $\Gamma$-invariant measure on the Boolean algebra generated by the clopen rectangles. Thus $\lambda$ defines a $\Gamma$-invariant linear functional on the dense subspace of $C(X_1 \times X_2)$ consisting of step functions over clopen rectangles. As it is also continuous for the $\sup$ norm, it extends to a linear functional on all of $C(X_1 \times X_2)$, i.e., to a Radon measure on $X_1\times X_2$. One readily checks that this measure belongs to $J_{\cZ}(\cX_1, \cX_2)$.

  By the Krein--Milman theorem, there exists an extreme point $\lambda_0$ of $J_\cZ(\cX_1, \cX_2)$. We check that the system $\Gamma\actson (X_1 \times X_2, \cB, \lambda_0)$, where $\cB$ is the Borel $\sigma$-algebra, is ergodic. For this, it is enough to see that $\lambda_0$ is an extreme point of $P_\Gamma(X_1 \times X_2)$.
Suppose that $\lambda_0 = t \lambda_1 + (1-t) \lambda_2$ with $\lambda_1, \lambda_2 \in P_\Gamma(X_1 \times X_2)$ and $0 < t < 1$. Then $\mu_1 = (\pi_1)_*\lambda_0 = t (\pi_1)_*\lambda_1 + (1-t) (\pi_1)_*\lambda_2$, which by the ergodicity of $\mu_1$ implies that $(\pi_1)_*\lambda_1 = (\pi_1)_* \lambda_2 = \mu_1$; similarly, $(\pi_2)_*\lambda_1 = (\pi_2)_* \lambda_2 = \mu_2$. The last condition in the definition of $J_\cZ(\cX_1, \cX_2)$ is obviously satisfied because of positivity.
\end{proof}

The pmp system induced by the action of $\Gamma$ on $(X_1\times X_2,\cB,\lambda)$, where $\lambda$ is as in the proof of the previous proposition, is called the \df{relatively independent joining of $\cX_1$ and $\cX_2$ over $\cZ$}, and is denoted by $\cX_1\otimes_\cZ\cX_2$.

%%%%%%%%%%%%%%%%%%%%%%%%%%%%%%%%%%%%%%%%%%%%%%%%%%

\section{Universal ergodic distal systems}
\label{sec:universal-distal-systems}

Next we construct the largest ergodic compact extension and the largest ergodic distal extension of an ergodic system. We start by noting that we can amalgamate ergodic compact extensions.

\begin{lemma}
  \label{l:compact-ext-prop}
  Let $\cZ$ be an ergodic system. Then the family of compact, ergodic extensions of $\cZ$ has the following properties:
  \begin{enumerate}
  \item \label{i:l:compact-ext-prop:1} It is directed: if $i_1 \colon \cZ \to \cX_1$ and $i_2 \colon \cZ \to \cX_2$ are compact, ergodic extensions of $\cZ$, then there exists a compact, ergodic extension $j \colon \cZ \to \cX_3$ and embeddings $j_1 \colon \cX_1 \to \cX_3$ and $j_2 \colon \cX_2 \to \cX_3$ such that $j_1 \circ i_1 = j_2 \circ i_2$.
    
  \item \label{i:l:compact-ext-prop:2} It is closed under direct limits.
  \end{enumerate}
\end{lemma}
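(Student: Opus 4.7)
The plan is to reduce both items to Proposition~\ref{p:factor-generated-by-compact-factors}, using Proposition~\ref{p:ergodic-joinings} for (i) and a direct-limit construction for (ii).

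For (i), I would apply Proposition~\ref{p:ergodic-joinings} to the two embeddings $i_k \colon \cZ \to \cX_k$ to obtain an ergodic joining $\cX_3$ of $\cX_1$ and $\cX_2$ over $\cZ$, i.e., an ergodic system $\cX_3$ together with embeddings $j_k \colon \cX_k \to \cX_3$ satisfying $j_1 \circ i_1 = j_2 \circ i_2$ and generated by $j_1(\cX_1) \cup j_2(\cX_2)$. Since $\cX_3$ is ergodic and is generated by its two factors $j_k(\cX_k)$, each of which is a compact extension of (the common image of) $\cZ$, Proposition~\ref{p:factor-generated-by-compact-factors} immediately yields that $\cZ \sub \cX_3$ is compact.

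For (ii), given a directed system $(\cX_i)_{i \in I}$ of compact ergodic extensions of $\cZ$ with compatible embeddings, I would form the direct limit $\cX$ in the category of pmp $\Gamma$-systems --- concretely, as the metric completion of $\bigcup_i \cX_i$ endowed with the natural extension of the $\Gamma$-action and the induced measure --- so that $\cX$ is generated by the factors $\cX_i$. To apply Proposition~\ref{p:factor-generated-by-compact-factors}, one must first check that $\cX$ is ergodic. If $a \in \cX$ is $\Gamma$-invariant, then for each $i$ the conditional expectation $\mbE(a | \cX_i)$ is also $\Gamma$-invariant, and hence equals the constant $\mu(a)$ by ergodicity of $\cX_i$. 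Since $\bigcup_i L^2(\cX_i)$ is dense in $L^2(\cX)$, these conditional expectations converge to $a$ in $L^2$, forcing $a = \mu(a)$ and thus $\mu(a) \in \set{0, 1}$. Proposition~\ref{p:factor-generated-by-compact-factors} now gives that $\cZ \sub \cX$ is compact.

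I do not foresee a serious obstacle, as the heavy machinery is already packaged in Propositions~\ref{p:ergodic-joinings} and~\ref{p:factor-generated-by-compact-factors}; the only mildly subtle point is the ergodicity of the direct limit, which the standard martingale argument above handles cleanly.
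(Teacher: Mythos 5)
Your proposal is correct and follows essentially the same route as the paper: item (i) is handled by combining Proposition~\ref{p:ergodic-joinings} with Proposition~\ref{p:factor-generated-by-compact-factors}, and item (ii) by forming the direct limit, verifying ergodicity via projections to $L^2(\cX_i)$ (your conditional-expectation phrasing is the same argument as the paper's ``zero-mean invariant vectors'' version), and then invoking Proposition~\ref{p:factor-generated-by-compact-factors}.
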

\begin{proof}
  \ref{i:l:compact-ext-prop:1} This follows from Propositions \ref{p:ergodic-joinings} and \ref{p:factor-generated-by-compact-factors}.

  \ref{i:l:compact-ext-prop:2} Let $\cX=\varinjlim \cX_\alpha$ be a direct limit of ergodic compact extensions of~$\cZ$, and let us identify the factors $\cX_\alpha$ with subsets of $\cX$. Then $\cX$ is generated by the $\cX_\alpha$, hence is also a compact extension of $\cZ$. Ergodicity of $\cX$ follows readily from the consideration of the zero-mean invariant vectors in $L^2(\cX)$: their projection on each $L^2(\cX_\alpha)$ is invariant, thus equal to zero; since the spaces $L^2(\cX_\alpha)$ generate $L^2(\cX)$, this is enough.
\end{proof}

Next we show how to amalgamate the set of \emph{all} ergodic compact extensions of a given system $\cZ$. If $\cZ$ is the trivial one-point system, this construction yields the well-known Bohr compactification of the group. We are grateful to Ehud Hrushovski for supplying the simple uniqueness argument below, which is more general than the one we originally had.

A system $\cX$ is said to be \emph{coalescent over a factor $\cZ$} if every $\cZ$-embedding $\cX \to \cX$ (i.e., a self-embedding fixing $\cZ$ pointwise) is an isomorphism.

\begin{theorem}
  \label{th:absolute-max-compact}
  Let $\cZ$ be any ergodic $\Gamma$-system. Then there exists a unique compact, ergodic extension $i_0 \colon \cZ \to \widehat{\cZ}$ with the following property: if $i \colon \cZ \to \cX$ is any compact, ergodic extension, then there exists an embedding $j \colon \cX \to \widehat{\cZ}$ such that $i_0 = j \circ i$. Moreover $|\widehat \cZ| \leq (|\cZ| + \aleph_0)^{\aleph_0}$ and $\widehat \cZ$ is coalescent over $\cZ$.
\end{theorem}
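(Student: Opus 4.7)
The plan is to build $\widehat\cZ$ by transfinite amalgamation of all compact ergodic extensions of $\cZ$, then to prove coalescence using the structure of $L^2(\widehat\cZ)$ as a Hilbert $L^\infty(\cZ)$-module, and finally to deduce uniqueness formally from coalescence.

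\textbf{Existence.} By Proposition~\ref{p:comp-ext-size-bound}, every compact, ergodic extension of $\cZ$ has density at most $\kappa := (|\cZ|+\aleph_0)^{\aleph_0}$, so the $\cZ$-isomorphism classes of such extensions form a set. Fix an enumeration $(\cX_\alpha)_{\alpha<\lambda}$ of representatives and construct a chain $(\cY_\alpha)_{\alpha\le\lambda}$ of compact, ergodic extensions of $\cZ$ by setting $\cY_0 := \cZ$, at successor stages choosing $\cY_{\alpha+1}$ to be an ergodic compact amalgamation of $\cY_\alpha$ and $\cX_\alpha$ over $\cZ$ (Lemma~\ref{l:compact-ext-prop}\ref{i:l:compact-ext-prop:1}), and at limit stages taking direct limits (Lemma~\ref{l:compact-ext-prop}\ref{i:l:compact-ext-prop:2}). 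Put $\widehat\cZ := \cY_\lambda$: every $\cX_\alpha$ embeds into $\widehat\cZ$ over $\cZ$, hence so does every compact ergodic extension of $\cZ$, and a final application of Proposition~\ref{p:comp-ext-size-bound} to $\widehat\cZ$ itself yields $|\widehat\cZ|\le\kappa$.

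\textbf{Coalescence.} Let $\sigma\colon\widehat\cZ\to\widehat\cZ$ be a $\cZ$-embedding, inducing a $\Gamma$-equivariant, $L^\infty(\cZ)$-linear isometry $U_\sigma$ on $L^2(\widehat\cZ)$. For each isomorphism type $\tau$ of finitely generated $\Gamma$-invariant $\cZ$-modules, let $V_\tau\sub L^2(\widehat\cZ)$ be the closed $L^\infty(\cZ)$-linear span of all submodules of type $\tau$. Since $U_\sigma$ carries type-$\tau$ submodules to type-$\tau$ submodules, $U_\sigma(V_\tau)\sub V_\tau$. Lemma~\ref{l:dim-at-most-n} bounds the $\C$-span of the $\cZ$-bases of all type-$\tau$ submodules, and a $\cZ$-Gram--Schmidt as in the proof of Proposition~\ref{p:exist-basis} turns this into a finite $\cZ$-basis $(e_1,\dotsc,e_N)$ of $V_\tau$ with full $\cZ$-support. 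Expanding $U_\sigma(e_i) = \sum_j \alpha_{ji}e_j$ via Lemma~\ref{l:inner-prod-Z}\ref{i:ip:4} produces a matrix $A = (\alpha_{ji})\in M_N(L^\infty(\cZ))$ satisfying $A^*A=I$; read pointwise in $\cZ$, the matrix $A(z)$ is then an $N\times N$ unitary and hence invertible, so $A$ is invertible in $M_N(L^\infty(\cZ))$ and $U_\sigma|_{V_\tau}$ is surjective. A decomposition of $L^2(\widehat\cZ)$ into finitely generated $\Gamma$-invariant $\cZ$-modules shows that the $V_\tau$ collectively span $L^2(\widehat\cZ)$, so $U_\sigma$ is surjective and $\sigma$ is a $\cZ$-automorphism.

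\textbf{Uniqueness.} If $\widehat\cZ'$ satisfies the same universal property, universality supplies $\cZ$-embeddings $\phi\colon\widehat\cZ\to\widehat\cZ'$ and $\psi\colon\widehat\cZ'\to\widehat\cZ$; coalescence applied on both sides turns $\psi\circ\phi$ and $\phi\circ\psi$ into automorphisms, so $\phi$ and $\psi$ are $\cZ$-isomorphisms. The main obstacle is the coalescence step, where one has to check that each $V_\tau$ is well-defined independently of any chosen decomposition and admits a finite $\cZ$-basis; both points are controlled by Lemma~\ref{l:dim-at-most-n} and Proposition~\ref{p:comp-ext-size-bound}, after which the pointwise unitarity of the finite matrix $A$ makes the rest elementary.
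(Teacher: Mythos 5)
Your existence and uniqueness steps coincide with the paper's. The coalescence step, however, is a genuinely different argument from the one the paper actually prints. The paper uses an abstract chain argument (attributed to Hrushovski): starting from a non-surjective $\cZ$-embedding $\phi\colon\widehat\cZ\to\widehat\cZ$, one iterates $\phi$ transfinitely for $\kappa^+$ steps (taking direct limits at limit ordinals and using the universal property to re-embed into $\widehat\cZ$), and observes that the resulting compact ergodic extension of $\cZ$ has density $\kappa^+ > \kappa = |\widehat\cZ|$, contradicting universality. Your argument instead works directly with the module structure: for each isomorphism type $\tau$ of finitely generated $\Gamma$-invariant $\cZ$-modules, Lemma~\ref{l:dim-at-most-n} makes the isotypic component $V_\tau$ a \emph{finitely} generated $\cZ$-module with a $\cZ$-basis, the embedding's matrix $A$ relative to that basis satisfies $A^*A = I$ in $M_N(L^\infty(\cZ))$, and since square isometries over $\C$ are unitaries, $A$ is invertible in $M_N(L^\infty(\cZ))$, making $U_\sigma|_{V_\tau}$ surjective; as the $V_\tau$ exhaust $L^2(\widehat\cZ)$ and isometries have closed range, $\sigma$ is surjective. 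Your route is more concrete, gives structural insight into \emph{why} coalescence holds (it is essentially the classical proof that compact ergodic systems are coalescent, promoted to relative modules), and actually establishes the stronger fact that \emph{every} ergodic compact extension is coalescent over its base, not just the universal one $\widehat\cZ$. The paper's argument is shorter, more abstract, and requires only the density bound and the universal property; on the other hand, it works only for the universal object. Both are correct.

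One small point to be explicit about in your write-up: when you invoke Lemma~\ref{l:dim-at-most-n} to conclude $V_\tau$ is finitely generated, you should note that $V_\tau$ is the $\cZ$-module generated by the coordinate projections of a $\C$-basis of the finite-dimensional space $W = \operatorname{span}\{E' : m_{E'}=m_E\}\subseteq L^2(\widehat\cZ)^{\oplus n}$, since every $\cZ$-basis of a type-$\tau$ submodule with matrix coefficients $m_E$ lies in $W$; this is what makes $V_\tau$ finitely generated before Proposition~\ref{p:exist-basis} can be applied to produce a genuine $\cZ$-basis.
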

\begin{proof}
  First we show existence. By Proposition~\ref{p:comp-ext-size-bound}, there exists a set $\set{\cX_\alpha : \alpha < \kappa}$ of representatives of all $\cZ$-isomorphism classes of compact, ergodic extensions of $\cZ$. We use Lemma~\ref{l:compact-ext-prop} to define another sequence $(\cX'_\alpha)_{\alpha \leq \kappa}$ of ergodic compact extensions of $\cZ$ with the property that $\cX_\alpha \sub \cX'_{\alpha}$ and $\cX'_\alpha \subseteq \cX'_{\alpha'}$ for $\alpha < \alpha' \leq \kappa$. We set $\cX'_0=\cX_0$. For $\cX'_{\alpha+1}$ we take any amalgam of $\cX_\alpha$ and $\cX'_\alpha$ over $\cZ$. Finally, if $\lambda \leq \kappa$ is a limit ordinal, we let $\cX'_\lambda$ be the direct limit of the $\cX'_\alpha$ for $\alpha < \lambda$. The last system $\widehat \cZ \coloneqq \cX'_\kappa$ is an ergodic compact extension of $\cZ$ that contains any other such extension as a factor. The bound on the density of $\widehat \cZ$ follows from Proposition~\ref{p:comp-ext-size-bound}.

Uniqueness follows from coalescence. To prove coalescence, suppose that $\phi \colon \widehat{\cZ} \to \widehat{\cZ}$ is a $\cZ$-embedding which is not surjective. Let $\kappa = |\widehat{\cZ}|$ and consider the directed system $(\cZ_\alpha : \alpha < \kappa^+)$ where each $\cZ_\alpha$ is equal to $\widehat{\cZ}$, each map $\cZ_\alpha \to \cZ_{\alpha+1}$ is equal to $\phi$, and for limit $\lambda$ one takes the direct limit $\cZ'_\lambda=\varinjlim_{\alpha < \lambda} \cZ_\alpha$ and a $\cZ$-embedding $\cZ'_\lambda\to\widehat{\cZ}=\cZ_\lambda$, and then defines the maps $\cZ_\alpha\to\cZ_\lambda$ for $\alpha<\lambda$ by the composition $\cZ_\alpha\to\cZ'_\lambda\to\cZ_\lambda$. Then the limit $\varinjlim_{\alpha<\kappa^+} \cZ_\alpha$  is an ergodic, compact extension of $\cZ$ of density $\kappa^+$, which contradicts the maximality of $\widehat{\cZ}$.
\end{proof}

The uniqueness of the construction has the following direct consequence.

\begin{cor}\label{c:autZ-to-authatZ}
Every automorphism of $\cZ$ extends to an automorphism of $\widehat{\cZ}$.
\end{cor}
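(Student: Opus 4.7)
The plan is to exploit the uniqueness statement in Theorem~\ref{th:absolute-max-compact}: one can twist the given embedding $i_0 \colon \cZ \to \widehat\cZ$ by an automorphism $\sigma$ of $\cZ$ to obtain a new compact, ergodic extension $i_0 \circ \sigma \colon \cZ \to \widehat\cZ$ of $\cZ$, and then observe that this twisted structure still enjoys the universal property that characterizes $\widehat\cZ$. The induced $\cZ$-isomorphism between the two structures will be the automorphism of $\widehat\cZ$ extending $\sigma$.

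More precisely, let $\sigma \in \Aut(\cZ)$ and consider the map $i_0' \coloneqq i_0 \circ \sigma$. It is still a compact, ergodic extension of $\cZ$ because the underlying $\Gamma$-system $\widehat\cZ$ is unchanged, only the identification of $\cZ$ inside it has been twisted. To verify the universal property, let $i \colon \cZ \to \cX$ be any compact, ergodic extension of $\cZ$. Then $i \circ \sigma^{-1} \colon \cZ \to \cX$ is also a compact, ergodic extension of $\cZ$, so by the defining property of $(\widehat\cZ,i_0)$, there exists an embedding $j \colon \cX \to \widehat\cZ$ with $j \circ (i\circ \sigma^{-1}) = i_0$. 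Composing with $\sigma$ on the right yields $j \circ i = i_0 \circ \sigma = i_0'$, as required.

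Now apply the uniqueness part of Theorem~\ref{th:absolute-max-compact} to the two universal compact, ergodic extensions $(\widehat\cZ, i_0)$ and $(\widehat\cZ, i_0')$: it produces an isomorphism $\widehat\sigma \colon \widehat\cZ \to \widehat\cZ$ of $\Gamma$-systems with $\widehat\sigma \circ i_0 = i_0' = i_0 \circ \sigma$. Identifying $\cZ$ with its image in $\widehat\cZ$ under $i_0$, this precisely says that $\widehat\sigma$ extends $\sigma$, which is what we wanted.

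The only delicate point is to make sure that the uniqueness statement of Theorem~\ref{th:absolute-max-compact} genuinely gives an isomorphism relating the two embeddings (rather than just an abstract $\cZ$-isomorphism for some fixed identification); but this is exactly the content of coalescence of $\widehat\cZ$ over $\cZ$, applied to the composition of the two universal maps coming from the defining property, so there is no real obstacle.
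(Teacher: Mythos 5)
Your proposal is correct and is essentially the paper's own argument: twist $i_0$ by $\sigma$ to get a second universal compact ergodic extension $(\widehat{\cZ},\, i_0\circ\sigma)$ and invoke the uniqueness from Theorem~\ref{th:absolute-max-compact}. The paper's proof is simply more terse, omitting the explicit verification of the universal property and the coalescence bookkeeping that you spell out.
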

\begin{proof}
If $i_0 \colon \cZ \to \widehat \cZ$ is as in the theorem and $\phi \colon \cZ \to \cZ$ is an automorphism, then $i = i_0 \circ \phi\colon \cZ \to \widehat \cZ$ is also a universal, compact, ergodic extension of $\cZ$. By uniqueness, there is an isomorphism $\psi \colon \widehat \cZ \to \widehat \cZ$ such that $\psi \circ i_0 = i = i_0 \circ \phi$.
\end{proof}

 We can iterate the construction above and define an ergodic distal extension $\cZ\subseteq\cD_\alpha(\cZ)$ for each ordinal number $\alpha$ in the natural way:
\begin{itemize}
\item $\cD_0(\cZ)=\cZ$;
\item $\cD_{\alpha+1}(\cZ)=\widehat{\cD_\alpha(\cZ)}$;
\item $\cD_\lambda(\cZ)=\varinjlim_{\beta<\lambda} \cD_\beta(\cZ)$ if $\lambda$ is a limit ordinal.
\end{itemize}
Note that the tower $\cD_\alpha(\cZ)$ is unique up to $\cZ$-isomorphism. More precisely, if $\alpha$ is an ordinal and $(\cD_\beta)_{\beta\leq\alpha}$, $(\cD'_\beta)_{\beta\leq\alpha}$ are two towers constructed as above, then there is an isomorphism $\cD_\alpha\to\cD'_\alpha$ that restricts to a $\cZ$-isomorphism $\cD_\beta\to\cD'_\beta$ for each $\beta<\alpha$. This follows from Theorem~\ref{th:absolute-max-compact} and an easy induction using Corollary~\ref{c:autZ-to-authatZ}. Clearly, each $\cD_\alpha(\cZ)$ is a distal extension of $\cZ$ of distal rank at most $\alpha$.

\begin{theorem}\label{th:universal-distal-systems}
Let $\cZ$ be an ergodic system. Then the system $\cD_\alpha(\cZ)$ is universal for the ergodic, distal extensions of $\cZ$ of rank at most $\alpha$, i.e., any such system is a factor of $\cD_\alpha(\cZ)$. Moreover, $\cD_\alpha(\cZ)$ is coalescent over $\cZ$ and hence unique with this universal property.
\end{theorem}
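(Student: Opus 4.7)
The plan is to establish the universality and coalescence clauses simultaneously, by transfinite induction on $\alpha$. Denote by (U$_\alpha$) the universality property of $\cD_\alpha(\cZ)$ and by (C$_\alpha$) its coalescence over $\cZ$. The base case $\alpha=0$ is trivial. I will prove (U$_\alpha$) by an internal transfinite recursion along the given distal tower, and (C$_\alpha$) using (U$_\beta$) and (C$_\beta$) for all $\beta<\alpha$ together with the coalescence clause of Theorem~\ref{th:absolute-max-compact}. The uniqueness assertion will then follow in a standard way.

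\emph{Universality.} Let $\cX$ be an ergodic distal extension of $\cZ$ of rank at most $\alpha$, witnessed by a tower $(\cX_\beta)_{\beta\leq\alpha}$. I would build recursively a coherent family of $\cZ$-embeddings $j_\beta\colon\cX_\beta\to\cD_\beta(\cZ)$, where coherence means that $j_{\beta'}|_{\cX_\beta}$ equals $j_\beta$ composed with the natural inclusion $\cD_\beta(\cZ)\hookrightarrow\cD_{\beta'}(\cZ)$, for all $\beta\leq\beta'$. At a successor stage, given $j_\beta$, I would form an ergodic joining $\cY$ of $\cX_{\beta+1}$ and $\cD_\beta(\cZ)$ over $\cX_\beta$ (the latter identified as a subfactor of $\cD_\beta(\cZ)$ via $j_\beta$) by invoking Proposition~\ref{p:ergodic-joinings}. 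Applying Proposition~\ref{p:factor-generated-by-compact-factors} to the compact extension $\cX_\beta\subseteq\cX_{\beta+1}$ and the trivial extension $\cD_\beta(\cZ)\subseteq\cD_\beta(\cZ)$ inside $\cY$ shows that $\cY$ is a compact extension of $\cD_\beta(\cZ)$. By the universal property of $\cD_{\beta+1}(\cZ)=\widehat{\cD_\beta(\cZ)}$ in Theorem~\ref{th:absolute-max-compact}, the joining $\cY$ then embeds into $\cD_{\beta+1}(\cZ)$ pointwise fixing $\cD_\beta(\cZ)$; composing with $\cX_{\beta+1}\hookrightarrow\cY$ produces $j_{\beta+1}$ extending $j_\beta$. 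At a limit $\lambda\leq\alpha$, the coherent family $(j_\beta)_{\beta<\lambda}$ glues into an isometric, $\Gamma$-equivariant Boolean morphism from $\bigcup_{\beta<\lambda}\cX_\beta$ into $\cD_\lambda(\cZ)=\varinjlim_{\beta<\lambda}\cD_\beta(\cZ)$, which extends by continuity to all of $\cX_\lambda$.

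\emph{Coalescence.} The heart of the argument is the canonical identification of $\cD_\beta(\cZ)\subseteq\cD_\alpha(\cZ)$ as the largest ergodic distal extension of $\cZ$ of rank at most $\beta$ sitting inside $\cD_\alpha(\cZ)$. Such a maximum $\cM$ exists by Proposition~\ref{p:factor-generated-by-distal-factors}; it contains $\cD_\beta(\cZ)$ and by (U$_\beta$) admits a $\cZ$-embedding $\psi\colon\cM\to\cD_\beta(\cZ)$. The composition of $\psi$ with the inclusion $\cD_\beta(\cZ)\hookrightarrow\cM$ is a $\cZ$-self-embedding of $\cD_\beta(\cZ)$, which is an automorphism by (C$_\beta$). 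This forces $\psi$ to be bijective, and a short diagram chase then shows that the inclusion $\cD_\beta(\cZ)\hookrightarrow\cM$ is surjective, so $\cM=\cD_\beta(\cZ)$. Now let $\phi\colon\cD_\alpha(\cZ)\to\cD_\alpha(\cZ)$ be a $\cZ$-embedding. By canonicity, $\phi(\cD_\beta(\cZ))\subseteq\cD_\beta(\cZ)$ for every $\beta<\alpha$, and by (C$_\beta$) the restriction $\phi_\beta=\phi|_{\cD_\beta(\cZ)}$ is a $\cZ$-automorphism of $\cD_\beta(\cZ)$. If $\alpha=\beta+1$, I would invoke Corollary~\ref{c:autZ-to-authatZ} to extend $\phi_\beta^{-1}$ to an automorphism $\tilde\phi$ of $\cD_\alpha(\cZ)=\widehat{\cD_\beta(\cZ)}$; then $\tilde\phi\circ\phi$ restricts to the identity on $\cD_\beta(\cZ)$, so by the coalescence clause of Theorem~\ref{th:absolute-max-compact} it is an automorphism, and therefore so is $\phi$. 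If $\alpha$ is a limit, the image of $\phi$ contains every $\cD_\beta(\cZ)$ for $\beta<\alpha$, hence the generating subalgebra $\bigcup_{\beta<\alpha}\cD_\beta(\cZ)$; since $\phi$ is an isometry, its image is closed and thus equal to $\cD_\alpha(\cZ)$.

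\emph{Uniqueness and main obstacle.} The final uniqueness assertion will follow by a standard back-and-forth: any ergodic $\Gamma$-system $\cA$ enjoying the same universal property admits mutual $\cZ$-embeddings with $\cD_\alpha(\cZ)$, and (C$_\alpha$) forces these embeddings to be $\cZ$-isomorphisms. The main obstacle is the entangled nature of (U) and (C) across levels: the canonical identification of $\cD_\beta(\cZ)$ as a distinguished subfactor of $\cD_\alpha(\cZ)$ already relies on (C$_\beta$), and the successor step of coalescence calls for first ``correcting'' $\phi_\beta$ via Corollary~\ref{c:autZ-to-authatZ} before applying the coalescence of $\widehat{\cD_\beta(\cZ)}$ over $\cD_\beta(\cZ)$. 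Arranging this interaction coherently is precisely what the simultaneous induction is designed to manage.
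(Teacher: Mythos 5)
Your proof is correct and follows essentially the same route as the paper: an inductive construction of embeddings $j_\beta$ via ergodic joinings for universality, and a transfinite induction for coalescence via the canonical characterization of $\cD_\beta(\cZ)$ inside $\cD_\alpha(\cZ)$. You are in fact slightly more careful than the paper at the successor step of the coalescence argument, where the paper merely asserts that surjectivity of $\phi$ ``follows from Theorem~\ref{th:absolute-max-compact}''; your explicit correction of $\phi$ by an automorphism extending $\phi_\beta^{-1}$ (via Corollary~\ref{c:autZ-to-authatZ}) is exactly what is needed to reduce to a self-embedding fixing $\cD_\beta(\cZ)$ pointwise.
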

\begin{proof}
  Let $(\cX_\beta)_{\beta\leq\alpha}$ be a distal tower over $\cZ$ of length $\alpha$ with $\cX_\alpha$ ergodic. We construct by induction a sequence of embeddings $i_\beta \colon \cX_\beta \to \cD_\beta(\cZ)$. We start with $i_0 = \id$. For the successor step, suppose that $i_\beta$ has already been constructed. Apply Proposition~\ref{p:ergodic-joinings} to find an ergodic joining $\cY$ of $\cX_\beta \to \cX_{\beta+1}$ and $\cX_\beta \xrightarrow{i_\beta} \cD_\beta(\cZ)$ over $\cX_\beta$. By Proposition~\ref{p:factor-generated-by-compact-factors}, $\cY$ is a compact extension of $\cD_\beta(\cZ)$, so by the universality of $\cD_{\beta+1}$, there exists an embedding $\cY \to \cD_{\beta+1}(\cZ)$ over $\cD_\beta(\cZ)$. Now composing the embeddings $\cX_{\beta +1} \to \cY$ and $\cY \to \cD_{\beta+1}(\cZ)$ yields $i_{\beta+1}$. If $\lambda$ is a limit ordinal, one can just take $i_\lambda = \bigcup_{\alpha < \lambda} i_\beta$.
  
For the moreover assertion, we want to show that each self-embedding of $\cD_\alpha(\cZ)$ fixing $\cZ$ is an isomorphism. We proceed by induction. Suppose this holds for every $\alpha<\alpha'$ and let $\phi\colon \cD_{\alpha'}(\cZ)\to \cD_{\alpha'}(\cZ)$ be a $\cZ$-embedding. We show first that $\phi(\cD_\alpha(\cZ))\subseteq \cD_\alpha(\cZ)$ for each $\alpha<\alpha'$. Indeed, by Proposition~\ref{p:factor-generated-by-distal-factors}, the system $\cW$ generated by $\phi(\cD_\alpha(\cZ))$ and $\cD_\alpha(\cZ)$ inside $\cD_{\alpha'}(\cZ)$ is a distal extension of $\cZ$ of rank at most $\alpha$, hence a factor of $\cD_\alpha(\cZ)$ by the universal property we just proved. Thus we have embeddings $\cD_\alpha(\cZ)\to\cW\to\cD_\alpha(\cZ)$, and the inductive hypothesis implies they are surjective; hence $\phi(\cD_\alpha(\cZ))\subseteq \cD_\alpha(\cZ)$. Again by the inductive hypothesis, we actually have equality for each $\alpha<\alpha'$. Now if $\alpha'$ is of the form $\alpha+1$, the fact that $\phi$ is surjective follows from Theorem~\ref{th:absolute-max-compact}. The limit case is clear.
\end{proof}

The proof of Theorem~\ref{th:universal-distal-systems} implies that if $\alpha < \alpha'$ and $\cY$ is a distal extension of $\cZ$ of rank at most $\alpha$ such that $\cY \sub \cD_{\alpha'}(\cZ)$, then $\cY \sub \cD_\alpha(\cZ)$. Thus within $\cD_{\alpha'}(\cZ)$, the system $\cD_\alpha(\cZ)$ can be defined as the maximal intermediate distal extension of $\cZ$ of rank at most $\alpha$.

\begin{theorem}\label{th:distal-god}
Let $\cZ$ be an ergodic system and let $\kappa = |\cZ| + \aleph_0$. Then we have $\cD_{\kappa^++1}(\cZ)=\cD_{\kappa^+}(\cZ)$, i.e., $\cD_{\kappa^+}(\cZ)$ is the largest ergodic, distal extension of $\cZ$. In particular, every ergodic, distal extension of $\cZ$ has density at most $\kappa^{\aleph_0} + \kappa^+$.
\end{theorem}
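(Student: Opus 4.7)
The plan is to combine the rank bound of Corollary~\ref{c:bound-on-distal-rank} with the universality and coalescence of $\cD_\alpha(\cZ)$ established in Theorem~\ref{th:universal-distal-systems}, and then estimate the density of $\cD_{\kappa^+}(\cZ)$ by transfinite induction, using the bound from Theorem~\ref{th:absolute-max-compact} at successor stages.

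First I would establish the stabilization $\cD_{\kappa^++1}(\cZ)=\cD_{\kappa^+}(\cZ)$. By construction, $\cD_{\kappa^++1}(\cZ)$ is an ergodic compact (hence distal) extension of $\cZ$, so Corollary~\ref{c:bound-on-distal-rank} gives $\rk(\cD_{\kappa^++1}(\cZ)/\cZ)\le(|\cZ|+\aleph_0)^+=\kappa^+$. The universal property in Theorem~\ref{th:universal-distal-systems} then supplies a $\cZ$-embedding $\cD_{\kappa^++1}(\cZ)\hookrightarrow\cD_{\kappa^+}(\cZ)$, which composed with the tautological inclusion $\cD_{\kappa^+}(\cZ)\subseteq\cD_{\kappa^++1}(\cZ)$ yields a $\cZ$-self-embedding of $\cD_{\kappa^++1}(\cZ)$; coalescence over $\cZ$ (also part of Theorem~\ref{th:universal-distal-systems}) forces this composition to be an isomorphism, and the required equality follows.

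For the density bound, I would prove by transfinite induction on $\alpha\le\kappa^+$ that $|\cD_\alpha(\cZ)|\le\kappa^{\aleph_0}+|\alpha|$. The base case $\alpha=0$ is clear since $|\cZ|\le\kappa$. For the successor step, if $|\cD_\alpha(\cZ)|\le\mu:=\kappa^{\aleph_0}+|\alpha|$, then Theorem~\ref{th:absolute-max-compact} gives $|\cD_{\alpha+1}(\cZ)|\le(\mu+\aleph_0)^{\aleph_0}=\mu^{\aleph_0}$; since for $\alpha<\kappa^+$ every ordinal satisfies $|\alpha|\le\kappa\le\kappa^{\aleph_0}$, in fact $\mu=\kappa^{\aleph_0}$ and $\mu^{\aleph_0}=\kappa^{\aleph_0}$, well within the target. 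The limit step is immediate from $|\cD_\lambda(\cZ)|\le\sum_{\beta<\lambda}|\cD_\beta(\cZ)|\le|\lambda|\cdot\kappa^{\aleph_0}$. Finally, any ergodic distal extension of $\cZ$ has rank at most $\kappa^+$ by Corollary~\ref{c:bound-on-distal-rank}, hence (by Theorem~\ref{th:universal-distal-systems} combined with the stabilization) is a factor of $\cD_{\kappa^+}(\cZ)$, whose density is at most $\kappa^+\cdot\kappa^{\aleph_0}=\kappa^{\aleph_0}+\kappa^+$ by the induction evaluated at $\alpha=\kappa^+$.

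There is no substantial obstacle: once the universality and coalescence in Theorem~\ref{th:universal-distal-systems} are granted, the equality part is formal, and the density bound is a routine cardinal-arithmetic induction. The only point that requires mild care is the successor step of the induction, where one exploits both $(\kappa^{\aleph_0})^{\aleph_0}=\kappa^{\aleph_0}$ and the fact that every ordinal below $\kappa^+$ has cardinality at most $\kappa$, so that the bound does not escape $\kappa^{\aleph_0}$ before the final limit at $\alpha=\kappa^+$.
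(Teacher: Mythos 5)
Your proof is correct and follows essentially the same route the paper takes: Corollary~\ref{c:bound-on-distal-rank} gives the rank bound $\kappa^+$, which combined with the universality and coalescence of Theorem~\ref{th:universal-distal-systems} forces the tower to stabilize, and the density bound is obtained by iterating the estimate from Theorem~\ref{th:absolute-max-compact}. The paper compresses all of this into two sentences; your transfinite-induction bookkeeping for the density and the explicit coalescence argument for stabilization are just the intended unpacking of that terse proof.
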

\begin{proof}
The bound on the rank follows from Corollary~\ref{c:bound-on-distal-rank}. Then, the density bound follows from Theorem~\ref{th:absolute-max-compact}.
\end{proof}

In particular, if we define $\cD_\alpha(\Gamma)\coloneqq\cD_\alpha(1)$ where $1$ is the trivial $1$-point system, then $\cD_{\omega_1}(\Gamma)$ is the largest ergodic, distal $\Gamma$-system. It is natural to ask what the actual distal rank of $\cD_{\omega_1}(\Gamma)$ is. For $\Gamma=\Z$ this is answered by a construction of Beleznay and Foreman.

\begin{theorem}\label{th:rank-of-Domega1Z}
We have $\cD_\alpha(\Z)\subsetneq \cD_{\alpha+1}(\Z)$ for all $\alpha < \omega_1$. That is, $\rk(\cD_{\omega_1}(\Z))=\omega_1$.
\end{theorem}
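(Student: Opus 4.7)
The plan is to derive the theorem from the Beleznay--Foreman construction of separable ergodic distal $\Z$-systems of every countable distal rank \cite{Beleznay1996}, combined with the universality of $\cD_\alpha(\Z)$ proved in Theorem~\ref{th:universal-distal-systems} and with the fact (Proposition~\ref{p:rank-of-factors} in the appendix) that the distal rank does not increase when passing to factors.

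Fix an ordinal $\alpha<\omega_1$. First I would invoke \cite{Beleznay1996} to obtain a separable ergodic distal $\Z$-system $\cX^{(\alpha+1)}$ of distal rank exactly $\alpha+1$. By Theorem~\ref{th:universal-distal-systems}, this system is a factor of $\cD_{\alpha+1}(\Z)$. Suppose, aiming at a contradiction, that $\cD_\alpha(\Z)=\cD_{\alpha+1}(\Z)$; then $\cX^{(\alpha+1)}$ would be a factor of $\cD_\alpha(\Z)$. But $\cD_\alpha(\Z)$ is by construction a distal extension of the trivial system of rank at most $\alpha$, and Proposition~\ref{p:rank-of-factors} ensures that distal rank is non-increasing under factors; this would force $\rk(\cX^{(\alpha+1)})\le\alpha$, contradicting the Beleznay--Foreman choice. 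Hence $\cD_\alpha(\Z)\subsetneq\cD_{\alpha+1}(\Z)$ for every $\alpha<\omega_1$.

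For the rank assertion, the strict chain just established yields $\rk(\cD_{\omega_1}(\Z))>\alpha$ for every countable $\alpha$, so $\rk(\cD_{\omega_1}(\Z))\ge\omega_1$. The reverse inequality is immediate from the very definition of the tower $(\cD_\beta(\Z))_{\beta\le\omega_1}$, which presents $\cD_{\omega_1}(\Z)$ as a tower of compact extensions of length $\omega_1$ over the trivial system; alternatively, Corollary~\ref{c:bound-on-distal-rank} applied to the trivial base gives the same bound $(\aleph_0)^+=\omega_1$. Combining the two inequalities gives $\rk(\cD_{\omega_1}(\Z))=\omega_1$.

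The main obstacle is essentially external: all the genuine dynamical content is concentrated in the Beleznay--Foreman construction, which produces distal $\Z$-systems of arbitrarily high countable rank and which we treat here as a black box. Our contribution is merely to package this through the universality and rank-monotonicity established earlier in the paper.
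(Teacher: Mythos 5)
Your proof is correct and takes essentially the same approach as the paper: invoke Beleznay--Foreman to obtain separable ergodic distal $\Z$-systems of every countable rank, use Theorem~\ref{th:universal-distal-systems} to realize them as factors of the universal tower, and apply the rank monotonicity of Proposition~\ref{p:rank-of-factors} to get the contradiction. The paper's proof is merely more telegraphic; the one small wrinkle in your write-up is the phrase ``the strict chain just established yields $\rk(\cD_{\omega_1}(\Z))>\alpha$'' --- a strictly increasing chain of factors does not by itself control rank (compare an infinite product of finite rotations, which is compact of rank $1$ yet carries a strictly increasing $\omega$-chain of subfactors); the lower bound instead comes directly from $\cX^{(\alpha+1)} \subseteq \cD_{\omega_1}(\Z)$ together with Proposition~\ref{p:rank-of-factors}, exactly as you used it for the strict inclusions.
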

\begin{proof}
In \cite{Beleznay1995} examples are built of ergodic, distal systems of arbitrarily high countable rank. Since the rank of a distal system is at least as large as the rank of any of its factors (see the Appendix), the result follows.
\end{proof}

On the other hand, there are groups $\Gamma$ with no non-trivial compact systems (for example, finitely generated, simple, infinite groups) for which $\cD_{\omega_1}(\Gamma)=1$. We will show in Section~\ref{sec:groups-with-property-T} that for groups with property (T), all ergodic distal systems are compact, i.e.,  $\cD_{\omega_1}(\Gamma) = \cD_1(\Gamma)$.

The systems $\cD_\alpha(\Gamma)$ give examples where many classical results in ergodic theory valid in the separable setting fail. We will mention two.

Zimmer~\cite{Zimmer1977} proved that if $(X, \cX, \mu)$ is any separable $\Z$-system and $K$ is a non-trivial, compact, metrizable group, then there exists a cocycle $\rho \colon \Z \times X \to K$ which is not a coboundary. However, we have the following.
\begin{prop}
  \label{p:Zimmer-fail}
  Let $\cX = \cD_{\omega_1}(\Gamma)$ and let $X$ be the probability space associated to $\cX$. Suppose that $K$ is a compact group and $\rho \colon \Gamma \times X \to K$ is a cocycle. Then $\rho$ is a coboundary.
\end{prop}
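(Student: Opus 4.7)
The plan is to form the skew product $\cX_\rho = \cX \rtimes_\rho K$, verify that it is a compact (in particular distal) extension of $\cX$, and then use the universality and coalescence of $\cX = \cD_{\omega_1}(\Gamma)$ to force $\rho$ to be trivial up to coboundary. We may assume $K$ is metrizable, by writing $K$ as an inverse limit of compact Lie quotients $K_n$ and assembling coboundaries for the projected cocycles $\rho_n = \pi_n \circ \rho$ via an inverse-limit/compactness argument on the $K_n$-torsors of valid sections. Granting this, Peter--Weyl decomposes $L^2(K)$ into orthogonal finite-dimensional matrix-coefficient subspaces $V_\pi$ indexed by the irreducible unitary representations $\pi$ of $K$; each $L^\infty(\cX) \cdot V_\pi \subseteq L^2(\cX_\rho)$ is a $\Gamma$-invariant, finitely generated $\cX$-submodule, and together they span $L^2(\cX_\rho)$, as required.

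Next I would exploit that $K$ acts on $\cX_\rho$ by right translation on the second coordinate, commuting with the $\Gamma$-action. Let $\cI = \cX_\rho^\Gamma$ denote the sub-factor of $\Gamma$-invariants. Because $\cX_\rho^K = \cX$, any element of $\cI$ fixed by $K$ lies in $\cX^\Gamma$, which is trivial by ergodicity of $\cX$; hence $K$ acts ergodically on $\cI$. The classical structure theorem for ergodic pmp actions of compact groups then gives $\cI \cong L^\infty(K/H)$ for some closed subgroup $H \le K$, and a standard Mackey-range analysis shows that $\rho$ is $K$-cohomologous to an $H$-valued cocycle $\rho'$, each ergodic component of $\cX_\rho$ (viewed as a $\Gamma$-extension of $\cX$) being isomorphic to the skew product $\cX \rtimes_{\rho'} H$. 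In particular, the latter is an ergodic, compact extension of $\cX$, and the hypothesis enters decisively: by $\cX = \cD_{\omega_1}(\Gamma) = \cD_{\omega_1 + 1}(\Gamma)$ (Theorem~\ref{th:i:universal-distal}) there is an embedding $\cX \rtimes_{\rho'} H \hookrightarrow \cX$, and composing with the canonical inclusion $\cX \hookrightarrow \cX \rtimes_{\rho'} H$ yields a self-embedding of $\cX$, which by coalescence (Theorem~\ref{th:universal-distal-systems}) must be an isomorphism. Hence the canonical inclusion is itself an isomorphism; but its image in $L^\infty(X \times H)$ consists of the functions constant in $h$, which exhausts $L^\infty(\cX \rtimes_{\rho'} H)$ only when $H = \{1\}$. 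Thus $\rho'$ is trivial and $\rho$ is a coboundary.

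The main technical obstacle is performing the ergodic-decomposition and Mackey-range analysis in the possibly non-separable measure algebra $\cX$. Since $K$ is metrizable, the sub-factor $\cI \cong L^\infty(K/H)$ is automatically separable, so its $K$-structure presents no difficulty; for the identification of the ergodic components as extensions of $\cX$ one can restrict to a $\Gamma$-invariant separable factor $\cW \subseteq \cX$ containing the values of $\rho$ (any coboundary over $\cW$ pulls back to one over $\cX$), while still invoking the universality and coalescence of the larger system $\cX$ in the final step.
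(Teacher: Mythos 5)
Your proof is correct and follows essentially the same route as the paper's: pass to the essential (Mackey) range of the cocycle, observe that the resulting skew product is an ergodic compact extension of $\cD_{\omega_1}(\Gamma)$, and use that $\cD_{\omega_1+1}(\Gamma) = \cD_{\omega_1}(\Gamma)$ to conclude this extension must be trivial. The paper's version is a three-sentence application of these facts, treating the essential range as a black box, whereas you unpack the Mackey-range analysis, the Peter--Weyl decomposition, and the coalescence step (which can be streamlined: the embedding $\cX \rtimes_{\rho'} H \to \widehat{\cX} = \cX$ provided by Theorem~\ref{th:absolute-max-compact} may be taken over $\cX$, so its composition with the canonical inclusion is already the identity).
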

\begin{proof}
  Let $L \leq K$ be the \df{essential range} of $\rho$. Then $X \times_\rho L$ is an ergodic, compact extension of $\cD_{\omega_1}(\Gamma)$, so it must be trivial. This means that $L$ is trivial and $\rho$ is a coboundary.
\end{proof}

Recall that an ergodic system $\cX$ is called \df{$2$-simple} if whenever $\cX_1$ and $\cX_2$ are two isomorphic copies of $\cX$ inside a larger ergodic system, then $\cX_1 = \cX_2$. If $\cX$ is not weakly mixing (for example, distal), $2$-simplicity implies simplicity (which has a more complicated definition; see \cite{Glasner2003}*{Chapter~12} for more details).
\begin{prop}
  \label{p:Dalpha-simple}
  For every ordinal $\alpha \leq \omega_1$, $\cD_\alpha(\Gamma)$ is a simple system.
\end{prop}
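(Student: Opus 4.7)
The plan is to reduce simplicity to $2$-simplicity (which is legitimate because $\cD_\alpha(\Gamma)$ is distal, hence not weakly mixing unless trivial, and the case $\cD_\alpha(\Gamma) = 1$ is vacuous) and then exploit together the universality and the coalescence over the trivial factor established in Theorem~\ref{th:universal-distal-systems}.

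So suppose $\cY$ is an ergodic $\Gamma$-system containing two copies $\cX_1, \cX_2 \sub \cY$, each isomorphic to $\cD_\alpha(\Gamma)$. Let $\cW \sub \cY$ be the factor of $\cY$ generated by $\cX_1$ and $\cX_2$. Since $\cY$ is ergodic, so is $\cW$. Each $\cX_i$ is a distal extension of the trivial system of rank at most $\alpha$, so by Proposition~\ref{p:factor-generated-by-distal-factors} applied over the trivial system, $\cW$ is itself an ergodic, distal extension of the trivial system of rank at most $\alpha$. The universality asserted in Theorem~\ref{th:universal-distal-systems} then yields an embedding $g \colon \cW \to \cD_\alpha(\Gamma)$.

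Now fix an isomorphism $\phi \colon \cX_1 \to \cD_\alpha(\Gamma)$ and consider the composition
\begin{equation*}
  \cD_\alpha(\Gamma) \xrightarrow{\phi^{-1}} \cX_1 \hookrightarrow \cW \xrightarrow{g} \cD_\alpha(\Gamma).
\end{equation*}
This is a self-embedding of $\cD_\alpha(\Gamma)$ fixing the trivial factor, so by the coalescence part of Theorem~\ref{th:universal-distal-systems}, it is an isomorphism. Hence $g(\cX_1) = \cD_\alpha(\Gamma)$. But the inclusions $g(\cX_1) \sub g(\cW) \sub \cD_\alpha(\Gamma)$ then force $g(\cX_1) = g(\cW)$, and since $g$ is injective we conclude $\cX_1 = \cW$. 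The identical argument with $\cX_2$ in place of $\cX_1$ gives $\cX_2 = \cW$, and therefore $\cX_1 = \cX_2$, which is exactly $2$-simplicity.

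The main (and essentially only) subtle point is the last step: $2$-simplicity requires an equality of subalgebras of $\cY$, not merely an abstract isomorphism, so one must be careful to deduce the equality $\cX_i = \cW$ from an identification taking place after transporting everything via $g$ into $\cD_\alpha(\Gamma)$, using the injectivity of $g$. Once this is seen, everything else is a direct application of the two defining properties of the universal tower.
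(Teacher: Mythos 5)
Your proof is correct and follows the same route as the paper: generate the factor $\cW$ from $\cX_1$ and $\cX_2$, note it is ergodic and distal of rank at most $\alpha$ by Proposition~\ref{p:factor-generated-by-distal-factors}, and then apply universality and coalescence from Theorem~\ref{th:universal-distal-systems}. The paper compresses the last two sentences of your argument into ``by universality and coalescence of $\cX$, we must have that $\cX_1 = \cX_2$,'' so you have simply unpacked the same reasoning (including the correct observation that one must transport via the injective $g$ to convert the abstract isomorphism into an equality of subalgebras of $\cY$).
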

\begin{proof}
  Let $\cX = \cD_\alpha(\Gamma)$ and suppose that $\cX_1$ and $\cX_2$ are two copies of $\cX$ in an ergodic system $\cY$. Then by Proposition~\ref{p:factor-generated-by-distal-factors}, the system generated by $\cX_1$ and $\cX_2$ is distal of rank at most $\alpha$. Now by universality and coalescence of $\cX$, we must have that $\cX_1 = \cX_2$.
\end{proof}

Proposition~\ref{p:Dalpha-simple} should be contrasted with a theorem of Veech (see \cite{Glasner2003}*{Theorem~12.1}) that states that a separable, ergodic, simple system is either compact or weakly mixing. Combining the two, we have the following (see also Theorem~\ref{th:polish-AutD2} for a generalization).
\begin{cor}
  \label{c:from-Veech}
  Suppose that $\cD_2(\Gamma)$ is a separable system. Then $\cD_2(\Gamma) = \cD_1(\Gamma)$ and every distal $\Gamma$-system is compact.
\end{cor}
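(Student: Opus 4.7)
The plan is to combine Proposition~\ref{p:Dalpha-simple} with the theorem of Veech cited just above to force a dichotomy on $\cD_2(\Gamma)$, and then to propagate the resulting equality $\cD_2(\Gamma)=\cD_1(\Gamma)$ up the tower $(\cD_\alpha(\Gamma))_\alpha$ by transfinite induction. The system $\cD_2(\Gamma)$ is ergodic by construction, simple by Proposition~\ref{p:Dalpha-simple}, and separable by hypothesis; Veech's theorem then tells me it is either compact or weakly mixing.

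If $\cD_2(\Gamma)$ is compact, the universal property in Theorem~\ref{th:absolute-max-compact} yields an embedding $\cD_2(\Gamma)\to\cD_1(\Gamma)$, and since the construction $\cD_2(\Gamma)=\widehat{\cD_1(\Gamma)}$ already provides the opposite inclusion $\cD_1(\Gamma)\subseteq\cD_2(\Gamma)$, coalescence of $\cD_1(\Gamma)$ upgrades the pair of embeddings to an equality. If $\cD_2(\Gamma)$ is weakly mixing, then the classical fact that weak mixing rules out finite-dimensional $\Gamma$-invariant subspaces of $L^2_0$ gives that $\cD_2(\Gamma)$ has no nontrivial compact factor. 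Since $\cD_1(\Gamma)$ sits inside $\cD_2(\Gamma)$ as a compact factor, it must be trivial, and then $\cD_2(\Gamma)=\widehat{\cD_1(\Gamma)}=\widehat{1}=\cD_1(\Gamma)$ is trivial as well. In either case $\cD_2(\Gamma)=\cD_1(\Gamma)$.

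To conclude, I will run a transfinite induction showing $\cD_\alpha(\Gamma)=\cD_1(\Gamma)$ for every $\alpha\geq 1$: at successor stages, $\cD_{\beta+1}(\Gamma)=\widehat{\cD_\beta(\Gamma)}=\widehat{\cD_1(\Gamma)}=\cD_2(\Gamma)=\cD_1(\Gamma)$, and at limit stages the chain is eventually constant, so its direct limit coincides with $\cD_1(\Gamma)$. Theorem~\ref{th:universal-distal-systems} then exhibits every ergodic, distal $\Gamma$-system as a factor of $\cD_{\omega_1}(\Gamma)=\cD_1(\Gamma)$, hence compact. I do not anticipate a substantive obstacle: the whole argument is bookkeeping around the Veech dichotomy, and the single nontrivial external ingredient is the classical observation that weakly mixing systems admit no nontrivial compact factor.
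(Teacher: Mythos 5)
Your proof is correct and is exactly the argument the paper intends: apply Veech's dichotomy to the separable, ergodic, simple system $\cD_2(\Gamma)$ (simplicity from Proposition~\ref{p:Dalpha-simple}), observe that in the compact case $\cD_2=\cD_1$ by universality and coalescence of $\cD_1$, and in the weakly mixing case $\cD_1$ (hence $\cD_2=\widehat{\cD_1}$) is trivial, then propagate up the tower and invoke Theorem~\ref{th:universal-distal-systems}. The paper leaves these bookkeeping details to the reader, and you have filled them in accurately.
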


We conclude this section with a simple observation for future reference.
\begin{prop}
  \label{p:inclusion-Dalpha}
  Let $\cX \sub \cY$ be an extension of ergodic systems. Then for every ordinal~$\alpha$, $\cD_\alpha(\cX)$ is a factor of $\cD_\alpha(\cY)$.
\end{prop}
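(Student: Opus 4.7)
The plan is to construct, by transfinite induction on $\alpha$, a coherent family of embeddings $i_\alpha \colon \cD_\alpha(\cX) \hookrightarrow \cD_\alpha(\cY)$, meaning that whenever $\beta \leq \alpha$, the restriction of $i_\alpha$ to the factor $\cD_\beta(\cX) \sub \cD_\alpha(\cX)$ agrees with $i_\beta$ (under the identification $\cD_\beta(\cY) \sub \cD_\alpha(\cY)$). Maintaining coherence through the successor step is what will allow the limit step to go through.

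For the base case $\alpha = 0$, take $i_0$ to be the given inclusion $\cX \hookrightarrow \cY$.

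For the successor step $\alpha \to \alpha + 1$, assume $i_\alpha$ has been constructed, and use it to view $\cD_\alpha(\cX)$ as a factor of $\cD_\alpha(\cY)$. Apply Proposition~\ref{p:ergodic-joinings} to form an ergodic joining $\cW$ of $\cD_{\alpha+1}(\cX)$ and $\cD_\alpha(\cY)$ over $\cD_\alpha(\cX)$. Now $\cW$ is generated by its two factors $\cD_\alpha(\cY)$ (a trivially compact extension of itself) and $\cD_{\alpha+1}(\cX)$ (a compact extension of $\cD_\alpha(\cX) \sub \cD_\alpha(\cY)$), so Proposition~\ref{p:factor-generated-by-compact-factors} shows that the extension $\cD_\alpha(\cY) \sub \cW$ is compact. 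The universal property of $\cD_{\alpha+1}(\cY) = \widehat{\cD_\alpha(\cY)}$ from Theorem~\ref{th:absolute-max-compact} then provides an embedding $\cW \hookrightarrow \cD_{\alpha+1}(\cY)$ over $\cD_\alpha(\cY)$. Composing with $\cD_{\alpha+1}(\cX) \hookrightarrow \cW$ yields $i_{\alpha+1}$; since the first arrow extends the embedding $\cD_\alpha(\cX) \hookrightarrow \cD_\alpha(\cY)$ (by the joining construction) and the second is over $\cD_\alpha(\cY)$, the composition restricts to $i_\alpha$ on $\cD_\alpha(\cX)$, as required.

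For a limit ordinal $\lambda$, coherence means the family $(i_\beta)_{\beta < \lambda}$ assembles into a single embedding $\varinjlim_{\beta < \lambda} \cD_\beta(\cX) \to \varinjlim_{\beta < \lambda} \cD_\beta(\cY)$, which, by the definition of the distal tower, is precisely the desired $i_\lambda \colon \cD_\lambda(\cX) \to \cD_\lambda(\cY)$.

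There is no real obstacle; the proof is essentially a formal application of the universal property of $\widehat{(\cdot)}$ together with the basic amalgamation tools of Propositions~\ref{p:ergodic-joinings} and \ref{p:factor-generated-by-compact-factors}. The only subtle point is keeping the family of embeddings coherent, so that the limit step is well-defined, and this is handled at the successor step by carefully choosing the joining over $\cD_\alpha(\cX)$ and the subsequent embedding over $\cD_\alpha(\cY)$.
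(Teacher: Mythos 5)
Your proof is correct, but it takes a noticeably longer route than the paper. The paper's argument is a one-step application of the machinery already developed: take an ergodic joining $\cW$ of $\cY$ and $\cD_\alpha(\cX)$ over $\cX$ (Proposition~\ref{p:ergodic-joinings}); since $\cW$ is generated by the distal extensions $\cX \sub \cD_\alpha(\cX)$ (rank $\leq \alpha$) and $\cX \sub \cY$ (rank $0$), Proposition~\ref{p:factor-generated-by-distal-factors} gives directly that $\cY \sub \cW$ is distal of rank at most $\alpha$, and the universal property of $\cD_\alpha(\cY)$ from Theorem~\ref{th:universal-distal-systems} then embeds $\cW$, and hence $\cD_\alpha(\cX)$, into $\cD_\alpha(\cY)$. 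You instead run a transfinite induction up the two towers, using only the compact-level amalgamation (Proposition~\ref{p:factor-generated-by-compact-factors}) and the compact-level universal property of $\widehat{(\cdot)}$ (Theorem~\ref{th:absolute-max-compact}) at each successor step, together with a coherence bookkeeping at limits. This is precisely the argument used to prove the universality assertion in Theorem~\ref{th:universal-distal-systems}, so in effect you are re-deriving that theorem for these particular towers rather than invoking it. The trade-off: your proof is more self-contained and uses weaker inputs, while the paper's is shorter because it deploys the already-available distal-rank version of the amalgamation lemma and the full universal property as black boxes. Both arguments are sound.
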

\begin{proof}
  Let $\cW$ be an ergodic joining of $\cY$ and $\cD_\alpha(\cX)$ over $\cX$, which exists by Proposition~\ref{p:ergodic-joinings}. Then by Proposition~\ref{p:factor-generated-by-distal-factors}, the extension $\cY \sub \cW$ is distal of rank at most $\alpha$ and by the universal property of $\cD_\alpha(\cY)$, $\cW$ is a factor of $\cD_\alpha(\cY)$.
\end{proof}

%%%%%%%%%%%%%%%%%%%%%%%%%%%%%%%%%%%%%%%%%%%%%%%%%%

\section{Compact extensions and algebraic closure}
\label{sec:comp-extens-algebr}

Recall that an element of a structure $M$ is \emph{algebraic over a subset $A\subseteq M$} if it belongs to a compact $A$-definable subset of $M$. We say $a\in M$ is \emph{algebraic} if it is algebraic over $\emptyset$. The set $\acl^M(A)$ of all elements algebraic over $A$ is the \emph{algebraic closure of $A$ in $M$}, and forms a substructure of $M$.

The substructure $\acl^M(\emptyset)\subseteq M$ is a fundamental model-theoretic invariant of $M$: it only depends on its first-order theory and it is a common substructure of all models of this theory. In the case of a $\Gamma$-system~$\cX$, this defines a canonical factor
$$\cX^{\alg} \coloneqq \acl^\cX(\emptyset) \subseteq\cX$$
and one may ask whether this is related to any natural notion in ergodic theory. If $\Gamma$ is an infinite amenable group and the action $\Gamma \actson \cX$ is free, then $\acl^\cX(A)$ is just the factor generated by $A$ in $\cX$ (this follows from quantifier elimination). Hence $\cX^{\alg}$ is always the trivial factor. On the other hand, as we show below, for strongly ergodic actions the situation is completely different: if $\cX$ is strongly ergodic, then $\cX^{\alg}$ contains all distal factors of $\cX$. We will actually prove a stronger result by showing that algebraicity is witnessed by quantifier-free formulas (cf.\ Theorem~\ref{th:main-lemma}).

Given a system $\cX$, a factor $\cZ$, and a tuple $F=(f_i)_{i\in I} \subset L^\infty(\cX)$, the \df{quantifier-free type of $(f_i)$ over $\cZ$} is given by the joint conditional distribution over $\cZ$ of the tuple of random variables $(\gamma \cdot f_i)_{\gamma \in \Gamma, i \in I}$. If $F$ and $F'$ have the same quantifier-free type over $\cZ$, we will write $F \equiv_\cZ F'$. In other words, if $F = (f_i)$ and $F' = (f'_i)$, we have that $F \equiv_\cZ F'$ iff for every $*$-polynomial $P(x_0, \ldots, x_{n-1})$ with coefficients from $L^\infty(\cZ)$, indices $i_0, \ldots, i_{n-1} \in I$ and $\gamma_0, \ldots, \gamma_{n-1} \in \Gamma$,
\begin{equation*}
  \E \big(P(\gamma_0 \cdot f_{i_0}, \ldots, \gamma_{n-1} \cdot f_{i_{n-1}})\big) = \E \big(P(\gamma_0 \cdot f_{i_0}', \ldots, \gamma_{n-1} \cdot f_{i_{n-1}}')\big).
\end{equation*}
In particular, the system generated by $\cZ$ and $F$ is isomorphic to the one generated by $\cZ$ and $F'$.

The following is one of our main results.

\begin{theorem}\label{th:main-lemma}
Suppose $\cZ\subseteq\cX$ is a distal extension with $\cX$ strongly ergodic and $f_0 \in L^\infty(\cX)$. Then the set $\set{f \in L^\infty(\cX) : f \equiv_\cZ f_0}$ is compact in the $L^2$-norm.
\end{theorem}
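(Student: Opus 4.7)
The plan is to proceed by transfinite induction on the distal rank $\alpha$ of the extension $\cZ \subseteq \cX$, using a fixed distal tower $\cZ = \cX_0 \subseteq \cX_1 \subseteq \cdots \subseteq \cX_\alpha = \cX$. To make the induction go through smoothly, I would strengthen the statement to tuples: for every $\bar f_0 = (f_{0,1},\ldots,f_{0,m}) \in L^\infty(\cX)^m$, the set of $\bar f$ with $\bar f \equiv_\cZ \bar f_0$ is $L^2$-compact. The base case $\alpha = 0$ is immediate, since $\bar f \equiv_\cZ \bar f_0$ with both tuples in $L^\infty(\cZ)^m$ forces $\bar f = \bar f_0$. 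The limit case will reduce to the successor cases by approximating $f_0$ in $L^2$ by elements of intermediate factors of strictly smaller rank, then combining the inductive hypothesis with a diagonal argument, using saturation to realize joint types of the approximation together with the original element.

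The crux is the successor case: given a compact extension $\cY \subseteq \cX$ with $\cX$ still strongly ergodic, and assuming the tuple statement for the distal extension $\cZ \subseteq \cY$, one proves it for $\cZ \subseteq \cX$. Fix $f_0 \in L^\infty(\cX)$ and $\epsilon > 0$, and denote $S := \{f \in L^\infty(\cX) : f \equiv_\cZ f_0\}$. Using the decomposition $L^2(\cX) = \bigoplus_i M_i$ into finitely generated, $\Gamma$-invariant $\cY$-modules (which exists because $\cY \subseteq \cX$ is compact), I pick a finite subsum $M = \bigoplus_{i \in J} M_i$ so that the $L^2$-orthogonal projection $F_0$ of $f_0$ onto $M$ satisfies $\|f_0 - F_0\|_2 < \epsilon$. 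By Proposition~\ref{p:exist-basis}, $M$ admits a finite $\cY$-basis $E_0 = (e_{0,1},\ldots,e_{0,k}) \subset L^\infty(\cX)$, and I expand $F_0 = \sum_s a_{0,s} e_{0,s}$ with $a_{0,s} = \ip{F_0, e_{0,s}}_\cY \in L^\infty(\cY)$ (using Lemma~\ref{l:inner-prod-Z} and Cauchy--Schwarz to verify $L^\infty$-bounds).

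Next, I would pass to a sufficiently saturated elementary extension $\cX^* \succeq \cX$, which remains ergodic thanks to strong ergodicity of $\cX$. For each $f \in S$, I realize the joint quantifier-free type of $(f_0, F_0, E_0)$ over $\cZ$ with $f_0$ replaced by $f$: there exist $F, E = (e_1,\ldots,e_k) \in L^\infty(\cX^*)$ such that $(f, F, E) \equiv_\cZ (f_0, F_0, E_0)$. In particular, $\|f - F\|_2 < \epsilon$, the tuple $E$ is $\cY^*$-orthonormal with matrix coefficients $m_E = m_{E_0}$ (over $\cY^*$), and $F = \sum_s a_s e_s$ with $(a_s)_s \equiv_\cZ (a_{0,s})_s$ in $L^\infty(\cY^*)^k$. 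By Lemma~\ref{l:dim-at-most-n} applied in the ergodic system $\cX^*$, as $f$ varies over $S$ the possible tuples $E$ span a subspace of $L^2(\cX^*)^{\oplus k}$ of dimension at most $k$ and are uniformly bounded in $L^\infty$, hence form an $L^2$-precompact set; by the tuple inductive hypothesis applied in a saturated extension of $\cY$, the tuples $(a_s)_s$ also range over an $L^2$-precompact set. Combining, the associated tuples $F$ range over a precompact subset of $L^2$, so $S$ can be covered by finitely many balls of radius $2\epsilon$. Since $\epsilon > 0$ is arbitrary and $S$ is closed, it is compact.

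The main obstacle is the type-realization step in the elementary extension: one must verify that the joint $\cZ$-type of $(f_0, F_0, E_0)$ can indeed be realized in $\cX^*$ with a prescribed $f \in S$ in the first coordinate, while ensuring that the tuple $(a_s)_s$ then has exactly the original $\cZ$-type of $(a_{0,s})_s$, so that the inductive hypothesis applies. A related subtlety is that the tuple version of the inductive hypothesis must be available in a saturated extension of $\cY$, and that $\cX^*$ (hence $\cY^*$) inherits enough structure, notably ergodicity and the finitely generated module decomposition underlying Lemma~\ref{l:dim-at-most-n}; this is precisely where the strong ergodicity hypothesis is essential, as it guarantees that every elementary extension of $\cX$ remains ergodic and that the compact extension structure persists in $\cX^*$.
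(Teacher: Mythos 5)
Your proposal takes a genuinely different route from the paper (transfinite induction on distal rank versus a one-shot cardinality argument), but it contains a gap in the successor step that I believe is conceptual rather than merely technical, and I do not see how to repair it.

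The problem is the claim that, after realizing $(f, F, E) \equiv_\cZ (f_0, F_0, E_0)$ in $\cX^*$, the tuple $E$ will be $\cY^*$-orthonormal with matrix coefficients $m_E = m_{E_0}$. The matrix coefficients $\ip{\gamma e_i, e_j}_{\cY}$ are conditional expectations \emph{over $\cY$}, and these are not controlled by the quantifier-free type over $\cZ$: since $\cZ \subsetneq \cY$ in general, two tuples with the same quantifier-free $\cZ$-type can sit in completely different $\cY$-modules and have unrelated matrix coefficients. The same issue already affects the more basic estimate $\|f - F\|_2 < \eps$: the distance from $f$ to the fixed finite-rank module $M$ is a property of the joint distribution of $f$ with $\cY$, not with $\cZ$, so there is no reason it is uniform over $S = \set{f : f \equiv_\cZ f_0}$. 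As a consequence, Lemma~\ref{l:dim-at-most-n} cannot be invoked (it requires a \emph{fixed} set of matrix coefficients), and the precompactness of the $E$'s does not follow. In short, the decomposition "module part over $\cY$ $+$ coefficients over $\cZ$" does not interface with the hypothesis $f \equiv_\cZ f_0$, and there is no transitivity mechanism for quantifier-free types over an intermediate factor that would make the induction on distal rank go through. (You flag the type-realization step as the "main obstacle" and indeed that step, too, needs justification — it amounts to showing that a certain ergodic joining of $\cB_0 = \cZ\langle f_0, F_0, E_0\rangle$ and $\cX$ over the "twisted" identification $\cZ\langle f_0 \rangle \cong \cZ\langle f \rangle$ is weakly contained in $\cX$, which is not automatic — but the matrix-coefficient issue above would remain even if that were resolved.)

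It is worth noting that an induction on distal rank along your lines \emph{does} work if one weakens the goal to Theorem~\ref{th:distal-strongly-erg-acl}, i.e.\ the statement about (full) algebraic closure rather than quantifier-free types: one shows that a compact extension of $\cZ$ is contained in $\acl^\cX(\cZ)$ via Lemma~\ref{l:dim-at-most-n} (precisely what Remark~\ref{rem:after-main-lemma} says), and then iterates using the transitivity $\acl(\acl(A)) = \acl(A)$, which is exactly the transfer mechanism missing for quantifier-free types. By contrast, the paper's proof of Theorem~\ref{th:main-lemma} avoids induction entirely: assuming $S$ is not compact, one has infinitely many $\eps$-separated realizations of the quantifier-free type of $f_0$ over $\cZ$; by compactness one blows this up to $\kappa$-many in a saturated (hence, by strong ergodicity, still ergodic) elementary extension, and Propositions~\ref{p:factor-of-distal-is-distal} and~\ref{p:factor-generated-by-distal-factors} show they generate a distal extension of $\cZ$ of density at least $\kappa$ — contradicting the uniform bound on the density of distal extensions supplied by Theorem~\ref{th:distal-god}. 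That cardinality argument does not require decomposing through an intermediate factor, which is precisely what creates the trouble in your approach.
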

\begin{proof}
  It is clear that the set is closed, so it is enough to prove that it is totally bounded. Suppose for contradiction that there exist $\eps > 0$ and $\set{f_i : i < \omega} \subseteq L^\infty(\cX)$ such that $f_i \equiv_\cZ f_0$ for all $i$ and $\nm{f_i - f_j}_2 \geq \eps$ for all $i \neq j$. Since $\cX$ is a distal extension of $\cZ$, by Proposition~\ref{p:factor-of-distal-is-distal}, the factor generated by $\cZ$ and $f_0$ is a distal extension of $\cZ$.

In view of Theorem~\ref{th:distal-god}, let $\kappa$ be a cardinal larger than the density of any ergodic, distal extension of $\cZ$. It follows from the compactness theorem that there exists an elementary extension $\cX'$ of $\cX$ and a sequence $(f'_i)_{i<\kappa}\subseteq L^\infty(\cX')$ such that $f_i'\equiv_\cZ f_0$ for all $i<\kappa$ and $\nm{f_i' - f_j'} \geq \eps$ for $i \neq j$. The first condition ensures that for every $i < \kappa$, the factor generated by $f'_i$ over $\cZ$ is a distal extension of $\cZ$. Hence, by Proposition~\ref{p:factor-generated-by-distal-factors}, the factor $\cZ'$ generated by all the $f'_i$ over $\cZ$ is also a distal extension of $\cZ$ and, moreover, $|\cZ'|\geq\kappa$ since $(f'_i)_{i<\kappa}$ is separated. On the other hand, $\cX'$ is ergodic since $\cX$ is strongly ergodic, and hence so is $\cZ'\subseteq \cX'$. This contradicts the choice of $\kappa$.
\end{proof}
\begin{remark}
  \label{rem:after-main-lemma}
  If in Theorem~\ref{th:main-lemma} one assumes that the extension $\cZ \sub \cX$ is compact rather than distal, one does not need the strong ergodicity assumption on $\cX$ and the result follows easily from Lemma~\ref{l:dim-at-most-n}. Thus a more direct approach to Theorem~\ref{th:distal-strongly-erg-acl} below and many of the applications would use Lemma~\ref{l:dim-at-most-n} and then rely on the transitivity of the algebraic closure operator.
\end{remark}

Theorem~\ref{th:main-lemma} has the following model-theoretic consequence.
\begin{theorem}
  \label{th:distal-strongly-erg-acl}
  Let $\cX$ be a strongly ergodic system and $\cZ \sub \cX$ be a factor. Then any distal extension of $\cZ$ in $\cX$ is contained in $\acl^\cX(\cZ)$.
\end{theorem}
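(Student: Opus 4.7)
The plan is to reduce the theorem to Theorem~\ref{th:main-lemma} via the standard model-theoretic characterization of the algebraic closure in continuous logic: an element $b$ is algebraic over a set $A$ precisely when the set of realizations of $\tp(b/A)$ is compact in some sufficiently saturated elementary extension.

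First, I would fix a distal extension $\cZ \sub \cY \sub \cX$ and observe that $\cY$ is strongly ergodic, since strong ergodicity passes to factors. By the bi-interpretability between $\cX$ and $L^\infty(\cX)$ recalled in Remark~\ref{rem:Linfty}, it suffices to show that every $f_0 \in L^\infty(\cY)$ belongs to $\acl^\cX(\cZ)$. I would then take a sufficiently saturated elementary extension $\cX' \succeq \cX$; strong ergodicity is preserved under elementary extensions (every elementary extension of a strongly ergodic system is ergodic), so $\cX'$ is strongly ergodic. Letting $\cY' \sub \cX'$ be the maximal distal extension of $\cZ$ in $\cX'$ (given by Proposition~\ref{p:factor-generated-by-distal-factors}), the fact that $\cY$ is itself a distal extension of $\cZ$ sitting inside $\cX'$ forces $\cY \sub \cY'$; in particular $f_0 \in L^\infty(\cY')$, and $\cY'$ is strongly ergodic.

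The core step is to apply Theorem~\ref{th:main-lemma} to the distal extension $\cZ \sub \cY'$, which yields that the set $K = \{f \in L^\infty(\cY') : f \equiv_\cZ f_0\}$ is $L^2$-compact. I would then argue that no realization of the quantifier-free type of $f_0$ over $\cZ$ in $L^\infty(\cX')$ escapes $\cY'$: any such $f$ generates a factor over $\cZ$ which is $\cZ$-isomorphic to $\cZ\langle f_0\rangle$, hence is a distal extension of $\cZ$ contained in $\cY'$ by maximality. This identifies the qf-realization set in $\cX'$ with $K$, which is compact. Since the set of realizations of the full type $\tp^{\cX'}(f_0/\cZ)$ is a metric-closed subset of $K$, it is also compact, giving $f_0 \in \acl^{\cX'}(\cZ) = \acl^\cX(\cZ)$.

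The substantive ingredient is Theorem~\ref{th:main-lemma}; the remainder is a careful bookkeeping argument through saturated elementary extensions. The subtle point to verify is precisely that the quantifier-free type realizations in the elementary extension remain trapped in the maximal distal factor $\cY'$, which is what allows Theorem~\ref{th:main-lemma} to be invoked inside $\cX'$ rather than only inside $\cX$ itself.
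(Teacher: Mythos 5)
Your proposal is correct and takes essentially the same approach as the paper: pass to a saturated elementary extension $\cX'$, observe that every $g$ with $g \equiv_\cZ f_0$ generates a distal sub-extension of $\cZ$ in $\cX'$, and apply Theorem~\ref{th:main-lemma} inside a strongly ergodic distal extension of $\cZ$ in $\cX'$ that traps all quantifier-free realizations. The only cosmetic difference is that you take the maximal distal extension $\cY'$ of $\cZ$ in $\cX'$, whereas the paper takes the factor generated by $\cZ$ together with the set of quantifier-free realizations itself; both choices rely on the same observation via Propositions~\ref{p:factor-of-distal-is-distal} and~\ref{p:factor-generated-by-distal-factors}.
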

\begin{proof}
  If $f, g \in L^\infty(\cX)$, we will write $\tp^\cX(f/\cZ) = \tp^\cX(g/\cZ)$ if $f$ and $g$ have the same type over $\cZ$, i.e., for any first-order formula $\phi(v)$ with parameters from $\cZ$, we have $\phi^\cX(f) = \phi^\cX(g)$. Let $\cX'$ be a sufficiently saturated elementary extension of $\cX$.
  Recall that for $f \in L^\infty(\cX)$,
  \begin{equation*}
    f \in \acl^\cX(\cZ) \iff \set{g \in L^\infty(\cX') : \tp^{\cX'}(g/\cZ) = \tp^{\cX'}(f/\cZ)} \text{ is compact}.
  \end{equation*}
  
  Let now $\cZ \sub \cY \sub \cX$ with $\cZ \sub \cY$ distal and let $f \in L^\infty(\cY)$. Note that
  \begin{equation}
    \label{eq:1}
    \set{g \in L^\infty(\cX') : \tp^{\cX'}(g/\cZ) = \tp^{\cX'}(f/\cZ)} \sub \set{g \in L^\infty(\cX') : g \equiv_\cZ f}.
  \end{equation}
  Call the set on the right-hand side of \eqref{eq:1} $A$ and let $\cW$ be the factor of $\cX'$ generated by $A$ and $\cZ$. By Propositions~\ref{p:factor-of-distal-is-distal} and \ref{p:factor-generated-by-distal-factors}, the extension $\cZ \sub \cW$ is distal. Furthermore, $\cW$ is strongly ergodic as a factor of the strongly ergodic system $\cX'$. So by Theorem~\ref{th:main-lemma}, $A$ is compact and thus $f \in \acl^{\cX}(\cZ)$.
\end{proof}

These results have a number of corollaries.
\begin{cor}
  \label{c:Aut(distal)-is-compact}
  Let $\cZ\subseteq \cX$ be a distal extension with $\cX$ strongly ergodic. Then $\Aut_\cZ(\cX)$ is compact.
\end{cor}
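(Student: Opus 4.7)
The plan is to adapt the argument of Proposition~\ref{p:compact-aut-group}, replacing the finite-dimensional input from Lemma~\ref{l:dim-at-most-n} with the orbital compactness supplied by Theorem~\ref{th:main-lemma}.

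First, I would verify the key observation that every $\sigma \in \Aut_\cZ(\cX)$ preserves the quantifier-free type over $\cZ$: for any $f \in L^\infty(\cX)$, we have $\sigma(f) \equiv_\cZ f$. This is a routine check, using that $\sigma$ is a $*$-algebra automorphism commuting with $\Gamma$, fixing $L^\infty(\cZ)$ pointwise, and preserving the measure; these three properties imply that $\sigma$ preserves the conditional expectation over $\cZ$ of any $*$-polynomial in the translates $\gamma \cdot f$ with coefficients from $L^\infty(\cZ)$.

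Combining this with Theorem~\ref{th:main-lemma}, I conclude that every orbit $\Aut_\cZ(\cX) \cdot f$ is contained in the $L^2$-compact set $\{g \in L^\infty(\cX) : g \equiv_\cZ f\}$. Specialising to indicator functions of elements $a \in \cX$, this yields relative $d_\mu$-compactness of every orbit $\Aut_\cZ(\cX) \cdot a$ in the measure algebra.

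To finish, I would follow the standard topological argument already used in Proposition~\ref{p:compact-aut-group}: the pointwise convergence topology makes the map $\sigma \mapsto (\sigma(a))_{a \in \cX}$ a topological embedding of $\Aut_\cZ(\cX)$ into $\cX^\cX$, with closed image (since being a $\Gamma$-equivariant Boolean automorphism fixing $\cZ$ is a pointwise-closed condition). By the orbital compactness, the image lies in $\prod_{a \in \cX} \overline{\Aut_\cZ(\cX) \cdot a}$, which is compact by Tychonoff. Hence $\Aut_\cZ(\cX)$ is compact. The main content is entirely absorbed into Theorem~\ref{th:main-lemma}; no substantive obstacle remains, and the only point requiring any care is the type preservation step, which reduces to compatibility of $\sigma$ with conditional expectation over $\cZ$.
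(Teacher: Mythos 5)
Your proof follows the natural route (the paper does not spell out an argument for this corollary, but it is clearly meant to be derived from Theorem~\ref{th:main-lemma} by exactly the orbit-compactness argument you give, in parallel with Proposition~\ref{p:compact-aut-group}). Steps one and two are correct: automorphisms over $\cZ$ preserve quantifier-free types over $\cZ$, so by Theorem~\ref{th:main-lemma} every $\Aut_\cZ(\cX)$-orbit is precompact.

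The one point that needs more care than you give it is the assertion that the image of $\Aut_\cZ(\cX)$ in $\prod_{a\in\cX}\overline{\Aut_\cZ(\cX)\cdot a}$ is closed. Being a $\Gamma$-equivariant, measure-preserving Boolean \emph{embedding} fixing $\cZ$ is indeed a pointwise-closed condition, but surjectivity is not: a pointwise limit of automorphisms is a priori only a self-embedding of $\cX$ over $\cZ$. Two standard fixes are available. One is to invoke Theorem~\ref{th:distal-coalescent} (coalescence over $\cZ$), which says precisely that such self-embeddings are automorphisms; note, though, that the paper proves coalescence \emph{after} this corollary and derives it from Theorem~\ref{th:main-lemma}, so if you take this route you should make sure not to create a circular dependence (it doesn't, but it's worth noting). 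The other, more self-contained fix is the usual compact-group trick: given a net $\sigma_i\to\phi$ in the closure, pass to a subnet so that $\sigma_i^{-1}$ also converges, say to $\psi$; since composition is jointly continuous on isometries with pointwise convergence, $\phi\circ\psi=\psi\circ\phi=\id$, so $\phi$ is an automorphism. With either fix the argument is complete, and it is, in substance, the argument the authors intend.
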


\begin{cor}
  \label{c:distal-is-in-Xalg}
  Let $\cX$ be a strongly ergodic system and let $\cD$ be its maximal distal factor. Then $\cD \sub \cX^{\alg}$. In particular, $\cD$ is separable.
\end{cor}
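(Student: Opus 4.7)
The plan is to deduce this as an essentially immediate consequence of Theorem~\ref{th:distal-strongly-erg-acl}, followed by a standard density-character bound on the algebraic closure in a separable language.

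For the first statement, I would apply Theorem~\ref{th:distal-strongly-erg-acl} with $\cZ$ equal to the trivial one-point factor. The maximal distal factor $\cD$ exists by Proposition~\ref{p:factor-generated-by-distal-factors} applied to the collection of all distal factors of $\cX$ (the trivial factor is distal, so the family is non-empty, and any factor generated by distal factors of $\cX$ over the trivial system is again distal). Since $\cD$ is a distal extension of the trivial factor sitting inside the strongly ergodic system $\cX$, Theorem~\ref{th:distal-strongly-erg-acl} immediately yields $\cD \sub \acl^\cX(\emptyset) = \cX^{\alg}$.

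For the ``in particular'' part, I would invoke the general fact from continuous logic that for any structure $M$ in a language of density character $\lambda$ and any set $A \sub M$, the density character of $\acl^M(A)$ is bounded by $|A| + \lambda + \aleph_0$. Concretely, each element of $\acl^M(A)$ lies in a compact $A$-definable set, and compact sets in a separable metric structure are separable; combining this over the countably many definable predicates (using that $\Gamma$ is countable, so the signature of $\PMP_\Gamma$ as described in Section~\ref{sec:lang-basic-theory} is countable) gives separability of $\acl^\cX(\emptyset)$. Therefore $\cX^{\alg}$ is separable, and hence so is $\cD$.

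There is no real obstacle here; the entire content has been done in Theorem~\ref{th:distal-strongly-erg-acl}. The only subtlety worth pausing on is ensuring that the maximal distal factor $\cD$ is well-defined in the non-separable setting, but this is already handled by Proposition~\ref{p:factor-generated-by-distal-factors}, and that separability of $\acl^\cX(\emptyset)$ is a general feature of algebraic closures in countable languages, not requiring any ergodic-theoretic input.
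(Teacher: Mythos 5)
Your proof is correct and takes the same route as the paper: the inclusion $\cD \sub \cX^{\alg}$ is the special case $\cZ = 1$ of Theorem~\ref{th:distal-strongly-erg-acl}, and the separability of $\cD$ follows from the general fact that $\acl^M(\emptyset)$ is separable whenever the language is countable. The paper's proof consists of exactly that last sentence and leaves the first inclusion implicit, so you have only spelled out what the paper compresses.
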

\begin{proof}
The algebraic closure of the empty set is always separable if the language is countable.
\end{proof}
\begin{remark}
  \label{rem:Bohr-T}
  Corollary~\ref{c:distal-is-in-Xalg} implies that any distal, ergodic system of a property~(T) group is separable. In particular, the Bohr compactification of a property (T) group $\Gamma$ is second countable (or, which is equivalent, $\Gamma$ admits only countably many finite-dimensional unitary representations). This is a result of Wang~\cite{Wang1975}.
\end{remark}

\begin{cor}
  \label{c:Robin}
  Suppose $\cX$ is distal, strongly ergodic and the action $\Aut(\cX)\actson \cX$ is ergodic. Then $\cX$ is compact.
\end{cor}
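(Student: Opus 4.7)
\emph{Proof sketch.} The plan is to combine compactness of $\Aut(\cX)$ with the classical realization of ergodic compact group actions as homogeneous spaces, and then conclude via the Peter--Weyl decomposition.

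First, I would apply Corollary~\ref{c:Aut(distal)-is-compact} with $\cZ$ the trivial one-point factor: since $\cX$ is distal and strongly ergodic, the group $G \coloneqq \Aut(\cX)$ is compact. Corollary~\ref{c:distal-is-in-Xalg} shows moreover that $\cX$ is separable (as $\cX$ coincides with its own maximal distal factor), and since $G$ embeds as a closed subgroup of the Polish group $U(L^2(\cX))$, it follows that $G$ is a compact, second-countable topological group.

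Next, as $G$ acts measurably, measure-preservingly and ergodically on the standard probability space underlying $\cX$, a classical theorem (due essentially to Mackey) identifies the $G$-system $\cX$ with the natural $G$-action on a homogeneous space $G/H$ equipped with Haar measure, for some closed subgroup $H \leq G$. By Peter--Weyl, $L^2(G/H)$ then decomposes as a Hilbert direct sum of $G$-isotypic components, each of which is finite-dimensional (the $\pi$-isotypic component has dimension $\dim(\pi) \cdot \dim (V_\pi^*)^H$). By definition of $G = \Aut(\cX)$, the $\Gamma$-action on $L^2(\cX)$ commutes with the $G$-action, so every $G$-isotypic component is $\Gamma$-invariant. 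This exhibits $L^2(\cX)$ as a Hilbert direct sum of finite-dimensional $\Gamma$-invariant subspaces, which is exactly the statement that $\cX$ is compact.

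The one step that requires some care is the invocation of the homogeneous-space theorem, which genuinely uses that $\cX$ is separable; this is precisely what Corollary~\ref{c:distal-is-in-Xalg} provides (via the model-theoretic Theorem~\ref{th:distal-strongly-erg-acl}), so the argument goes through without extra hypotheses.
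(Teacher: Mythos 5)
Your proof is correct, but it takes a genuinely different route from the paper's. Where you invoke the classical Mackey-type realization of ergodic actions of compact second-countable groups as homogeneous spaces $G/H$ (and then read off finite-dimensionality of isotypic components from the Peter--Weyl decomposition of $L^2(G/H)$), the paper instead applies its in-house Lemma~\ref{l:dim-at-most-n} directly to the compact group $K=\Aut(\cX)$, treated as the ``acting group'': decomposing $L^2(\cX)$ into irreducible $K$-representations via Peter--Weyl, ergodicity of $K\actson\cX$ together with Lemma~\ref{l:dim-at-most-n} bounds the multiplicity of each irreducible, so the $K$-isotypic components $E_i$ are finite-dimensional, and since $\Gamma$ commutes with $K$ they are $\Gamma$-invariant. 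The two routes arrive at exactly the same key fact (finite-dimensional, $\Gamma$-invariant isotypic components), so the end of the argument is identical. The trade-off is this: your approach leans on classical ergodic theory (point realization and the homogeneous-space theorem), which forces you to first secure separability and second-countability --- you do this correctly via Corollary~\ref{c:distal-is-in-Xalg} and the closed embedding $\Aut(\cX)\hookrightarrow U(L^2(\cX))$ --- whereas the paper's Lemma~\ref{l:dim-at-most-n} makes no separability assumption and keeps the argument entirely internal to the machinery already developed (this matters given the paper's general stance of avoiding separability hypotheses). Both proofs are valid; the paper's is more self-contained, yours is perhaps more transparent to a reader who already knows the Mackey picture.
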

\begin{proof}
By Corollary~\ref{c:Aut(distal)-is-compact}, $K=\Aut(\cX)$ is compact. Let $L^2(\cX)=\bigoplus_i M_i$ be a decomposition into irreducible representations of $K$. For each $i$, let $E_i$ denote the subspace generated by all the subrepresentations of $K \actson L^2(\cX)$ isomorphic to $M_i$. The ergodicity assumption and Lemma~\ref{l:dim-at-most-n} (which works for actions of arbitrary groups) imply that the $E_i$ are finite-dimensional. They are also  $\Gamma$-invariant and generate $L^2(\cX)$. This shows that $\cX$ is a compact system, as required.
\end{proof}

In \cite{Gaboriau2016p}, Gaboriau, Ioana and Tucker-Drob prove a cocycle superrigidity result for compact actions of product groups and countable targets. Corollary~\ref{c:Robin} above implies the following parallel result. We are grateful to Robin Tucker-Drob for suggesting this corollary.
\begin{cor}
  \label{c:Robin2}
  Let $\Gamma$ and $\Lambda$ be countable groups and $\Gamma \times \Lambda \actson X$ be a distal action such that the action $\Gamma \actson X$ is strongly ergodic and the action $\Lambda \actson X$ is ergodic. Then the original action $\Gamma \times \Lambda \actson X$ is compact.
\end{cor}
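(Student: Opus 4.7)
The plan is to reduce the statement to Corollary~\ref{c:Robin} applied to the $\Gamma$-action on $\cX$, and then to extract the $\Lambda$-invariance from a closer reading of the proof of that corollary. The two hypotheses of Corollary~\ref{c:Robin} must be verified for the $\Gamma$-system $\cX$. Strong ergodicity is given. For distality, I would argue that any $\Gamma\times\Lambda$-distal tower $(\cX_\beta)_{\beta\leq\alpha}$ remains a distal tower after restricting the action to $\Gamma$: indeed, if $L^2(\cX_{\beta+1})$ decomposes as a sum of finitely generated, $\Gamma\times\Lambda$-invariant $\cX_\beta$-modules, each such module is automatically finitely generated and $\Gamma$-invariant, hence the same decomposition witnesses compactness of the extension as a $\Gamma$-extension.

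Next, since $\Gamma$ and $\Lambda$ commute inside $\Gamma\times\Lambda$, the action of $\Lambda$ on $\cX$ factors through the group $\Aut(\Gamma\actson\cX)$. As $\Lambda\actson\cX$ is ergodic, so is the action of the larger group $\Aut(\Gamma\actson\cX)\actson\cX$. Corollary~\ref{c:Robin} then yields that $\Gamma\actson\cX$ is compact, but this is weaker than what we want: we need compactness as a $\Gamma\times\Lambda$-system, i.e.\ a decomposition of $L^2(\cX)$ into finite-dimensional subspaces invariant under both $\Gamma$ and $\Lambda$.

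To conclude, I would revisit the argument of Corollary~\ref{c:Robin} itself rather than merely quote its statement. Setting $K=\Aut(\Gamma\actson\cX)$, which is a compact group by Corollary~\ref{c:Aut(distal)-is-compact}, the proof identifies the $K$-isotypic components $E_i$ of $L^2(\cX)$ and uses Lemma~\ref{l:dim-at-most-n} together with the ergodicity of $K\actson\cX$ to show that each $E_i$ is finite-dimensional; these components are $\Gamma$-invariant because $\Gamma$ commutes with $K$, and they span $L^2(\cX)$. The key observation I would make is that the $E_i$ are by construction also $K$-invariant, hence $\Lambda$-invariant, and therefore constitute a decomposition of $L^2(\cX)$ into finite-dimensional $\Gamma\times\Lambda$-invariant subspaces. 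This directly establishes that $\Gamma\times\Lambda\actson\cX$ is compact. No step here seems to present real difficulty; the main point is simply to recognise that Robin's corollary provides more than its bare statement, and that the ergodicity of $\Lambda$ both feeds the hypotheses of that corollary and provides the $\Lambda$-invariance needed to upgrade the conclusion.
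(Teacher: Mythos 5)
Your proof is correct, and it fills in the argument that the paper leaves completely implicit (the paper merely asserts that Corollary~\ref{c:Robin} implies the statement). You correctly identify the one nontrivial point: the bare statement of Corollary~\ref{c:Robin} gives compactness of $\cX$ only as a $\Gamma$-system, which is not yet what is asked. Your resolution---re-opening the proof of that corollary and observing that the $K$-isotypic components $E_i$ of $L^2(\cX)$, for $K=\Aut(\Gamma\actson\cX)$, are finite-dimensional, mutually orthogonal, $\Gamma$-invariant (since $\Gamma$ commutes with $K$), and $K$-invariant hence $\Lambda$-invariant (since the image of $\Lambda$ lies in $K$)---is correct and directly yields the desired decomposition into finite-dimensional $\Gamma\times\Lambda$-invariant subspaces. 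Your preliminary reductions are also sound: a $\Gamma\times\Lambda$-distal tower restricts to a $\Gamma$-distal tower because every $\Gamma\times\Lambda$-invariant finitely generated module is in particular $\Gamma$-invariant, and ergodicity of $\Lambda\actson\cX$ passes to the larger group $K\actson\cX$. A slightly more economical route, which uses only the statement of Corollary~\ref{c:Robin}, is to recall that $\cX$ is a compact $\Gamma$-system if and only if the image of $\Gamma$ in $U(L^2(\cX))$ is precompact; since the closure $G$ of that image is compact and commutes with the compact group $K\supseteq\mathrm{image}(\Lambda)$, the product $GK$ is a compact subgroup of $U(L^2(\cX))$ containing the image of $\Gamma\times\Lambda$, whence the $\Gamma\times\Lambda$-system is compact. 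Both arguments are essentially the same in spirit; yours makes the finite-dimensional pieces explicit, while the alternative avoids reopening the earlier proof at the cost of invoking the precompactness characterisation of compact systems.
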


The following is the main result of \cite{Ioana2016} (for separable systems).
\begin{cor}[Ioana--Tucker-Drob]
  \label{c:full-Ioana-TD}
  Let $\cX$ be a distal system and $\cY$ be strongly ergodic. If $\cX <_w \cY$, then $\cX$ is a factor of $\cY$.
\end{cor}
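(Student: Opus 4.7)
The plan is to combine the model-theoretic characterization of weak containment from Lemma~\ref{l:wcon-existential} with the main algebraic closure result, Theorem~\ref{th:distal-strongly-erg-acl}. Once those are in hand, the corollary becomes a short formal deduction.

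First, I would apply Lemma~\ref{l:wcon-existential} to the hypothesis $\cX <_w \cY$ to obtain an elementary extension $\cY'$ of $\cY$ with $\cX \sub \cY'$ as a factor. Strong ergodicity is preserved under elementary extensions (this is the content of the proposition in the subsection on strongly ergodic actions), so $\cY'$ is strongly ergodic as well. Since $\cX$ is distal, it is a distal extension of the trivial factor inside the strongly ergodic system $\cY'$. Applying Theorem~\ref{th:distal-strongly-erg-acl} with $\cZ$ the trivial factor then yields $\cX \sub \acl^{\cY'}(\emptyset)$.

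The remaining step is to transfer this inclusion back to $\cY$ by establishing $\acl^{\cY'}(\emptyset) \sub \cY$. This is a general fact in continuous logic: if $\cY \prec \cY'$ and $D \sub \cY'$ is a compact $\emptyset$-definable set, then $D \sub \cY$. Indeed, compactness of $D$ means that for each $\eps > 0$ there is some $n$ such that $D$ can be covered by $n$ balls of radius $\eps$; this property is first-order expressible in terms of the distance predicate to $D$, so elementarity produces finite $\eps$-nets for $D$ inside $\cY$ for every $\eps > 0$. Since $\cY$ is closed in $\cY'$ (being a complete metric structure) and contains dense approximations of every point of $D$, we conclude $D \sub \cY$. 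Combining the two inclusions, $\cX \sub \acl^{\cY'}(\emptyset) \sub \cY$, which is exactly the statement that $\cX$ is a factor of $\cY$.

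There is no real obstacle once Theorem~\ref{th:distal-strongly-erg-acl} is available: the entire proof is a three-step model-theoretic formality, and the only point requiring any care is the standard observation that $\acl(\emptyset)$ is common to all elementarily equivalent models, used in the final transfer from $\cY'$ back to $\cY$.
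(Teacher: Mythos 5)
Your proof is correct and follows the paper's argument essentially verbatim: pass from $\cX <_w \cY$ to a factor embedding into an elementary extension $\cY'$ via Lemma~\ref{l:wcon-existential}, invoke Theorem~\ref{th:distal-strongly-erg-acl} to get $\cX \subseteq \acl^{\cY'}(\emptyset)$, then use invariance of the algebraic closure under elementary extensions to conclude $\cX \subseteq \acl^{\cY}(\emptyset) \subseteq \cY$. You spell out two steps the paper leaves implicit (that $\cY'$ inherits strong ergodicity, and the $\eps$-net argument showing $\acl^{\cY'}(\emptyset) \subseteq \cY$), but the route is the same.
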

\begin{proof}
  By Lemma~\ref{l:wcon-existential}, the condition $\cX <_w \cY$ says that $\cX$ embeds as a factor of some elementary extension $\cY'$ of $\cY$. Thus, by Theorem~\ref{th:distal-strongly-erg-acl}, $\cX \sub \cY'^{\alg}$. As the algebraic closure does not change with elementary extensions, this implies that $\cX \sub \cY^{\alg} \sub \cY$.
\end{proof}

The previous corollaries all follow from Theorem~\ref{th:distal-strongly-erg-acl}. Next we have an application that uses the full power of the Theorem~\ref{th:main-lemma}. Note that every ergodic, compact system is coalescent but this fails in general for ergodic, distal systems; see \cite{Parry1970} for an example of a non-coalescent ergodic $\Z$-action of distal rank $2$.

\begin{theorem}
  \label{th:distal-coalescent}
  Every strongly ergodic, distal system is coalescent. More generally, if $\cZ\subseteq \cX$ is a distal extension with $\cX$ strongly ergodic, then $\cX$ is coalescent over $\cZ$.
\end{theorem}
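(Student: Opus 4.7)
The plan is to reduce coalescence to a standard fact about self-isometries of compact metric spaces, using Theorem~\ref{th:main-lemma} to supply the compact sets.

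First I would fix a $\cZ$-embedding $\phi\colon \cX\to\cX$ and, for each $f\in L^\infty(\cX)$, consider the set $K_f \coloneqq \set{g\in L^\infty(\cX) : g\equiv_\cZ f}$. By Theorem~\ref{th:main-lemma}, $K_f$ is compact in the $L^2$-norm. The key observation is that $\phi$ preserves $\equiv_\cZ$: since $\phi$ is a $\Gamma$-equivariant, measure-preserving embedding of algebras fixing $\cZ$ pointwise, it preserves the conditional distribution over $\cZ$ of any tuple of the form $(\gamma\cdot f)_{\gamma\in\Gamma}$. Hence $\phi(K_f)\subseteq K_f$, and $\phi\restriction K_f$ is an $L^2$-isometry of the compact set $K_f$ into itself.

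Next I would invoke the classical fact that a self-isometry of a compact metric space is surjective: if some $y\in K_f$ were at positive distance $\eps$ from $\phi(K_f)$, then the orbit $\phi^n(y)$ would be $\eps$-separated, contradicting compactness of $K_f$. Applying this to $y=f\in K_f$, we get $f\in \phi(K_f)\subseteq\phi(L^\infty(\cX))$. As this holds for every $f\in L^\infty(\cX)$, the embedding $\phi$ is surjective, and therefore an automorphism.

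I do not expect any serious obstacle: all the content is packaged into Theorem~\ref{th:main-lemma}. The only subtlety worth a sentence is checking that $\phi(f)\equiv_\cZ f$, which amounts to observing that $\phi$ commutes with the $\Gamma$-action and preserves $*$-polynomial expressions with coefficients in $L^\infty(\cZ)$, a direct consequence of $\phi$ being a $\cZ$-embedding in the language of Remark~\ref{rem:Linfty}.
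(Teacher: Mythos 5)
Your proof is correct and takes essentially the same approach as the paper: both rely on Theorem~\ref{th:main-lemma} to get compactness of the $\equiv_\cZ$-class and then observe that the iterates $\phi^n(f)$ would be $\eps$-separated if $f$ were not in the image. The only difference is that you factor this out as the classical fact that a self-isometry of a compact metric space is surjective, whereas the paper runs the $\eps$-separation argument inline.
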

\begin{proof}
Suppose $\phi\colon L^\infty(\cX)\to L^\infty(\cX)$ is a $\Gamma$-embedding that is the identity on $L^\infty(\cZ)$, but $f\in L^\infty(\cX)$ is not in the image of $\phi$. Let $\epsilon=d(f,\phi(L^\infty(\cX)))>0$. Then for all $i < j < \omega$, $\phi^i(f) \equiv_\cZ \phi^j(f)$ and $d(\phi^i(f),\phi^j(f)) \geq \epsilon$. This contradicts Theorem~\ref{th:main-lemma}.
\end{proof}
\begin{remark}
  \label{rem:Ioana}
  After reading a preliminary version of this paper, Adrian Ioana pointed out to us that it is possible to use \cite{Ioana2016}*{Lemma~2.6} (which implies that the semigroup of endomorphisms of a strongly ergodic, distal system is compact) instead of Theorem~\ref{th:main-lemma} in the proof above.
\end{remark}

\begin{cor}
  \label{c:distal-wequiv}
  Let $\cX$ and $\cY$ be two strongly ergodic, distal systems which are weakly equivalent. Then they are isomorphic.
\end{cor}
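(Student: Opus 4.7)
The plan is to deduce the corollary from the two preceding results: Corollary~\ref{c:full-Ioana-TD} (weak containment of a distal system in a strongly ergodic one promotes to a factor map) and Theorem~\ref{th:distal-coalescent} (strongly ergodic, distal systems are coalescent). Since these do the heavy lifting, the argument should be essentially a bookkeeping step.

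First, since $\cX$ is distal and $\cY$ is strongly ergodic and $\cX <_w \cY$, Corollary~\ref{c:full-Ioana-TD} produces an embedding $\iota \colon \cX \hookrightarrow \cY$ (realizing $\cX$ as a factor of $\cY$). Swapping the roles of $\cX$ and $\cY$ and applying the same corollary, the weak containment $\cY <_w \cX$ together with the fact that $\cY$ is distal and $\cX$ is strongly ergodic produces an embedding $\jmath \colon \cY \hookrightarrow \cX$.

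Next, consider the composition $\jmath \circ \iota \colon \cX \to \cX$. This is a $\Gamma$-equivariant embedding of $\cX$ into itself, i.e., an endomorphism of $\cX$. By Theorem~\ref{th:distal-coalescent}, $\cX$ is coalescent, so $\jmath \circ \iota$ is an automorphism. In particular it is surjective onto $\cX$, which forces $\jmath$ to be surjective as well; being already an embedding, $\jmath$ is therefore an isomorphism $\cY \xrightarrow{\sim} \cX$. (Symmetrically, $\iota \circ \jmath$ is an automorphism of $\cY$ by coalescence of $\cY$, which also gives that $\iota$ is an isomorphism.)

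There is no real obstacle here, since both ingredients are already in place; the only thing to be mindful of is the direction of the inclusions. The weak containment $\cX <_w \cY$ exhibits $\cX$ as a factor of (an ultrapower of) $\cY$, and Corollary~\ref{c:full-Ioana-TD} upgrades this to $\cX$ being an actual factor of $\cY$, which in our measure-algebra conventions corresponds to an embedding $\cX \hookrightarrow \cY$ of structures. With this convention fixed, the composition argument above works cleanly.
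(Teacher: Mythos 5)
Your proof is correct and follows the same route as the paper: bi-embeddability via Corollary~\ref{c:full-Ioana-TD}, then coalescence via Theorem~\ref{th:distal-coalescent} to promote the mutual embeddings to isomorphisms. The paper's proof is just a one-line version of the same argument you spelled out.
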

\begin{proof}
  By Corollary~\ref{c:full-Ioana-TD}, $\cX$ and $\cY$ are bi-embeddable, which combined with Corollary~\ref{th:distal-coalescent} yields that they are isomorphic.
\end{proof}

%%%%%%%%%%%%%%%%%%%%%%%%%%%%%%%%%%%%%%%%%%%%%%%%%%

\section{Proof by model-theoretic stability}
\label{sec:two-line-proof}

In this section, we give a simple proof of Theorem~\ref{th:distal-strongly-erg-acl} that only uses basic stability theory. It requires more familiarity with model-theoretic notions than the previous sections.

We will make use of the \emph{stable independence relation}, i.e., the ternary relation $\Findep[T]{}$ that can be defined for any (possibly unstable) complete theory $T$ and that corresponds to non-forking restricted to stable formulas. We refer the reader to \cite{BenYaacov2018}*{Section~2.3}, where this relation was considered in the classical, discrete setting, or to the more recent \cite{Iba2019}*{Section~1}, where the general metric case is discussed.

We recall some terminology. If $\phi(\bar u,\bar v)$ is a stable formula, $\bar a$ is a $\bar u$-tuple from a model $M\models T$, and $B\subseteq M$ is any subset, the \emph{$\phi$-canonical base} of $\bar a$ over $B$, denoted by $\Cb^M_\phi(\bar a/B)$, is the canonical parameter of the definition of the non-forking extensions of the $\phi$-type of $\bar a$ over $\acl^{M^\meq}(B)$. The $\phi$-canonical base is a \emph{metric imaginary element} of $M$; we denote by $M^\meq$ the expansion of $M$ by all the metric imaginary sorts. We always have $\Cb^M_\phi(\bar a/B)\in\acl^{M^\meq}(B)$. Then, given $\bar a,B,\bar c$ from $M$, we have that $\bar a\Findep[T]{B}\bar c$ (in words, \emph{$\bar a$ is stably independent from $\bar c$ over $B$}) iff for every stable formula $\phi$ we have $\Cb^M_\phi(\bar a/B\bar c)\in\acl^{M^\meq}(B)$, where $B\bar c$ is the union of $B$ and $\bar c$.

We also recall that non-forking extensions compatible with a given complete type always exist.

\begin{lemma}\label{l:indep-existence-prop}
For any small $\bar a,B,\bar c$ in a sufficiently saturated model $M\models T$, there exists $\bar a'$ in $M$ such that $\tp^M(\bar a'/B)=\tp^M(\bar a/B)$ and $\bar a'\Findep[T]{B} \bar c$.
\end{lemma}
\begin{proof}
See Ben Yaacov~\cite{BenYaacov2010b}*{Corollary~2.4}.
\end{proof}

If $T=\APA$ is the theory of atomless probability measure algebras, which is stable, the relation $\Findep[T]{}$ coincides with the usual notion of independence in probability theory, and we shall denote it simply by $\Findep{}$. We recall that if $\cA$ and $\cC$ are subalgebras of some ambient model of $\APA$ generated respectively by the tuples $\bar a$ and $\bar c$, and if $\cB$ is a subalgebra of $\cC$, then for any formula $\phi$ and $e \in \cA$, the $\phi$-canonical base of $e$ over $\cC$ is definable from $\E(\chi_e|\cC) \in L^\infty(\cC)$, and one has $\bar a\Findep{\cB}\bar c$ iff $\E(\chi_e|\cC)\in L^\infty(\cB)$ for every $e \in \cA$.

Given a $\Gamma$-system $\cX$, we will denote its complete first-order theory in the language of $\PMP_\Gamma$ by $T^\cX$. In the proof of the next lemma, we will also need to consider the reduct of $\cX$ to the language of $\APA$ (i.e., forget the action of $\Gamma$). We denote this reduct by $\cX|_\APA$. We also write $\cX|_\APA^\meq\coloneqq(\cX|_\APA)^\meq$. Note that the imaginary sorts of $\cX|_\APA$ are also imaginary sorts of $\cX$, so $\cX|_\APA^\meq$ is naturally included in $\cX^\meq$.

The following is an instance of a general fact about independence and reducts when the reduct enjoys weak elimination of imaginaries.

\begin{lemma}
  \label{l:Xmu-weak-elimination}
  Let $\cX$ be an atomless $\Gamma$-system, and let $\bar a,B,\bar c$ be taken from $\cX$. If $\bar a\Findep[T^\cX]{B} \bar c$, then $\bar a\Findep{\acl^\cX(B)}\bar c$.
\end{lemma}
\begin{proof}
Suppose $\bar a\Findep[T^\cX]{B} \bar c$, and fix a formula $\phi(\bar u,\bar v)$ in the language of $\APA$. Note that $\phi$ is stable. Let $e\coloneqq\Cb^\cX_\phi(\bar a/B\bar c)=\Cb^\cX_\phi(\bar a/\acl^\cX(B)\bar c)$ and $e'\coloneqq\Cb_\phi^{\cX|_\APA}(\bar a/\acl^\cX(B)\bar c)$. Observe that both $e$ and $e'$ belong to $\cX|_\APA^\meq$; in fact, up to fixing a uniform definition of $\phi$-types, we can see $e$ and $e'$ as elements of the same corresponding sort of canonical parameters. Since $\APA$ has weak elimination of imaginaries, there is a real tuple $\bar b\subset\cX$ such that $e\in\acl^{\cX|_\APA^\meq}(\bar b)$ and $\bar b\subset\acl^{\cX|_\APA^\meq}(e)$. On the other hand, by the independence hypothesis, $e\in\acl^{\cX^\meq}(B)$. Hence
$$\bar b\subset\acl^{\cX|_\APA^\meq}(e)\cap\cX\subseteq\acl^{\cX^\meq}(B)\cap\cX=\acl^\cX(B),$$
and thus $e\in\acl^{\cX|_\APA^\meq}(\acl^\cX(B))$. It follows that the extensions of $\phi$-types defined by $e$ are non-forking over $\acl^\cX(B)$ \big($\subseteq\acl^\cX(B)\bar c$\big) in the sense of $\cX|_\APA$. Hence $e'=e$, and in particular, $e'\in\acl^{\cX|_\APA^\meq}(\acl^\cX(B))$. We conclude that $\bar a\Findep{\acl^\cX(B)}\bar c$.
\end{proof}

Now recall that an extension $\cZ\subseteq\cX$ of $\Gamma$-systems is \emph{weakly mixing} if the relatively independent self-joining $\cX\otimes_\cZ \cX$ is ergodic.

\begin{theorem}\label{th:main-via-stability}
Let $\cZ\subseteq\cX$ be an extension of $\Gamma$-systems with $\cX$ strongly ergodic. Then the extension
$\acl^\cX(\cZ)\subseteq \cX$
is weakly mixing.
\end{theorem}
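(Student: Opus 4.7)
The plan is to prove the contrapositive: if the extension $\acl^\cX(\cZ)\subseteq\cX$ fails to be weakly mixing, I will construct a non-ergodic elementary extension of $\cX$, contradicting the characterization of strong ergodicity in terms of elementary extensions given by the proposition in Section~\ref{sec:model-theor-fram}.

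Suppose that $\acl^\cX(\cZ)\subseteq\cX$ is not weakly mixing, so the relatively independent self-joining $\cX\otimes_{\acl^\cX(\cZ)}\cX$ is not ergodic; fix a non-trivial $\Gamma$-invariant element $w$ in it. Let $\cX'\succeq\cX$ be a sufficiently saturated elementary extension. Using the existence theorem for non-forking extensions together with saturation and compactness, I would realize inside $\cX'$ a copy $\cX_2$ of $\cX_1:=\cX$ such that
\begin{equation*}
\cX_2\equiv_{\acl^\cX(\cZ)}\cX_1 \qquad \text{and} \qquad \cX_2\findep^{T_\cX}_{\acl^\cX(\cZ)}\cX_1.
\end{equation*}
The equality of types yields a $\Gamma$-equivariant measure-algebra isomorphism $\phi\colon\cX_1\to\cX_2$ fixing $\acl^\cX(\cZ)$ pointwise, while Lemma~\ref{l:Xmu-weak-elimination} converts the stable non-forking independence into measure-theoretic independence $\cX_1\findep_{\acl^\cX(\cZ)}\cX_2$ inside $\cX'$.

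These two facts together identify the factor of $\cX'$ generated by $\cX_1\cup\cX_2$ with the relatively independent joining $\cX\otimes_{\acl^\cX(\cZ)}\cX$ as $\Gamma$-systems. In particular, the non-trivial $\Gamma$-invariant element $w$ of the joining corresponds to a non-trivial $\Gamma$-invariant element of $\cX'$, showing that $\cX'$ is not ergodic. But since $\cX$ is strongly ergodic, every elementary extension of $\cX$ must be ergodic, a contradiction.

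The step I expect to be most delicate is the identification of the factor generated by $\cX_1\cup\cX_2$ inside $\cX'$ with the relatively independent self-joining: this requires combining the $\Gamma$-equivariant isomorphism coming from type equality with the probabilistic independence coming from Lemma~\ref{l:Xmu-weak-elimination}, and verifying that the resulting joint distribution on $\cX_1\otimes\cX_2$ agrees with the relatively independent one. A secondary technical point is applying the existence-of-non-forking-extensions lemma to a ``tuple'' enumerating all of $\cX$, which I would handle by working in a sufficiently saturated $\cX'$ so that the full type of $\cX$ over $\acl^\cX(\cZ)$ can be realized together with the required independence. Once these are settled, the contradiction is immediate from the elementary-extension characterization of strong ergodicity.
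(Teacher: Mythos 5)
Your proposal is correct and follows essentially the same argument as the paper: realize an independent copy of $\cX$ over $\acl^\cX(\cZ)$ inside a saturated elementary extension via non-forking, translate stable independence to probabilistic independence via weak elimination of imaginaries, identify the generated factor with the relatively independent self-joining, and invoke ergodicity of elementary extensions coming from strong ergodicity. The paper runs the implication directly (the joining embeds in the ergodic $\cX'$, hence is ergodic) rather than by contrapositive, but this is only a cosmetic difference.
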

\begin{proof}
We denote $\cB = \acl^\cX(\cZ)$. If $\cX$ is not atomless, then by ergodicity it must be finite and, in that case, $\cB=\cX$; so we may assume $\cX$ is atomless. Let $\cX'$ be a sufficiently saturated elementary extension of $\cX$, which must be ergodic by the assumption of strong ergodicity of $\cX$. Let $\bar a$ be a tuple enumerating the set $\cX$. By Lemma~\ref{l:indep-existence-prop}, we can find a tuple $\bar a'$ in $\cX'$ with $\bar a' \equiv_\cB \bar a$ and $\bar a' \Findep[T^\cX]{\cB} \bar a$. Hence, by Lemma~\ref{l:Xmu-weak-elimination}, $\bar a' \Findep{\cB} \bar a$. The two conditions on $\bar a$ and $\bar a'$ say that the structure $\cW$ generated by $\bar a$ and $\bar a'$ inside $\cX'$ is isomorphic to the relatively independent self-joining $\cX\otimes_\cB \cX$. But then, since $\cW\subseteq\cX'$, we conclude that $\cX\otimes_\cB \cX$ is ergodic as required.
\end{proof}

We point out that Theorem~\ref{th:distal-strongly-erg-acl} is a direct consequence of this result, modulo the following simple remark.

\begin{lemma} Let $\cX$ be an ergodic $\Gamma$-system and let $\cZ\subseteq \cY\subseteq\cX$ be factors. If the extension $\cY\subseteq\cX$ is weakly mixing, then $\cY$ contains the maximal distal extension of $\cZ$ in~$\cX$.
\end{lemma}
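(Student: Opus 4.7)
Let $\cD$ denote the maximal intermediate distal extension of $\cZ$ in $\cX$, which exists by Proposition~\ref{p:factor-generated-by-distal-factors}. My plan is to prove that the factor $\cY \vee \cD \subseteq \cX$ generated by $\cY$ and $\cD$ coincides with $\cY$; equivalently, that $\cD \subseteq \cY$.

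First, I would note that the extension $\cY \subseteq \cY \vee \cD$ is itself distal. This follows by applying Proposition~\ref{p:factor-generated-by-distal-factors} to the family of distal extensions $\{\cY \subseteq \cY, \, \cZ \subseteq \cD\}$: the factor generated by $\cY$ and $\cD$ is a distal extension of the factor generated by $\cY$ and $\cZ$, which is just $\cY$.

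Suppose, toward contradiction, that $\cY \subsetneq \cY \vee \cD$. Unpacking the definition of a distal tower for $\cY \subseteq \cY \vee \cD$, the first nontrivial step of the tower gives a factor $\cY \subsetneq \cY_1 \subseteq \cY \vee \cD \subseteq \cX$ such that $\cY \subseteq \cY_1$ is a nontrivial compact extension. The main point is then to derive a contradiction with the weak mixing of $\cY \subseteq \cX$. For this, take a finitely generated, $\Gamma$-invariant $\cY$-submodule $M$ of $L^2(\cY_1) \ominus L^2(\cY)$ and, via Proposition~\ref{p:exist-basis}, a $\cY$-basis $\{e_0,\dots,e_{n-1}\} \subseteq L^\infty(\cY_1)$ for it. The element
\begin{equation*}
  \xi \coloneqq \sum_{i<n} e_i \otimes \bar e_i \ \in \ L^2(\cX \otimes_\cY \cX)
\end{equation*}
should be nonzero, non-constant and $\Gamma$-invariant, which contradicts ergodicity of $\cX \otimes_\cY \cX$. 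Invariance is the key routine verification: writing $u_\gamma(x) = (\langle \gamma e_i, e_j\rangle_\cY(x))_{ij}$ as in Section~\ref{sec:basic-facts-cpct-exts}, we have $\gamma E = u_\gamma E$ almost surely, and then $\sum_i \gamma e_i \otimes \overline{\gamma e_i}$ rearranges to $\sum_{j,k}(u_\gamma u_\gamma^*)_{jk}\, e_j \otimes \bar e_k$, which by Lemma~\ref{l:os-size} equals $\sum_j \chi_\cY(e_j)\, e_j \otimes \bar e_j = \xi$.

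Hence $\cY \vee \cD = \cY$ and so $\cD \subseteq \cY$. The only nonroutine ingredient is the classical observation, isolated in the previous paragraph, that a relatively weakly mixing extension cannot contain a nontrivial intermediate compact extension; everything else is an immediate application of the structural results already established in Sections~\ref{sec:model-theor-fram} and~\ref{sec:basic-facts-cpct-exts}.
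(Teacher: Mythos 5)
Your proof is correct and follows essentially the same route as the paper: both reduce the problem to the observation that a weakly mixing extension admits no nontrivial finitely generated $\Gamma$-invariant submodule, witnessed by the $\Gamma$-invariant element $\sum_i e_i \otimes \bar e_i$ in the relative self-joining. The only differences are organizational---you take the first compact step of a distal tower for $\cY \subseteq \cY\vee\cD$, whereas the paper argues by induction on the tower of maximal compact extensions of $\cZ$---and you are slightly more explicit in placing the module inside $L^2(\cY_1)\ominus L^2(\cY)$, which is indeed needed for $\xi$ to have zero mean.
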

\begin{proof}
By induction, it suffices to show that $\cY$ contains $\cZ_1$, the maximal compact extension of $\cZ$ in $\cX$. Let $\cW$ be the factor generated by $\cY$ and $\cZ_1$. Then the extension $\cY\subseteq\cW$ is clearly weakly mixing and it is also compact by Proposition~\ref{p:factor-generated-by-compact-factors}. So it is enough to note that a compact, weakly mixing extension must be trivial or, which is the same, that a weakly mixing extension $\cY\subseteq\cW$ does not admit any finitely generated $\Gamma$-invariant submodule. Indeed, if $e_0,\dots,e_{n-1}$ is a $\cY$-basis for a non-trivial invariant $\cY$-module in $L^2(\cW)$, then the function $\sum_{i<n}e_i\otimes \conj{e_i} \in L^2(\cW\otimes_\cY \cW)$ is $\Gamma$-invariant but it is not constant.
\end{proof}

We thus see that one can obtain Theorem~\ref{th:distal-strongly-erg-acl} and its various ergodic-theoretic consequences by almost purely model-theoretic means. We may point out that this is also true of our result about coalescence, Theorem~\ref{th:distal-coalescent}. Indeed, using a somewhat lengthy (but elementary and purely model-theoretic) compactness argument, which we omit, one can improve the conclusion of Theorem~\ref{th:distal-strongly-erg-acl} to say that the maximal distal extension of $\cZ$ in $\cX$ is contained in $\Eacl^\cX(\cZ)$, the \emph{existential algebraic closure} of $\cZ$. The set $\Eacl^\cX(\cZ)$ is the union of all compact subsets $A \sub \cX$ \df{existentially definable} over $\cZ$, i.e., such that the distance function $d(\cdot, A)$ is a uniform limit of $\inf$-formulas with parameters from $\cZ$. (This notion has the feature of inducing a closure operator $\Eacl \colon\mathcal{P}(\cX)\to\mathcal{P}(\cX)$, with properties similar to the ones of the usual algebraic closure.) Since every $\cZ$-embedding $\cX\to\cX$ must restrict to an isometry on every existentially $\cZ$-definable compact set, the condition $\cX=\Eacl^\cX(\cZ)$ readily implies that $\cX$ is coalescent over $\cZ$, and Theorem~\ref{th:distal-coalescent} follows. This alternative route (using compactness to pass from $\acl$ to $\acl_\exists$) was our original approach to the coalescence result, before we realized that we could prove the stronger Theorem~\ref{th:main-lemma}; the proof of the latter requires more ergodic theory, but shows that algebraicity of strongly ergodic distal factors can be witnessed by \emph{quantifier-free} formulas.

%%%%%%%%%%%%%%%%%%%%%%%%%%%%%%%%%%%%%%%%%%%%%%%%%%

\section{Groups with property (T)}
\label{sec:groups-with-property-T}

Next, we consider actions of groups with property (T). Here we will be mostly interested in separable systems because, by Theorem~\ref{th:distal-strongly-erg-acl}, if $\cZ$ is a separable system of a property (T) group (for example, the trivial one), then any distal extension of $\cZ$ is also separable. Thus we will make free use of the classical theory for separable systems, as presented, for example, in \cite{Glasner2003}. Nevertheless, we state Theorem~\ref{th:propT} in full generality and prove it by a reduction to the separable case.

It will be useful to consider the representation of ergodic compact extensions as skew-products. More precisely, recall that if $\cZ \sub \cX$ is an ergodic compact extension with $\cX$ separable, then there exists a compact, metrizable group $K$, a closed subgroup $L \leq K$, and a cocycle $\rho \colon \Gamma \times Z \to K$ such that
\begin{equation*}
  X \cong Z \times_\rho K/L
\end{equation*}
with the action of $\Gamma$ given by $\gamma\cdot (z,kL) = {(\gamma\cdot z,\rho(\gamma,z)kL)}$. Moreover, $K$ can be chosen so that the system $X \cong Z \times_\rho K$ is also an ergodic compact extension of $\cZ$.

Our result about property (T) groups follows from the following more general theorem.
\begin{theorem}
  \label{th:propT}
Let $\cZ$ be a $\Gamma$-system and suppose that $\cD_2(\cZ)$ is strongly ergodic. Then $\cD_2(\cZ) = \cD_1(\cZ)$. In particular, every distal extension of $\cZ$ is a compact extension.
\end{theorem}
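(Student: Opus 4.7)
The plan is to reduce to the separable, ergodic setting and then, assuming for contradiction that $\cD_1(\cZ) \subsetneq \cD_2(\cZ)$, to extract an intermediate ergodic skew-product extension and show that it is already compact over $\cZ$, contradicting the maximality of $\cD_1(\cZ)$.

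First I would reduce to separable systems: since $\cD_2(\cZ)$ is strongly ergodic and distal, it is separable by Corollary~\ref{c:distal-is-in-Xalg}, hence so are $\cZ$ and $\cD_1(\cZ)$, and all three systems are ergodic. In this setting I may freely use the Mackey--Furstenberg--Zimmer skew-product description of ergodic compact extensions recalled at the beginning of this section: any ergodic compact extension of $\cD_1(\cZ)$ takes the form $\cD_1(\cZ) \times_\rho K/L$ for some compact metrizable group $K$, closed $L \leq K$, and cocycle $\rho\colon\Gamma\times Z_1\to K$, and $K$ may be chosen so that $\cD_1(\cZ)\times_\rho K$ is itself an ergodic compact extension.

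Assuming $\cD_1(\cZ) \subsetneq \cD_2(\cZ)$, I pick an intermediate ergodic skew product $\cY = \cD_1(\cZ) \times_\rho K$ with $K$ a nontrivial compact metrizable group and $\cD_1(\cZ) \subsetneq \cY \subseteq \cD_2(\cZ)$. Being a factor of $\cD_2(\cZ)$, the system $\cY$ is strongly ergodic, and it is a distal extension of $\cZ$ of rank at most~$2$. The goal is to show that $\cY$ is in fact a compact extension of $\cZ$. Granted this, the universal property of $\cD_1(\cZ)$ (Theorem~\ref{th:absolute-max-compact}) provides a $\cZ$-embedding $\iota\colon\cY\to\cD_1(\cZ)$; composing with the inclusion $\cD_1(\cZ) \subseteq \cY$ yields a $\cZ$-self-embedding of $\cD_1(\cZ)$, which by coalescence of $\cD_1(\cZ)$ over $\cZ$ must be an isomorphism. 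Consequently the inclusion $\cD_1(\cZ) \subseteq \cY$ is itself an isomorphism, forcing $\cY = \cD_1(\cZ)$, a contradiction.

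The hard part, and where strong ergodicity is used in an essential way, is to prove that $\cY$ is a compact extension of $\cZ$. The strategy is to decompose $L^2(\cY) = \bigoplus_{\pi\in\hat K} M_\pi$ into isotypic $\cD_1(\cZ)$-submodules via the Peter--Weyl decomposition of the fiber $K$; on each finite-rank $\cD_1(\cZ)$-module $M_\pi$, the $\Gamma$-action is governed by the matrix cocycle $\pi\circ\rho$ with values in $L^\infty(\cD_1(\cZ), U(d_\pi))$. By Theorem~\ref{th:distal-strongly-erg-acl} applied inside the strongly ergodic $\cD_2(\cZ)$, the elements of every $M_\pi$ lie in $\acl^{\cD_2(\cZ)}(\cZ)$, and by Theorem~\ref{th:main-lemma} their $\cZ$-quantifier-free type sets are $L^2$-compact. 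This rigidity should force each $\pi\circ\rho$ to be cohomologous to a $\cZ$-measurable cocycle, coherently in $\pi$, yielding a cocycle $\rho'$ cohomologous to $\rho$ that factors through the projection $Z_1\to Z$. Then $\cY$ is identified with the ergodic joining $\cD_1(\cZ)\otimes_\cZ(Z\times_{\rho'}K)$ over $\cZ$, which is a compact extension of $\cZ$ by Proposition~\ref{p:factor-generated-by-compact-factors}. Establishing this cocycle-reduction step is the main technical obstacle I expect to confront.
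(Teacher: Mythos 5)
Your reduction to the separable case is flawed at the outset: Corollary~\ref{c:distal-is-in-Xalg} shows that a strongly ergodic system has \emph{separable maximal distal factor}, but $\cD_2(\cZ)$ is a distal extension of $\cZ$, not in general a distal system (i.e., not distal over the trivial factor). Strong ergodicity of $\cD_2(\cZ)$ does not force $\cZ$ to be separable, so the claim that ``$\cZ$ and $\cD_1(\cZ)$ are separable'' does not follow. The paper handles the general case by first proving the statement for separable $\cZ$ and then reducing via Lemma~\ref{l:transfer-lemma} and Proposition~\ref{p:inclusion-Dalpha}; some such device is needed and must be made explicit.

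The central gap, which you yourself flag, is the cocycle reduction: you need to show that $\rho\colon\Gamma\times Z_1\to K$ is cohomologous to a $\cZ$-measurable cocycle, and no argument is given for why compactness of the sets in Theorem~\ref{th:main-lemma} forces the family $(\pi\circ\rho)_{\pi\in\hat K}$ to reduce \emph{coherently}. Worse, this statement is essentially equivalent to the theorem itself: if $\cD_2(\cZ)=\cD_1(\cZ)$ then one can read off a $\cZ$-measurable cocycle from the Mackey--Zimmer representation, and conversely a $\cZ$-measurable reduction of $\rho$ immediately gives compactness of $\cY$ over $\cZ$ by the product-module argument of Proposition~\ref{p:factor-generated-by-compact-factors}. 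So the proposal, as written, relocates the entire difficulty into an unproven step. The paper takes a different route that avoids cocycle reduction altogether: it sets $K_2=\Aut_\cZ(\cD_2)$, uses Corollary~\ref{c:Aut(distal)-is-compact} to get $K_2$ compact, and the surjectivity of the restriction $\Aut_\cZ(\cD_2)\to\Aut_\cZ(\cD_1)$ (Corollary~\ref{c:autZ-to-authatZ}) together with the Mackey--Zimmer description $\cD_1=Z\times_{\rho_1}K$ and $\cD_1^K=\cZ$ to conclude that $\cD_2^{K_2}=\cZ$. Then $\cZ\subseteq\cD_2$ is a group extension by the compact group $K_2$, and a classical theorem (Glasner, Theorems~3.29 and~9.14) says such extensions are compact. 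If you want to pursue your plan, you should either supply the cocycle reduction argument (which is nontrivial) or switch to the group-extension argument, since the latter packages exactly the rigidity you are trying to extract from the acl-compactness.
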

\begin{proof}
We write $\cD_i$ instead of $\cD_i(\cZ)$. We assume first that $\cZ$ is separable. As noted above, by Theorem~\ref{th:distal-strongly-erg-acl}, both systems $\cD_1$ and $\cD_2$ are separable. We represent $\cD_1$ and $\cZ$ as usual actions on probability spaces, by $\Gamma\actson D_1$ and $\Gamma\actson Z$, respectively.

Hence we can write $D_1 \cong Z\times_{\rho_1} K/L$ for appropriate $K$, $L$ and $\rho_1$ as above. In fact, we must have $L = \set{1_{K}}$, otherwise the skew-product $Z\times_{\rho_1} K$ would be a proper extension of $\cD_1$ which is still an ergodic compact extension of $\cZ$, contradicting the maximality of $\cD_1$. Thus $D_1 \cong Z\times_{\rho_1} K$. Note that $K$ embeds as a subgroup of $\Aut_\cZ(\cD_1)$ by acting on $Z \times_\rho K$ on the right. Moreover, if we denote
\begin{equation*}
  \cD_1^K = \set{a \in \cD_1 : k \cdot a = a \text{ for all } k \in K}, 
\end{equation*}
then, clearly, $\cD_1^K = \cZ$. (We actually have $K=\Aut_\cZ(\cD_1)$, see \cite{Glasner2003}*{Proposition~6.16}.)

Now let $K_2 = \Aut_\cZ(\cD_2)$, and consider the corresponding factor of invariant elements $\cD_2^{K_2} \subseteq \cD_2$. We note that as by Corollary~\ref{c:autZ-to-authatZ}, the restriction map $\Aut_\cZ(\cD_2) \to \Aut_\cZ(\cD_1)$ is surjective, we have $\cD_1 \cap \cD_2^{K_2} = \cD_1^K = \cZ$.
Let $\cC$ be the maximal compact extension of $\cZ$ in $\cD_2^{K_2}$. Then $\cC \sub \cD_1$, for $\cD_1$ is the maximal intermediate compact extension of $\cZ$ within $\cD_2$. Thus $\cC \sub \cZ$. As the extension $\cZ \sub \cD_2^{K_2}$ is distal (as an intermediate extension of the distal extension $\cZ \sub \cD_2$), this implies that $\cZ = \cD_2^{K_2}$.

Finally, by our hypothesis on $\cD_2$ and Corollary~\ref{c:Aut(distal)-is-compact}, the group $K_2$ is compact, i.e., $\cZ = \cD_2^{K_2} \sub \cD_2$ is a \df{group extension}. By \cite{Glasner2003}*{Theorems~3.29 and~9.14}, it must be a compact extension, so $\cD_2 = \cD_1$. We conclude that $\cD_\alpha = \cD_1$ for all $\alpha$. By Theorem~\ref{th:universal-distal-systems}, this means that every distal extension of $\cZ$ is a factor of $\cD_1$, and thus a compact extension of $\cZ$ as per Lemma~\ref{l:factor-of-compact-is-compact}. 

Now suppose $\cZ$ is an arbitrary ergodic system with $\cD_2(\cZ)$ strongly ergodic. Let $\cY\subseteq \cD_2$ be any separable factor. By Lemma~\ref{l:transfer-lemma}, there are separable factors $\cZ'\subseteq \cY'\subseteq \cD_2$ with $\cZ'\sub \cZ$ and $\cY\subseteq\cY'$ such that the extension $\cZ'\subseteq\cY'$ is distal. As $\cD_2(\cZ')$ is a factor of $\cD_2(\cZ)$ (Proposition~\ref{p:inclusion-Dalpha}), it is also strongly ergodic and by what we have just shown in the separable case, the extension $\cZ' \sub \cY'$ is compact. Hence the system generated by $\cY$ and $\cZ$ is a compact extension of $\cZ$, and by Proposition~\ref{p:factor-generated-by-compact-factors}, we can conclude that so is $\cD_2$. So again $\cD_2 = \cD_1$ and we deduce as before that every ergodic distal extension of $\cZ$ is actually a compact extension.
\end{proof}

\begin{cor}
  \label{c:T-distal-implies-compact}
  Suppose that $\Gamma$ has property (T). Then the following hold:
  \begin{enumerate}
  \item If $\cZ$ is any ergodic $\Gamma$-system, then every distal extension of $\cZ$ is a compact extension.
  \item \label{i:cTdic:2} Every ergodic, distal $\Gamma$-system is compact.
  \end{enumerate}
\end{cor}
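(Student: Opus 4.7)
The plan is to derive both statements as immediate consequences of Theorem~\ref{th:propT} together with the defining property of property~(T) groups.

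For part~(i), I would start by recalling that by the definition of property~(T) adopted in the paper (see the discussion before Proposition~\ref{p:T-definable}), every ergodic pmp $\Gamma$-system is strongly ergodic. In particular, given any ergodic $\cZ$, the universal system $\cD_2(\cZ)$ is ergodic (since it is obtained as an ergodic extension of an ergodic system via the constructions of Section~\ref{sec:universal-distal-systems}) and hence strongly ergodic. Theorem~\ref{th:propT} then applies and yields $\cD_2(\cZ)=\cD_1(\cZ)$, from which the same theorem concludes that every distal extension of $\cZ$ is compact.

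For part~(ii), I would simply specialize part~(i) to the case where $\cZ$ is the trivial one-point $\Gamma$-system. Since ergodic distal $\Gamma$-systems are precisely the distal extensions of the trivial system with $\cX$ ergodic, applying part~(i) to $\cZ=\mathbf{1}$ gives that every such $\cX$ is a compact extension of the trivial system, i.e., a compact system.

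There is essentially no obstacle here, as all the hard work has been done in Theorem~\ref{th:propT}; the only point to verify carefully is that $\cD_2(\cZ)$ is ergodic whenever $\cZ$ is, so that the property~(T) hypothesis can be translated into strong ergodicity of $\cD_2(\cZ)$. This is built into the construction of the universal tower in Section~\ref{sec:universal-distal-systems} (cf.\ Theorem~\ref{th:absolute-max-compact} and Lemma~\ref{l:compact-ext-prop}), so the corollary is essentially a one-line deduction from Theorem~\ref{th:propT}.
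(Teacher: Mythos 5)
Your deduction is correct and is precisely the intended one: property (T) means every ergodic $\Gamma$-system is strongly ergodic, the construction of Section~\ref{sec:universal-distal-systems} guarantees $\cD_2(\cZ)$ is ergodic, hence strongly ergodic, and Theorem~\ref{th:propT} does the rest; part~(ii) is the case $\cZ=\mathbf{1}$. This matches the paper, which states the corollary without further proof (though it also remarks that part~(ii) alone can alternatively be obtained from Corollaries~\ref{c:from-Veech} and~\ref{c:distal-is-in-Xalg}).
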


If one is just interested in item \ref{i:cTdic:2} of the last corollary, one can also obtain it as a consequence of Corollaries \ref{c:from-Veech} and \ref{c:distal-is-in-Xalg}.

A variant of the previous argument in the case $\cZ=1$ (see below) shows that the conclusion $\cD_2(\Gamma)=\cD_1(\Gamma)$ can be obtained by assuming only that $\Aut(\cD_2(\Gamma))$ is compact. On the other hand, note that in the exact sequence
\begin{equation}
  \label{eq:exact-sequence}
  1 \to \Aut_{\cD_1(\Gamma)}(\cD_2(\Gamma)) \to \Aut(\cD_2(\Gamma)) \to \Aut(\cD_1(\Gamma)) \to 1
\end{equation}
both groups to the sides of $\Aut(\cD_2(\Gamma))$ are always compact. If $\Aut(\cD_2(\Gamma))$ is Polish, this implies that $\Aut(\cD_2(\Gamma))$ is also compact. Therefore we obtain the following strengthening of Corollary~\ref{c:from-Veech}.

\begin{theorem}\label{th:polish-AutD2}
Suppose the group $\Aut(\cD_2(\Gamma))$ is either compact or Polish. Then, $\cD_2(\Gamma)=\cD_1(\Gamma)$.
\end{theorem}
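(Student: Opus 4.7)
The plan is to split the argument into two parts: (1) show that the Polish hypothesis on $\Aut(\cD_2(\Gamma))$ implies compactness, via the exact sequence \eqref{eq:exact-sequence}, and (2) under the compactness hypothesis, run a variant of the proof of Theorem~\ref{th:propT} with $\cZ = 1$. The key observation is that in Theorem~\ref{th:propT}, the assumption that $\cD_2(\cZ)$ is strongly ergodic was used only to guarantee, via Corollary~\ref{c:Aut(distal)-is-compact}, that $\Aut_\cZ(\cD_2(\cZ))$ is compact; here we instead assume this compactness directly.

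For the reduction, I would first confirm that the kernel and cokernel in the exact sequence are both compact. The kernel $\Aut_{\cD_1(\Gamma)}(\cD_2(\Gamma))$ is compact by Proposition~\ref{p:compact-aut-group}, since by construction $\cD_1(\Gamma) \sub \cD_2(\Gamma) = \widehat{\cD_1(\Gamma)}$ is a compact ergodic extension. The cokernel $\Aut(\cD_1(\Gamma))$ is compact because $\cD_1(\Gamma)$ is the Bohr compactification of $\Gamma$, so $\Aut(\cD_1(\Gamma))$ coincides with the Bohr compactification group acting on itself by right translations. Exactness on the right is Corollary~\ref{c:autZ-to-authatZ}. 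Assuming now that $H \coloneqq \Aut(\cD_2(\Gamma))$ is Polish, the open mapping theorem for Polish groups implies that the restriction map $H \to \Aut(\cD_1(\Gamma))$ is open, so the quotient $H/\Aut_{\cD_1(\Gamma)}(\cD_2(\Gamma))$ is homeomorphic to the compact group $\Aut(\cD_1(\Gamma))$; combined with the properness of the quotient map by a compact normal subgroup, this forces $H$ to be compact.

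For the main step, assume $K_2 \coloneqq \Aut(\cD_2(\Gamma))$ is compact and consider the invariant factor $\cD_2(\Gamma)^{K_2}$. Since the restriction map $\Aut(\cD_2(\Gamma)) \to \Aut(\cD_1(\Gamma))$ is surjective and $\Aut(\cD_1(\Gamma))$ acts transitively on $\cD_1(\Gamma)$ by right translations (so $\cD_1(\Gamma)^{\Aut(\cD_1(\Gamma))} = 1$), one obtains $\cD_1(\Gamma) \cap \cD_2(\Gamma)^{K_2} = 1$. The maximal compact factor $\cC$ of $\cD_2(\Gamma)^{K_2}$ is also a compact factor of $\cD_2(\Gamma)$, hence contained in $\cD_1(\Gamma)$ by universality, and it is contained in $\cD_2(\Gamma)^{K_2}$, so $\cC = 1$. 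Since $\cD_2(\Gamma)^{K_2}$ is a distal extension of $1$ (Proposition~\ref{p:factor-of-distal-is-distal}), the triviality of its maximal compact factor forces $\cD_2(\Gamma)^{K_2} = 1$. Thus $1 \sub \cD_2(\Gamma)$ is a group extension with compact structure group $K_2$; by \cite{Glasner2003}*{Theorems~3.29 and~9.14} (in the separable case, extended to the general case by Lemma~\ref{l:transfer-lemma} as in the proof of Theorem~\ref{th:propT}), this is a compact extension, so $\cD_2(\Gamma)$ is compact and hence equals $\cD_1(\Gamma)$ by universality.

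The main obstacle is the topological reduction in the first step. The subtlety is that $\Aut(\cD_1(\Gamma))$ need not be metrizable (for instance, for $\Gamma = \Z$ the Bohr compactification is not second countable), so applying the open mapping theorem and identifying the quotient $H/\Aut_{\cD_1(\Gamma)}(\cD_2(\Gamma))$ with the topological group $\Aut(\cD_1(\Gamma))$ requires some care; one must use that $H$ is separable and that the natural bijection commutes with the group structure, combined with general results on topological group extensions by a compact normal subgroup. Once this is granted, the rest of the argument follows the template of Theorem~\ref{th:propT} essentially verbatim.
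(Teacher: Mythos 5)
Your reduction of the Polish case to the compact case via the exact sequence \eqref{eq:exact-sequence} and Hewitt--Ross matches the paper, and so does your argument that $\cD_2(\Gamma)^{K_2}=1$ (i.e., that the $K_2$-action on $\cD_2(\Gamma)$ is ergodic). The divergence — and the gap — is in the final step.

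You conclude by saying that $1\sub\cD_2(\Gamma)$ is a group extension with compact structure group $K_2$, and then invoke \cite{Glasner2003}*{Theorems~3.29 and~9.14}, ``extended to the general case by Lemma~\ref{l:transfer-lemma} as in the proof of Theorem~\ref{th:propT}.'' This transfer is not as in Theorem~\ref{th:propT}, and as stated it does not go through. In Theorem~\ref{th:propT} the relevant hypothesis is strong ergodicity of $\cD_2(\cZ)$, which passes to $\cD_2(\cZ')$ for the separable $\cZ'\sub\cZ$ produced by Lemma~\ref{l:transfer-lemma} (via Proposition~\ref{p:inclusion-Dalpha}); that is exactly why the separable statement can be applied. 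Here the hypothesis is that $K_2=\Aut(\cD_2(\Gamma))$ is a compact group acting ergodically on the possibly non-separable $\cD_2(\Gamma)$, and there is no obvious way to hand this condition down to a separable factor: Lemma~\ref{l:transfer-lemma} produces a separable distal factor $\cY'$, but $K_2$ need not act on $\cY'$, and if $K_2$ is compact but not metrizable the $K_2$-orbit of an element of $\cD_2(\Gamma)$ need not even be separable, so one cannot in general enlarge $\cY'$ to a $K_2$-invariant separable factor. The paper avoids the issue entirely: once $K_2$ is compact and acts ergodically on $\cD_2(\Gamma)$, it runs the Peter--Weyl argument from the proof of Corollary~\ref{c:Robin} directly: decompose $L^2(\cD_2(\Gamma))$ into $K_2$-isotypic components, use Lemma~\ref{l:dim-at-most-n} together with the ergodicity of $K_2\actson\cD_2(\Gamma)$ to see that each isotypic component is finite-dimensional, note these components are $\Gamma$-invariant since $\Gamma$ commutes with $K_2$, and conclude that $\cD_2(\Gamma)$ is a compact system. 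This argument is self-contained, requires no separability, and is in fact the non-separable substitute for the Glasner ``group extension $\Rightarrow$ compact extension'' theorems you wanted to cite. If you replace your last step with this argument, the proof is complete.

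A minor remark on your topological worry in the reduction: once you know $K_2$ is Polish, $K_2/\Aut_{\cD_1(\Gamma)}(\cD_2(\Gamma))$ is Polish, and the induced continuous bijection onto $K_1=\Aut(\cD_1(\Gamma))$ has a Baire (indeed compact Hausdorff) codomain, so the Pettis open-mapping theorem applies and $K_1$ is forced to be Polish after all; the potential non-metrizability of the Bohr compactification is therefore not an obstacle under the hypothesis of the theorem.
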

\begin{proof}
We write $\cD_i$ instead of $\cD_i(\Gamma)$ and $K_i$ instead of $\Aut(\cD_i)$. In either case of the statement we have that $K_2$ is compact. Indeed, $K_1$ is clearly compact and $\Aut_{\cD_1}(\cD_2)$ is compact by Proposition~\ref{p:compact-aut-group}. If $K_2$ is Polish, then the map $K_2 \to K_1$ in \eqref{eq:exact-sequence} must be open and we can apply \cite{Hewitt1979}*{Theorem~5.25} to conclude that $K_2$ is compact.

Now, as in the proof of Theorem~\ref{th:propT}, the surjectivity of the restriction map $K_2\to K_1$ gives $\cD_1\cap\cD_2^{K_2}=\cD_1^{K_1}=1$, hence the factor of invariant elements $\cD_2^{K_2}$ must be trivial. In other words, the action $K_2\actson \cD_2$ is ergodic. As in the proof of Corollary~\ref{c:Robin}, this implies that $\cD_2$ is compact, as required.
\end{proof}

\begin{remark}
  \label{rem:D1-separable}
  In view of Theorem~\ref{th:polish-AutD2}, one might ask whether it is enough to assume that $\cD_1(\Gamma)$ is separable in order to conclude that $\cD_2(\Gamma) = \cD_1(\Gamma)$. The answer is negative, as shown by the following example. Let $\Gamma$ be the Grigorchuk group (see \cite{Harpe2000} for details on this group). First, any unitary representation of $\Gamma$ factors through a finite group \cite{Harpe2000}*{p.~224}. On the other hand, $\Gamma$ is residually finite and finitely generated, so its profinite completion, which by the previous remark is equal to its Bohr compactification, is an infinite, metrizable, compact group. In other words, the probability space of $\cD_1(\Gamma)$ is standard and non-atomic. The group $\Gamma$ is amenable, so by \cite{Ornstein1980}, the action $\Gamma \actson \cD_1(\Gamma)$ is orbit equivalent to an action of $\Z$, and thus, by Zimmer~\cite{Zimmer1977}, there exist non-trivial cocycles $\Gamma \times \cD_1(\Gamma) \to K$ for any compact, metrizable $K$. Hence $\cD_1(\Gamma) \subsetneq \cD_2(\Gamma)$
\end{remark}

\appendix

\section{Distal rank and factors}
\label{sec:appendix}

It is a folklore result that the distal rank of a factor cannot be larger than the distal rank of the original system, at least for actions of $\Z$. We have used this to deduce Theorem~\ref{th:rank-of-Domega1Z}. Since we could not find a proof in the literature, we include one here. We thank Eli Glasner, Matt Foreman and Jean-Paul Thouvenot for some helpful hints.

We use again the convention that if $\cX$ and $\cY$ are factors of a system $\cW$, then $\cX\cY$ denotes the factor generated by them inside $\cW$. We also recall that we write $\cX\Findep{\cZ} \cY$ to say that $\cX$ and $\cY$ are relatively independent over $\cZ$.

\begin{prop}\label{p:Furst:max-comp-factor-of-a-product}
Let $\cX$ and $\cY$ be separable $\Gamma$-systems inside some larger system, with a common factor $\cZ$. Assume $\cX$ is ergodic and $\cX\Findep{\cZ} \cY$. Let $\cY_1$ be the maximal compact extension of $\cZ$ in $\cY$. Then $\cX\cY_1$ is the maximal compact extension of $\cX$ inside $\cX\cY$.
\end{prop}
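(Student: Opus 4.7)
Plan: The aim is to identify the maximal compact extension of $\cX$ inside $\cX\cY$ with $\cX\cY_1$. Both inclusions exploit the following consequence of the relative independence $\cX\findep_\cZ\cY$: for any $g, g' \in L^\infty(\cY)$, one has $\E(g\bar{g'}|\cX) = \E(g\bar{g'}|\cZ) \in L^\infty(\cZ)$.

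For the inclusion $\cX\cY_1 \subseteq $ (maximal compact extension of $\cX$ in $\cX\cY$), I would show that $\cX \subseteq \cX\cY_1$ is itself a compact extension. By Proposition~\ref{p:factor-generated-by-compact-factors}, it suffices to exhibit a generating family of finitely generated $\Gamma$-invariant $\cX$-submodules of $L^2(\cX\cY_1)$. Given any finitely generated $\Gamma$-invariant $\cZ$-submodule $N \subseteq L^2(\cY_1)$ with $\cZ$-basis $e_1, \ldots, e_n \in L^\infty(\cY_1)$ (produced by Proposition~\ref{p:exist-basis} after reducing to an ergodic component of $\cY_1$), the displayed identity yields $\langle e_i, e_j\rangle_\cX = \langle e_i, e_j\rangle_\cZ$, so the same tuple is $\cX$-orthonormal and spans a finitely generated $\Gamma$-invariant $\cX$-submodule of $L^2(\cX\cY)$ --- the matrix coefficients for the $\Gamma$-action remain the $\cZ$-measurable ones coming from $N$. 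Varying over such $N$, these $\cX$-modules generate $L^2(\cX\cY_1)$.

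For the reverse inclusion, let $P$ denote the orthogonal projection of $L^2(\cX\cY)$ onto $L^2(\cX\cY_1)$. Since $L^2(\cX\cY_1)$ is $\Gamma$-invariant and closed under multiplication by $L^\infty(\cX)$, $P$ is $\Gamma$-equivariant and $L^\infty(\cX)$-linear. Hence for any finitely generated $\Gamma$-invariant $\cX$-submodule $M \subseteq L^2(\cX\cY)$, the image $M' := (I - P) M$ is again such a submodule, now lying in $L^2(\cX\cY) \ominus L^2(\cX\cY_1)$; the goal reduces to $M' = 0$. The strategy is to extract from a hypothetically non-zero $M'$ a non-zero finitely generated $\Gamma$-invariant $\cZ$-submodule of $L^2(\cY) \ominus L^2(\cY_1)$, contradicting the maximality of $\cY_1$ as the compact extension of $\cZ$ in $\cY$.

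The main obstacle is this extraction step. Taking an $\cX$-basis $e_1, \ldots, e_n \in L^\infty(\cX\cY)$ of $M'$ (first reducing to an ergodic joining via Proposition~\ref{p:ergodic-joinings} so as to apply Proposition~\ref{p:exist-basis}), the natural candidate is the $\cZ$-module generated by the $\cY$-projections $v_{ij} := \E(e_i \bar{e_j} | \cY) \in L^\infty(\cY)$. The subtlety is that the $\Gamma$-action on the $e_i$'s is encoded by matrix coefficients $u_\gamma(i,j) = \langle \gamma e_i, e_j\rangle_\cX \in L^\infty(\cX)$ that are not a priori $\cZ$-measurable, so establishing $\Gamma$-invariance of this $\cZ$-module requires showing --- again via $\cX\findep_\cZ\cY$ --- that when transported down to $\cY$, only the $\cZ$-projection $\E(u_\gamma | \cZ) \in L^\infty(\cZ)^{n \times n}$ effectively acts. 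Alternatively, one can bypass this delicate matrix-coefficient bookkeeping by disintegrating everything over $\cZ$: thanks to $\cX \findep_\cZ \cY$, $\cX\cY$ disintegrates over $\cX$ as $\cY_{\pi_\cZ(x)}$ on the fiber over $x$, and any compact extension of $\cX$ must come fiberwise from a compact extension of $\cZ$ inside $\cY$, which by definition of $\cY_1$ is contained in $\cY_1$.
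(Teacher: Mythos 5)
The paper's proof of this proposition is simply a citation to Furstenberg~\cite{Furstenberg1977}*{Theorem~7.4} and Glasner~\cite{Glasner2003}*{Theorem~9.21}, so your attempt to supply an actual argument is a genuinely different route. The easy inclusion --- that $\cX \sub \cX\cY_1$ is a compact extension --- is essentially fine: relative independence gives $\ip{e_i,e_j}_\cX=\ip{e_i,e_j}_\cZ$ and $\ip{\gamma e_i,e_j}_\cX=\ip{\gamma e_i,e_j}_\cZ$ for a $\cZ$-basis of a finitely generated $\Gamma$-invariant $\cZ$-module in $L^2(\cY_1)$, so the same tuple spans a finitely generated $\Gamma$-invariant $\cX$-module, and these generate $L^2(\cX\cY_1)$ over $\cX$. (One could also just cite Proposition~\ref{p:factor-generated-by-compact-factors}.)

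The reverse inclusion is where the theorem lives, and your sketch does not close the gap you yourself identify. In the first variant, if $e_1,\dots,e_n$ is an $\cX$-basis of $M'$ and $u_\gamma(i,k)=\ip{\gamma e_i,e_k}_\cX$, then
\begin{equation*}
\gamma\cdot\E(e_i\bar e_j\mid\cY) \;=\; \sum_{k,l}\E\big(u_\gamma(i,k)\,\overline{u_\gamma(j,l)}\,e_k\bar e_l \,\big|\,\cY\big),
\end{equation*}
and you cannot pull the $u_\gamma$'s out of the conditional expectation: the products $e_k\bar e_l$ are $\cX\cY$-measurable rather than $\cY$-measurable, so the independence $\cX\findep_\cZ\cY$ does not apply. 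Nor is it established that the resulting $\cZ$-module of the $v_{ij}$ is non-zero, nor that it is orthogonal to $L^2(\cY_1)$ --- and the latter is exactly what would need proving. Your second variant, that ``any compact extension of $\cX$ must come fiberwise from a compact extension of $\cZ$ inside $\cY$,'' is the content of the proposition restated in disintegrated language, not an argument for it: the substantive step in Furstenberg's proof is precisely passing from conditional almost periodicity of a function in $L^2(\cX\cY)$ over $\cX$ to conditional almost periodicity over $\cZ$ in the $\cY$-direction, and that is a genuine compactness argument on conditional Hilbert spaces that does not follow formally from relative independence. So as written your proposal proves only the easy inclusion; the extraction step you flag is the heart of the proposition and remains open.
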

\begin{proof}
For $\Z$-actions this is exactly Theorem~7.4 in Furstenberg's article \cite{Furstenberg1977}, and its proof also works for actions of arbitrary countable groups. See also Glasner's book~\cite{Glasner2003} (Chapter~9, Sections 3 and 4, and particularly Theorem~9.21), where a variant of this theorem is proved for arbitrary $\Gamma$-actions along the same lines as in \cite{Furstenberg1977} (this variant corresponds precisely to Theorem~7.1 in~\cite{Furstenberg1977}).
\end{proof}

\begin{prop}\label{p:rank-of-factors}
  Suppose $\Gamma$ is a countable group and $\cZ\subseteq \cY\subseteq \cX$ are $\Gamma$-systems with $\cX$ ergodic and the extensions $\cZ \sub \cY$ and $\cY \sub \cX$ distal. Then $\rk(\cY/\cZ)\leq \rk(\cX/\cZ)$.
\end{prop}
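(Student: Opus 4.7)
The plan is to reduce to the separable case via Lemma~\ref{l:transfer-lemma} and Proposition~\ref{p:factor-generated-by-distal-factors}, and to handle the separable case by transfinite induction on $\gamma := \rk(\cX/\cZ)$, with Proposition~\ref{p:Furst:max-comp-factor-of-a-product} providing the engine of the successor step.

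For the reduction, assume the statement holds whenever all three systems are separable. Given any separable sub-factor $\cY_0 \sub \cY$, Lemma~\ref{l:transfer-lemma} applied to $\cZ \sub \cX$ with separable sub-factor $\cY_0$ produces separable factors $\cZ' \sub \cY' \sub \cX$ with $\cZ' \sub \cZ$, $\cY_0 \sub \cY'$, the extension $\cZ' \sub \cY'$ distal, and $\rk(\cY'/\cZ') \leq \gamma$. By Proposition~\ref{p:factor-of-distal-is-distal}, the intermediate extension $\cZ' \sub \cY_0$ is distal; hence the separable instance of the proposition applied to $\cZ' \sub \cY_0 \sub \cY'$ yields $\rk(\cY_0/\cZ') \leq \gamma$. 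Apply Proposition~\ref{p:factor-generated-by-distal-factors} to the family of such pairs $(\cZ', \cY_0)$, augmented by the trivial pairs $(\cZ_0, \cZ_0)$ for every separable $\cZ_0 \sub \cZ$ (so that the generated bottom factor exhausts $\cZ$). The conclusion is $\rk(\cY/\cZ) \leq \gamma$.

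For the separable case, fix a distal tower $(\cX_\beta)_{\beta \leq \gamma}$ for $\cZ \sub \cX$ and set $\cY_\beta := \cY \cap \cX_\beta$. Since $\cY_0 = \cZ$ and $\cY_\gamma = \cY$, it suffices to prove by transfinite induction that $(\cY_\beta)$ is a distal tower for $\cZ \sub \cY$. The successor step is the crux: assuming $\cX_\beta \sub \cX_{\beta+1}$ is compact, I apply Proposition~\ref{p:Furst:max-comp-factor-of-a-product} to the two systems $\cX_\beta$ and $\cY_{\beta+1}$ sharing the common factor $\cY_\beta$, inside their relatively independent joining over $\cY_\beta$. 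This identifies the maximal compact extension of $\cX_\beta$ in the joining as $\cX_\beta \vee \cY_{\beta+1}^c$, where $\cY_{\beta+1}^c$ is the maximal compact extension of $\cY_\beta$ inside $\cY_{\beta+1}$. Combining this with the compactness of $\cX_\beta \sub \cX_\beta \vee \cY_{\beta+1}$ inside $\cX_{\beta+1}$ (Lemma~\ref{l:factor-of-compact-is-compact}) and the concrete skew-product realization of separable compact extensions forces $\cY_{\beta+1}^c = \cY_{\beta+1}$, i.e., $\cY_{\beta+1}$ is compact over $\cY_\beta$. At a limit $\lambda$, the inclusion $\bigvee_{\beta < \lambda} \cY_\beta \sub \cY_\lambda$ is trivial; the reverse follows by martingale convergence along the increasing filtration $(\cY \cap \cX_\beta)_{\beta < \lambda}$, once a commutation property for the conditional expectations $\mbE[\cdot \mid \cX_\beta]$ and $\mbE[\cdot \mid \cY]$ is secured by the skew-product picture of each compact step already established.

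The main obstacle is the successor step. The relatively independent joining of $\cX_\beta$ and $\cY_{\beta+1}$ over $\cY_\beta$, to which Proposition~\ref{p:Furst:max-comp-factor-of-a-product} applies directly, is in general a different coupling from the one realized inside $\cX_{\beta+1}$; to extract the desired conclusion about the abstract extension $\cY_\beta \sub \cY_{\beta+1}$ one must carefully exploit the explicit cocycle description of the compact extension $\cX_\beta \sub \cX_{\beta+1}$, recognizing $\cY_{\beta+1}$ as the algebra of $L$-invariants for some closed subgroup $L$ of the compact fiber group. This is precisely the argument carried out by Furstenberg for $\Z$-actions and generalized by Glasner for arbitrary countable $\Gamma$.
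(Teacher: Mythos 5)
Your reduction to the separable case is sound in outline (one would take $\cY_0''=\cZ'\cY_0$ to ensure the bottom factor really sits inside the middle one before invoking the separable instance, but that is a cosmetic fix), and it agrees in spirit with the paper's reduction, which cites the proof of Proposition~\ref{p:factor-of-distal-is-distal}. The problem lies in the separable case, and you have correctly located it yourself without actually filling it.

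Concretely: in the successor step you form the \emph{abstract} relatively independent joining $\cJ$ of $\cX_\beta$ and $\cY_{\beta+1}$ over $\cY_\beta=\cY\cap\cX_\beta$, and Proposition~\ref{p:Furst:max-comp-factor-of-a-product} tells you that the maximal compact extension of $\cX_\beta$ inside $\cJ$ is $\cX_\beta\cY_{\beta+1}^c$. To conclude $\cY_{\beta+1}^c=\cY_{\beta+1}$ you would need to know that the whole of $\cJ$ is a compact extension of $\cX_\beta$. What you actually know is that $\cX_\beta\subseteq\cX_\beta\cY_{\beta+1}$ is compact \emph{inside $\cX$} (being intermediate in $\cX_\beta\subseteq\cX_{\beta+1}$), but that coupling of $\cX_\beta$ and $\cY_{\beta+1}$ need not coincide with the one in $\cJ$ unless you already have $\cX_\beta\findep_{\cY_\beta}\cY_{\beta+1}$ inside $\cX$ — and that relative independence is precisely what is missing. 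Appealing to the skew-product picture does not rescue this: Furstenberg's Theorem~7.4 (our Proposition~\ref{p:Furst:max-comp-factor-of-a-product}) takes the relative independence as a \emph{hypothesis}, it does not supply it; and there is no a priori reason $\cY_{\beta+1}$ appears as the $L$-invariant algebra for a closed subgroup $L$ of the fiber group, since $\cY_{\beta+1}$ need not contain $\cX_\beta$. The same missing independence is what you would need to justify the ``commutation of conditional expectations'' at limit stages.

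The paper resolves exactly this by changing the inductive statement. Rather than trying to show directly that $(\cY\cap\cX_\eta)$ is a distal tower, it defines $\cY_\eta$ to be the maximal distal extension of $\cZ$ of rank at most $\eta$ inside $\cY$ and proves by induction the relative independence $\cX_\eta\findep_{\cY_\eta}\cY$ inside $\cX$. With this invariant in hand, Proposition~\ref{p:Furst:max-comp-factor-of-a-product} can be applied \emph{within $\cX$} at the successor step, and the limit step is handled by a short martingale computation. The conclusion $\rk(\cY/\cZ)\leq\alpha$ then falls out from $\cX\findep_{\cY_\alpha}\cY$, which forces $\cY_\alpha=\cY$. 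So your choice of ``$(\cY\cap\cX_\beta)$ is a distal tower'' as the inductive target is not wrong (a posteriori it is equivalent), but it loses the relative-independence bookkeeping that is the actual engine of the proof, and no amount of cocycle unwinding substitutes for it.
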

\begin{proof}
If this holds in the separable case, then the general case can be deduced from it exactly as in the proof of Proposition~\ref{p:factor-of-distal-is-distal} (note that the bounds for the distal rank that appear in Lemma~\ref{l:transfer-lemma} and Proposition~\ref{p:factor-generated-by-distal-factors} then become relevant).

So we assume $\cX$ is separable. For each ordinal $\eta$, let $\cX_\eta$ and $\cY_\eta$ be the maximal distal intermediate extensions of $\cZ$ of rank at most $\eta$ inside $\cX$ and $\cY$, respectively. If $\alpha=\rk(\cX/\cZ)$ and $\beta=\rk(\cY/\cZ)$, then $(\cX_\eta)_{\eta\leq\alpha}$ and $(\cY_\eta)_{\eta\leq\beta}$ are the corresponding distal towers for $\cX$ and $\cY$ over $\cZ$ and each $\cX_{\eta+1}$ is the maximal compact extension of $\cX_\eta$ inside $\cX$, and similarly for $\cY_{\eta+1}$.

We observe first that by the maximality of the $\cX_\eta$, we have $\cY_\eta\subseteq \cX_\eta$ for every~$\eta$. Next we show by induction that $\cX_\eta \Findep{\cY_\eta} \cY$ for each $\eta$. Thus, in particular, $\cX\Findep{\cY_\alpha} \cY$, which implies $\cY_\alpha=\cY$ and hence $\beta\leq \alpha$, as required.

For $\eta=0$, the claim becomes $\cZ\Findep{\cZ} \cY$ and holds trivially. Suppose inductively that $\cX_\eta \Findep{\cY_\eta} \cY$. By Proposition~\ref{p:Furst:max-comp-factor-of-a-product}, $\cX_\eta\cY_{\eta+1}$ is the maximal compact extension of $\cX_\eta$ inside $\cX_\eta\cY$. On the other hand, the projection to $\cX_\eta\cY$ of any finitely generated $\Gamma$-invariant module over $\cX_\eta$ is again a finitely generated $\Gamma$-invariant module over $\cX_\eta$. It follows that the projection of $L^2(\cX_{\eta+1})$ to $L^2(\cX_\eta\cY)$ is contained in $L^2(\cX_\eta\cY_{\eta+1})$ or, in other words, that
$$\cX_{\eta+1} \Findep{\cX_\eta \cY_{\eta+1}}  \cX_\eta\cY.$$
This together with the induction hypothesis implies that $\cX_{\eta+1} \Findep{\cY_{\eta+1}} \cY$. Finally, suppose $\eta$ is a limit ordinal and $\cX_\xi \Findep{\cY_\xi} \cY$ for every $\xi<\eta$. We can write any $f\in L^2(\cX_\eta)$ as $f=\sum_{\xi<\eta}f_\xi$ for certain $f_\xi\in L^2(\cX_\xi)$. Hence,
$$\E(f|\cY)=\sum_{\xi<\eta} \E(f_\xi|\cY)= \sum_{\xi<\eta} \E(f_\xi|\cY_\xi) \in L^2(\cY_\eta).$$
This shows that $\cX_\eta \Findep{\cY_\eta} \cY$ and concludes the proof.
\end{proof}

\bibliography{st-distal}
\end{document}